\setlist{labelindent=1pt,itemsep=.5em}
\setlist[itemize]{leftmargin=1.2cm}
\setlist[enumerate]{itemindent=0em,leftmargin=1.2cm}
\setlist[enumerate,1]{label={\upshape(\roman*)}}
\newcommand{\email}[1]{%
    \normalsize\href{mailto:#1}{\color{black}{#1} }}
\newcommand{\subjclass}[2][2020]{%
  \let\@oldtitle\@title%
  \gdef\@title{\@oldtitle\footnotetext{#1 \emph{Mathematics subject classification}: #2}}%
}
\newcommand{\keywords}[1]{%
  \let\@@oldtitle\@title%
  \gdef\@title{\@@oldtitle\footnotetext{\emph{Keywords}: #1}}%
}
\newtheorem{thm}{Theorem}[section]
\newtheorem{cor}[thm]{Corollary}
\newtheorem{lem}[thm]{Lemma}
\newtheorem{prop}[thm]{Proposition}
\theoremstyle{definition}
\newtheorem{defn}[thm]{Definition}
\theoremstyle{remark}
\newtheorem{rmk}[thm]{Remark}
\theoremstyle{remark}
\newtheorem{ex}[thm]{Example}
\newtheorem{exes}[thm]{Examples}
\numberwithin{equation}{section}
\title{Admissible Hom-Novikov-Poisson and Hom-Gelfand-Dorfman color Hom-algebras}
\author[1,2]{Ismail Laraiedh \thanks{\emph{Corresponding author}: Ismail Laraiedh, ismail.laraiedh@gmail.com}}
\author[3]{Sergei Silvestrov}
\affil[1]{\Affilfont Departement of Mathematics, Faculty of Sciences,
\authorcr \Affilfont Sfax University, Box 1171, 3000 Sfax, Tunisia
\authorcr \Affilfont
\email{ismail.laraiedh@gmail.com}}
\affil[2]{Departement of Mathematics,
\authorcr \Affilfont College of Sciences and Humanities Al Quwaiiyah,
\authorcr \Affilfont Shaqra University, Kingdom of Saudi Arabia
\authorcr \Affilfont
\email{ismail.laraiedh@su.edu.sa}}
\affil[3]{\Affilfont Division of Mathematics and Physics,
\authorcr \Affilfont School of Education, Culture and Communication,
\authorcr \Affilfont M\"{a}lardalen University, Box 883, 72123 V{\"a}ster{\aa}s, Sweden
\authorcr \Affilfont
\email{sergei.silvestrov@mdu.se}}
\subjclass[2020]{17B61, 17D30, 17B63, 16D20, 17D25}
\keywords{Hom-Novikov-Poisson color Hom-algebra, Hom-Gelfand-Dorfman color Hom-algebra}
\date{}
\begin{document}
\maketitle

\abstract{The main feature of color Hom-algebras is that the identities defining the structures
are twisted by even linear maps.  The purpose of this paper is to introduce and give some constructions of admissible Hom-Novikov-Poisson color Hom-algebras and Hom-Gelfand-Dorfman color Hom-algebras. Their bimodules and matched pairs are defined and the relevant properties
and theorems are given. Also, the
connections between Hom-Novikov-Poisson color Hom-algebras and Hom-Gelfand-Dorfman color Hom-algebras is proved. Furthermore, we show  that  the class  of  admissible Hom-Novikov-Poisson color Hom-algebras is closed under tensor product.
}


\section{Introduction}

A Novikov algebra has a binary operation such that the associator is left-symmetric and that the right multiplication operators commute.  Novikov algebras play a major role in the studies of Hamiltonian operators and Poisson brackets of hydrodynamic type \cite{bn,dn1,dn2,dg1,dg2,GelfandDorfman1979:Hamoperatandassociatoralgebraic}.  The left-symmetry of the associator implies that every Novikov algebra is Lie admissible, i.e., the commutator bracket $[x,y] = xy - yx$ gives it a Lie algebra structure.

Poisson algebras are used in many fields in mathematics and physics.  In mathematics, Poisson algebras play a fundamental role in Poisson geometry \cite{vaisman}, quantum groups \cite{cp,dri87}, and deformation of commutative associative algebras \cite{ger2}.  In physics, Poisson algebras are a major part of deformation quantization \cite{kont}, Hamiltonian mechanics \cite{arnold}, and topological field theories \cite{ss}.  Poisson-like structures are also used in the study of vertex operator algebras \cite{fb}.

The theory of Hom-algebras has been initiated in \cite{HartwigLarSil:defLiesigmaderiv, LarssonSilvJA2005:QuasiHomLieCentExt2cocyid,LarssonSilv2005:QuasiLieAlg} motivated by quasi-deformations of Lie algebras of vector fields, in particular q-deformations of Witt and Virasoro algebras. Hom-Lie algebras and more general quasi-Hom-Lie algebras were introduced first by Hartwig, Larsson and Silvestrov in  \cite{HartwigLarSil:defLiesigmaderiv} where a general approach to discretization of Lie algebras of vector fields using general twisted derivations ($\sigma$-deriva\-tions) and a general method for construction of deformations of Witt and Virasoro type algebras based on twisted derivations have been developed. The general quasi-Lie algebras, containing the quasi-Hom-Lie algebras and Hom-Lie algebras as subclasses, as well their graded color generalization, the color quasi-Lie algebras including color quasi-Hom-Lie algebras, color Hom-Lie algebras and their special subclasses the quasi-Hom-Lie superalgebras and Hom-Lie superalgebras, have been first introduced in  \cite{HartwigLarSil:defLiesigmaderiv,LarssonSilvJA2005:QuasiHomLieCentExt2cocyid,LarssonSilv2005:QuasiLieAlg,LarssonSilv:GradedquasiLiealg,LarssonSilv:quasidefsl2,SigSilv:CzechJP2006:GradedquasiLiealgWitt}.
Subsequently, various classes of Hom-Lie admissible algebras have been considered in \cite{ms:homstructure}. In particular, in \cite{ms:homstructure}, the Hom-associative algebras have been introduced and shown to be Hom-Lie admissible, that is leading to Hom-Lie algebras using commutator map as new product, and in this sense constituting a natural generalization of associative algebras as Lie admissible algebras leading to Lie algebras using commutator map. Furthermore, in \cite{ms:homstructure}, more general $G$-Hom-associative algebras including Hom-associative algebras, Hom-Vinberg algebras (Hom-left symmetric algebras), Hom-pre-Lie algebras (Hom-right symmetric algebras), and some other Hom-algebra structures, generalizing $G$-associative algebras, Vinberg and pre-Lie algebras respectively, have been introduced and shown to be Hom-Lie admissible, meaning that for these classes of Hom-algebras, the operation of taking commutator leads to Hom-Lie algebras as well. Also, flexible Hom-algebras have been introduced, connections to Hom-algebra generalizations of derivations and of adjoint maps have been noticed, and some low-dimensional Hom-Lie algebras have been described.
In Hom-algebra structures, defining algebra identities are twisted by linear maps.
Since the pioneering works \cite{HartwigLarSil:defLiesigmaderiv,LarssonSilvJA2005:QuasiHomLieCentExt2cocyid,LarssonSilv2005:QuasiLieAlg,LarssonSilv:GradedquasiLiealg,LarssonSilv:quasidefsl2,ms:homstructure}, Hom-algebra structures have developed in a popular broad area with increasing number of publications in various directions.
Hom-algebra structures include their classical counterparts and open new broad possibilities for deformations, extensions to Hom-algebra structures of representations, homology, cohomology and formal deformations, Hom-modules and Hom-bimodules, Hom-Lie admissible Hom-coalgebras, Hom-coalgebras, Hom-bialgebras, Hom-Hopf algebras, $L$-modules, $L$-comodules and Hom-Lie quasi-bialgebras, $n$-ary generalizations of BiHom-Lie algebras and BiHom-associative algebras and generalized derivations, Rota-Baxter operators, Hom-dendriform color Hom-algebras, Rota-Baxter bisystems and covariant bialgebras, Rota-Baxter cosystems, coquasitriangular mixed bialgebras, coassociative Yang-Baxter pairs, coassociative Yang-Baxter equation and generalizations of Rota-Baxter systems and algebras, curved $\mathcal{O}$-operator systems and their connections with tridendriform systems and pre-Lie algebras, BiHom-algebras, BiHom-Frobenius algebras and double constructions, infinitesimal BiHom-bialgebras and Hom-dendriform $D$-bialgebras, Hom-algebras have been considered
\cite{AbdaouiMabroukMakhlouf,
AmmarEjbehiMakhlouf:homdeformation,
AttanLaraiedh:2020ConstrBihomalternBihomJordan,
Bakayoko:LaplacehomLiequasibialg,
Bakayoko:LmodcomodhomLiequasibialg,
BakBan:bimodrotbaxt,
BakyokoSilvestrov:HomleftsymHomdendicolorYauTwi,
BakyokoSilvestrov:MultiplicnHomLiecoloralg,
BenHassineChtiouiMabroukNcib19:CohomLiedeformBiHomleftsym,
BenMakh:Hombiliform,
BenAbdeljElhamdKaygorMakhl201920GenDernBiHomLiealg,
CaenGoyv:MonHomHopf,
ChtiouiMabroukMakhlouf2,
DassoundoSilvestrov2021:NearlyHomass,
EbrahimiFardGuo08,
GrMakMenPan:Bihom1,
HassanzadehShapiroSutlu:CyclichomolHomasal,
HounkonnouDassoundo:centersymalgbialg,
HounkonnouHoundedjiSilvestrov:DoubleconstrbiHomFrobalg,
HounkonnouDassoundo:homcensymalgbialg,
kms:narygenBiHomLieBiHomassalgebras2020,
Laraiedh1:2021:BimodmtchdprsBihomprepois,
LarssonSigSilvJGLTA2008:QuasiLiedefFttN,
LarssonSilvJA2005:QuasiHomLieCentExt2cocyid,
LarssonSilv:quasidefsl2,
LarssonSilvestrovGLTMPBSpr2009:GenNComplTwistDer,
MaZheng:RotaBaxtMonoidalHomAlg,
MabroukNcibSilvestrov2020:GenDerRotaBaxterOpsnaryHomNambuSuperalgs,
MakhSilv:HomDeform,
LiuMakhMenPan:RotaBaxteropsBiHomassalg,
MakhSilv:HomAlgHomCoalg,
MakYau:RotaBaxterHomLieadmis,
RichardSilvestrovJA2008,
RichardSilvestrovGLTbnd2009,
SaadaouSilvestrov:lmgderivationsBiHomLiealgebras,
ShengBai:homLiebialg,
Sheng:homrep,
SigSilv:GLTbdSpringer2009,
SilvestrovParadigmQLieQhomLie2007,
SilvestrovZardeh2021:HNNextinvolmultHomLiealg,
QSunHomPrealtBialg,
SunLi2017:parakahlerhomhomleftsymetric,
Yau:ModuleHomalg,
Yau:HomHom,
Yau:HombialgcomoduleHomalg,
Yau:HomYangBaHomLiequasitribial,
YauHomMalcevHomalternHomJord}.

In \cite{yau5} the author initiated the study of a twisted generalization of Novikov algebras, called Hom-Novikov algebras.
A Hom-Novikov algebra $A$ has a binary operation $\cdot$ and a linear self-map $\alpha$, and it satisfies some $\alpha$-twisted versions of the defining identities of a Novikov algebra.  In \cite{yau5} several constructions of Hom-Novikov algebras were given and some low dimensional Hom-Novikov algebras were classified. Further, Hom-Poisson algebras were defined in \cite{MakhSilv:HomDeform} by Makhlouf and Silvestrov.  It is shown in \cite{MakhSilv:HomDeform} that Hom-Poisson algebras play the same role in the deformation of commutative Hom-associative algebras as Poisson algebras do in the deformation of commutative associative algebras.

In this paper, we introduce and obtain some results on construction of admissible Hom-Novikov-Poisson color Hom-algebras and Hom-Gelfand-Dorfman color Hom-algebras. Their bimodules and matched pairs are defined
and the relevant properties and theorems are obtained. We also show  that  the class  of  admissible Hom-Novikov-Poisson color Hom-algebras are closed under tensor product. In Section \ref{sec:PreliminariesandSomeresults}, we introduce the notions of bimodules and matched pairs of Hom-associative color Hom-algebras, Hom-Novikov color Hom-algebras
and Hom-Lie color Hom-algebras in which we give some results and some examples.
In Section \ref{sec:admissHomNovikovcoloralgebs}, we establish the notions of admissible Hom-Novikov-Poisson color Hom-algebras and we give some explicit constructions.
Their bimodule and matched pair are defined
and their related relevant properties are also given. Finally, we  show  that  the  much  larger  class  of  admissible  Hom-Novikov-Poisson  color Hom-algebras is also closed under tensor products. In Section \ref{sec:homGelDorfmalgs}, we introduce the notions of Hom-Gelfand-Dorfman color Hom-algebras and we discuss some
basic properties and examples of these objects.
Moreover, we
characterize the representations and matched pairs of Hom-Gelfand-Dorfman color Hom-algebras and provide some key constructions.

\section{Preliminaries and some results}
\label{sec:PreliminariesandSomeresults}
Throughout the article, we assume that all linear spaces are over an algebraically closed field $\mathbb{K}$ of characteristic $0$, and denote by $\mathbb{K^{\ast}}=\mathbb{K}\backslash \{0\}$ the group of invertible elements of $\mathbb{K}$
with respect to the multiplication in $\mathbb{K}$.

In this section, we introduce the notions of bimodules and matched pairs of Hom-associative color Hom-algebras, Hom-Novikov color Hom-algebras
and Hom-Lie color Hom-algebras in which we give some results and examples.

Let $\Gamma$ be an abelian group. A linear space $V$ is said to be $\Gamma$-graded, if there is a family
$(V_{\gamma})_{\gamma\in \Gamma}$ of vector subspace of $V$ such that $$V=\bigoplus_{\gamma\in \Gamma}V_{\gamma}.$$
An element $x \in V$ is said to be homogeneous of  degree $\gamma \in \Gamma$ if $x \in V_{\gamma}, \gamma\in \Gamma$, and in this case, $\gamma$ is called the degree of $x$.
In the sequel, we will denote the set of all the homogeneous elements of $V$ by $\mathcal{H}(V)$.
As usual, we denote by $\overline{x}$ the degree of an element $x \in V$. Thus each homogeneous
element $x \in V$ determines a unique group element  $\overline{x} \in \Gamma$ by $x \in V_{\overline{x}}$.
Thus, when no confusion occur, we can drop $"-"$ in notation of degree for convenience of exposition.

Let $V=\bigoplus_{\gamma\in \Gamma}V_{\gamma}$ and $V^{'}=\bigoplus_{\gamma\in \Gamma}V^{'}_{\gamma}$ be two $\Gamma$-graded linear spaces. A linear mapping $f: V \longrightarrow V^{'}$  is said to be homogeneous of  degree $\upsilon \in \Gamma$ if
  $f(V_{\gamma})\subseteq V^{'}_{\gamma+\upsilon}$ for all $\gamma \in \Gamma.$
If in addition $f$ is  homogeneous of degree zero, i.e. $f(V_{\gamma})\subseteq V^{'}_{\gamma}$ holds for any $\gamma \in \Gamma$, then $f$ is said to be even.

An algebra $\mathcal{A}$ is said to be $\Gamma$-graded if its underlying linear space is $\Gamma$-graded, i.e. $\mathcal{A}=\bigoplus_{\gamma\in \Gamma}\mathcal{A}_{\gamma}$, and if, furthermore $\mathcal{A}_{\gamma}\mathcal{A}_{\gamma'}\subseteq \mathcal{A}_{\gamma+\gamma'}$, for all $\gamma, \gamma'\in \Gamma$. It is easy to see
that if $\mathcal{A}$ has a unit element $e$, it follows that $e \in \mathcal{A}_{0}$. A subalgebra of $\mathcal{A}$ is said to be $\Gamma$-graded if it is $\Gamma$-graded as a subspace of $\mathcal{A}$.
Let $\mathcal{A}^{'}$ be another $\Gamma$-graded algebra. A homomorphism $f:\mathcal{A} \longrightarrow \mathcal{A}^{'}$ of $\Gamma$-graded algebras is by definition a homomorphism  of the algebra $\mathcal{A}$ into the algebra  $\mathcal{A}^{'}$, which is, in addition an even mapping.
\begin{defn}[\cite{BahturinMikhPetrZaicevIDLSbk92,ChenSilvestrovOystaeyen:RepsCocycleTwistsColorLie,ChenPetitOystaeyenCOHCHLA,MikhZolotykhCALSbk95,PiontkovskiSilvestrovC3dCLA,ScheunertGLA,ScheunertCOH2,ScheunertZHA,Silvestrov:class3dimcolLiealg}] Let $\mathbb{K}$ be a field and  $\Gamma$ be an abelian group. A map $\varepsilon:\Gamma\times\Gamma\rightarrow \mathbb{K^{\ast}}$ is called a commutation factor on ${\Gamma}$ if the following identities hold, for all $a,b,c$ in $\Gamma$
\begin{enumerate}
\item~~$\varepsilon(a,b)~\varepsilon(b,a)=1,$
\item~~$\varepsilon(a,b+c)=\varepsilon(a,b)~\varepsilon(a,c),$
\item~~$\varepsilon(a+b,c)=\varepsilon(a,c)~\varepsilon(b,c).$
\end{enumerate}
\end{defn}
The definition above implies, in particular, the following relations
$$\varepsilon(a,0)=\varepsilon(0,a)=1,\ \varepsilon(a,a)=\pm1, \  \textrm{for\ all}\  a \in \Gamma.$$
If $x$ and $x'$ are two homogeneous elements of degree $\gamma$ and $\gamma'$ respectively and $\varepsilon$ is a skewsymmetric bicharacter, then we shorten the notation by writing $\varepsilon(x,x')$ instead of $\varepsilon(\gamma,\gamma')$ since the degree of every homogeneous element is unique.
\begin{rmk}\label{remarkepsilon}
Let $A$ and $V$ be two $\Gamma$-graded linear spaces such that
$$ A \oplus V=\bigoplus_{\gamma\in\Gamma}(A \oplus V)_\gamma=\bigoplus_{\gamma\in\Gamma}(A_\gamma \oplus V_\gamma),$$ then, for all $X_1= x_{1}+v_1\in A_{\gamma_1} \oplus V_{\gamma_1},X_2=x_{2}+v_2\in A_{\gamma_2} \oplus V_{\gamma_2}$ we have  $$\varepsilon(x_1,x_2)=\varepsilon(x_1,v_2)=\varepsilon(v_1,x_2)=\varepsilon(v_1,v_2)=\varepsilon(X_1,X_2).$$
\end{rmk}
\begin{ex} Some standard examples of skew-symmetric bicharacters are:
 \begin{enumerate}
\item [1)] $\Gamma=\mathbb{Z}_2,\quad \varepsilon(i, j)=(-1)^{ij}$,
\item [2)] $\Gamma=\mathbb{Z}_2^n, \quad
\varepsilon((\alpha_1, \dots, \alpha_n), (\beta_1, \dots, \beta_n)):= (-1)^{\alpha_1\beta_1+\dots+\alpha_n\beta_n}.$
\item [3)] $\Gamma=\mathbb{Z}_2\times\mathbb{Z}_2,\quad \varepsilon((i_1, i_2), (j_1, j_2))=(-1)^{i_1j_2-i_2j_1}$,
\item [4)] $\Gamma=\mathbb{Z}\times\mathbb{Z} ,\quad \varepsilon((i_1, i_2), (j_1, j_2))=(-1)^{(i_1+i_2)(j_1+j_2)}$.
\end{enumerate}
\end{ex}
\begin{defn}
A color Hom-algebra or a Hom-color algebra, $(A,\cdot,\varepsilon,\alpha)$ is a $\Gamma$-graded linear space $A$ equipped with even bilinear multiplication $\cdot$, even twisting map $\alpha$ and commutation factor $\varepsilon$.
\end{defn}
\begin{defn}
 A derivation of degree $d\in \Gamma$ on a color Hom-algebra $(A, \cdot, \varepsilon, \alpha)$ is a linear map $D : A\rightarrow A$ such that for
any $x,y\in \mathcal{H}(A)$,
$$D(x\cdot y)=D(x)\cdot y+\varepsilon(d, x) x\cdot D(y).$$
In particular, an even derivation $D : A\rightarrow A$ is a derivation of degree zero, i.e., for all $x,y\in \mathcal{H}(A)$,
$$D(x\cdot y)=D(x)\cdot y+x\cdot D(y).$$
\end{defn}
\subsection{\texorpdfstring{$\boldsymbol\varepsilon$-}-Commutative Hom-associative color Hom-algebras}
\begin{defn}[\cite{LY}]\label{def:homasscoloralg}
A Hom-associative color Hom-algebra is a color Hom-algebra
$(A,\cdot,\varepsilon,\alpha)$ satisfying
for $x,y,z\in \mathcal{H}(A)$,
\begin{align}\label{Homass:homassociator}
    as_{A}(x,y,z):=\alpha(x)\cdot(y\cdot z)-(x\cdot y)\cdot\alpha(z)=0. &&  \mbox{(Hom-associativity)}
\end{align}
If in addition, for any $x, y \in \mathcal{H}(A)$,
\begin{equation}\label{commutative:homcoloralgeb}
x\cdot y= \varepsilon(x, y) y\cdot x,
\end{equation}
then $(A, \cdot,\varepsilon,\alpha)$ is said to be a $\varepsilon$-commutative Hom-associative color Hom-algebra.
\end{defn}
\begin{ex}
Let $A=A_0\oplus A_{1}=<e_1>\oplus<e_2,e_3>$  be a $3$-dimensional superspace. Then $A$ is a $\varepsilon$-commutative Hom-associative color Hom-algebra with
\begin{align*}
&\text{the bicharacter:}  \quad
\varepsilon(i,j)=(-1)^{ij},\\
&\text{the multiplication:} \quad
e_1\cdot e_2= e_2\cdot e_1=-2e_3, \\
&\text{the even linear map $\alpha : A\rightarrow A$ defined by}
\begin{array}[t]{l}
\alpha(e_1)=\sqrt{2}e_1,\quad \alpha(e_2)=e_3-e_2,\\
\quad \alpha(e_3)=e_3.
\end{array}
\end{align*}
\end{ex}

In the following, we introduce the notion of bimodule of $\varepsilon$-commutative Hom-associative color Hom-algebra.
\begin{defn}
Let $(A, \cdot, \varepsilon,\alpha)$ be an $\varepsilon$-commutative Hom-associative color Hom-algebra, $(V,\beta)$ be a pair consisting of $\Gamma$-graded linear space $V$ and an even linear map $\beta:V\rightarrow V$, and $ s: A \rightarrow End(V) $ be an even linear map. The triple $(s, \beta, V)$ is called a bimodule of $(A, \cdot, \varepsilon,\alpha)$ if for all $ x, y \in  \mathcal{H}(A), v \in \mathcal{H}(V) $,
\begin{align}
\label{bimod:comm.hom.ass.color.algeb.lara.silvest}
s(x\cdot y)\beta(v)=s(\alpha(x))s(y)v.
\end{align}
\end{defn}
\begin{prop}\label{prop:semidirect.comm.Hom.ass.color.algeb}
Let $(s, \beta, V)$ is a bimodule of a $\varepsilon$-commutative Hom-associative color Hom-algebra $( A, \cdot,\varepsilon, \alpha)$. Then the direct sum of $\Gamma$-graded linear spaces, $$ A \oplus V=\bigoplus_{\gamma\in\Gamma}(A \oplus V)_\gamma=\bigoplus_{\gamma\in\Gamma}(A_\gamma \oplus V_\gamma),$$  is turned into a $\varepsilon$-commutative Hom-associative color Hom-algebra by defining multiplication and the twisting map in $A \oplus V $ for all $X_1= x_{1}+v_1\in A_{\gamma_1} \oplus V_{\gamma_1},X_2=x_{2}+v_2\in A_{\gamma_2} \oplus V_{\gamma_2}$ by
\begin{eqnarray}
\label{operation.semidirect.ass.color.algeb1}
(x_{1} + v_{1})\ast (x_{2} + v_{2}) &=& x_{1} \cdot x_{2} + \big(s(x_{1})v_{2} + \varepsilon(v_1,x_2)s(x_{2})v_{1}\big),\\
\label{operation.semidirect.ass.color.algeb2}
(\alpha\oplus\beta)(x_{1} + v_{1})&=&\alpha(x_{1}) + \beta(v_{1}).
\end{eqnarray}
\end{prop}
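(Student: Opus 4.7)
The plan is to verify the two axioms defining an $\varepsilon$-commutative Hom-associative color Hom-algebra for the new structure $(A\oplus V,\ast,\varepsilon,\alpha\oplus\beta)$: the $\varepsilon$-commutativity condition \eqref{commutative:homcoloralgeb} and the Hom-associativity condition \eqref{Homass:homassociator}. Throughout the argument I would fix homogeneous elements $X_i=x_i+v_i\in A_{\gamma_i}\oplus V_{\gamma_i}$ for $i=1,2,3$, use Remark \ref{remarkepsilon} tacitly to identify $\varepsilon(x_i,x_j),\varepsilon(v_i,x_j),\varepsilon(x_i,v_j),\varepsilon(v_i,v_j)$ with $\varepsilon(X_i,X_j)$, and use the evenness of $\alpha,\beta$ to identify $\varepsilon(\cdot,\alpha(x))$ with $\varepsilon(\cdot,x)$ and $\varepsilon(\cdot,\beta(v))$ with $\varepsilon(\cdot,v)$.

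For $\varepsilon$-commutativity, I would expand both sides of $X_1\ast X_2=\varepsilon(X_1,X_2)\,X_2\ast X_1$ via \eqref{operation.semidirect.ass.color.algeb1}. The $A$-component reduces to $x_1\cdot x_2=\varepsilon(x_1,x_2)\,x_2\cdot x_1$, which is exactly the $\varepsilon$-commutativity of $A$. The $V$-component reduces, using the identity $\varepsilon(a,b)\varepsilon(b,a)=1$, to a swap of the two cross-terms $s(x_1)v_2$ and $\varepsilon(v_1,x_2)\,s(x_2)v_1$, so it is pure bicharacter bookkeeping.

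The main content is Hom-associativity. I would expand $(\alpha\oplus\beta)(X_1)\ast(X_2\ast X_3)$ and $(X_1\ast X_2)\ast(\alpha\oplus\beta)(X_3)$ using \eqref{operation.semidirect.ass.color.algeb1}--\eqref{operation.semidirect.ass.color.algeb2}. The $A$-components coincide by Hom-associativity of $A$, so only the $V$-components remain, and each side produces three terms, one linear in each of $v_1,v_2,v_3$. The $v_3$-terms reduce to $s(\alpha(x_1))s(x_2)v_3=s(x_1\cdot x_2)\beta(v_3)$, which is exactly \eqref{bimod:comm.hom.ass.color.algeb.lara.silvest} applied to $(x_1,x_2,v_3)$. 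For the $v_2$-terms I would apply \eqref{bimod:comm.hom.ass.color.algeb.lara.silvest} twice to turn $s(\alpha(x_1))s(x_3)v_2$ and $s(\alpha(x_3))s(x_1)v_2$ into $s(x_1\cdot x_3)\beta(v_2)$ and $s(x_3\cdot x_1)\beta(v_2)$ respectively, and then use $x_1\cdot x_3=\varepsilon(x_1,x_3)\,x_3\cdot x_1$ to produce the factor $\varepsilon(x_1,x_3)$ that, together with $\varepsilon(v_2,x_3)$, reassembles into $\varepsilon(x_1+v_2,x_3)$ on the right. The $v_1$-terms follow by the same mechanism with the roles of $x_1$ and $x_2$ exchanged: one rewrites $s(x_2\cdot x_3)\beta(v_1)=\varepsilon(x_2,x_3)s(x_3\cdot x_2)\beta(v_1)=\varepsilon(x_2,x_3)s(\alpha(x_3))s(x_2)v_1$, and the extra $\varepsilon(v_1,x_2)\varepsilon(v_1,x_3)$ on the left is matched by $\varepsilon(v_1,x_2)\,\varepsilon(x_2+v_1,x_3)$ on the right.

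The hard part is not conceptual but bookkeeping: the only structural ingredients are the bimodule axiom \eqref{bimod:comm.hom.ass.color.algeb.lara.silvest}, the $\varepsilon$-commutativity of $A$, and the evenness of $\alpha,\beta$, and the only genuine risk is mistracking the $\varepsilon$-signs produced when reordering elements under $\ast$ and under the action $s$. Once the terms are grouped by which $v_i$ they involve, each of the three pairings matches by one application of the bimodule axiom and at most one transposition of two factors in $A$.
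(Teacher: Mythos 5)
Your proposal is correct and follows essentially the same route as the paper's proof: a direct expansion of both axioms, with the $\varepsilon$-commutativity reducing to that of $A$ plus bicharacter bookkeeping via Remark \ref{remarkepsilon}, and the Hom-associativity split into the $A$-component and three $V$-terms (one per $v_i$), each killed by the bimodule axiom \eqref{bimod:comm.hom.ass.color.algeb.lara.silvest} combined with the $\varepsilon$-commutativity \eqref{commutative:homcoloralgeb} of $A$. The grouping and the ingredients you cite for each group match the paper's computation exactly.
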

\begin{proof}
We prove the commutativity and Hom-associativity in $A\oplus V$.
For all elements $X_i=x_i+v_i\in A_{\gamma_i} \oplus V_{\gamma_i},\ i=1,2,3$,
\begin{align*}
&X_1 \ast X_2 =(x_1+v_1)\ast(x_2+v_2)\\
&\quad=x_1\cdot x_2+\big(s(x_1)v_2+\varepsilon(v_1,x_2)s(x_2)v_1\big)\\
&\quad=\varepsilon(x_1,x_2)x_2\cdot x_1+\big(\varepsilon(v_1,x_2)s(x_2)v_1+s(x_1)v_2\big)\\
&\quad=\varepsilon(X_1,X_2)x_2\cdot x_1+\big(\varepsilon(X_1,X_2)s(x_2)v_1+s(x_1)v_2\big)\\
&\quad=\varepsilon(X_1,X_2)\big(x_2\cdot x_1+s(x_2)v_1+\varepsilon(X_2,X_1)s(x_1)v_2\big)\\
&\quad=\varepsilon(X_1,X_2)\big(x_2\cdot x_1+s(x_2)v_1+\varepsilon(v_2,x_1)s(x_1)v_2\big)\\
&\quad=\varepsilon(X_1,X_2)(x_2+v_2)\ast(x_1+v_1)\\
&\quad=\varepsilon(X_1,X_2)X_2\ast X_1,\\[0,2cm]
&(X_1\ast X_2)\ast(\alpha+\beta)X_3-(\alpha+\beta)X_1\ast(X_2\ast X_3)\\
&\quad=\big((x_1+v_1)\ast(x_2+v_2)\big)\ast(\alpha+\beta)(x_3+v_3)\\&\quad\quad-(\alpha+\beta)(x_1+v_1)\ast\big((x_2+v_2)\ast(x_3+v_3)\big)\\
&\quad=\big(x_1\cdot x_2+s(x_1)v_2+\varepsilon(v_1,x_2)s(x_2)v_1\big)\ast(\alpha(x_3)+\beta(v_3))\\
&\quad\quad-(\alpha(x_1)+\beta(v_1))\ast\big(x_2\cdot x_3+s(x_2)v_3+\varepsilon(v_2,x_3)s(x_3)v_2\big)\\
&\quad=(x_1\cdot x_2)\cdot \alpha(x_3)+s(x_1\cdot x_2)\beta(v_3)+\varepsilon(x_1+x_2,x_3)s(\alpha(x_3))s(x_1)v_2\\
&\quad\quad+\varepsilon(x_1+x_2,x_3)\varepsilon(x_1,x_2)s(\alpha(x_3))s(x_2)v_1-\alpha(x_1)\cdot(x_2\cdot x_3)\\
&\quad\quad-s(\alpha(x_1))s(x_2)v_3-\varepsilon(x_2,x_3)s(\alpha(x_1))s(x_3)v_2\\&\quad\quad-\varepsilon(v_1,x_2+x_3)s(x_2\cdot x_3)\beta(v_1)\\
&\quad=\underbrace{\Big((x_1\cdot x_2)\cdot\alpha(x_3)-\alpha(x_1)\cdot(x_2\cdot x_3)\Big)}_{\text{$=0$ by \eqref{Homass:homassociator}}}\\&\quad\quad
+\underbrace{\Big(s(x_1\cdot x_2)\beta(v_3)-s(\alpha(x_1))s(x_2)v_3\Big)}_{\text{$=0$ by \eqref{bimod:comm.hom.ass.color.algeb.lara.silvest}}}\\
&\quad\quad+\underbrace{\varepsilon(x_2,x_3)\Big(\varepsilon(x_1,x_3)s(\alpha(x_3))s(x_1)v_2-s(\alpha(x_1))s(x_3)v_2\Big)}_{\text{$=0$ by \eqref{commutative:homcoloralgeb} and \eqref{bimod:comm.hom.ass.color.algeb.lara.silvest}}}\\
&\quad\quad+\underbrace{\varepsilon(x_1,x_2+x_3)\Big(\varepsilon(x_2,x_3)s(\alpha(x_3))s(x_2)v_1-s(x_2\cdot x_3)\beta(v_1)\Big)}_{\text{$=0$ by \eqref{commutative:homcoloralgeb} and \eqref{bimod:comm.hom.ass.color.algeb.lara.silvest}}}=0.
\qedhere
\end{align*}
\end{proof}
The $\varepsilon$-commutative Hom-associative color Hom-algebra constructed in Proposition \ref{prop:semidirect.comm.Hom.ass.color.algeb} is denoted
by $(A \oplus V, \ast, \varepsilon,\alpha + \beta)$ or $A \ltimes_{s, \alpha, \beta} V.$
\begin{ex}
Let $(A,\cdot,\varepsilon,\alpha)$ be a $\varepsilon$-commutative Hom-associative color Hom-algebra. Then, $(S,\alpha,A)$ with  $S(x)y=x\cdot y$ for all $x,y\in \mathcal{H}(A)$, is a bimodule of $(A,\cdot,\varepsilon,\alpha)$  called the regular bimodule of $(A,\cdot,\varepsilon,\alpha)$.
\end{ex}
In the following, we introduce the notion of matched pair of $\varepsilon$-commutative Hom-associative color Hom-algebras.
\begin{prop}
\label{prop:matched.pairs.comm.Hom-ass.coloralgeb}
Let $( A, \cdot_A, \varepsilon,\alpha)$ and $(B,\cdot_B, \varepsilon, \beta)$ be $\varepsilon$-commutative
Hom-associative color Hom-algebras. Suppose that there are even linear maps $s_{A}:  A\rightarrow End(B)$ and $s_{
B} :  B\rightarrow End(A)$ such that
 $(s_{A},  \beta, B)$ is
a bimodule of $ A$, and $(s_{B}, \alpha,
 A)$ is a bimodule of $B$,
satisfying, for any $ x, y \in   \mathcal{H}(A), a,b \in  \mathcal{H}(B)$, the following conditions:
\begin{align} \label{matched.pairs:comm.Hom.ass.algeb1}
&\varepsilon(b,x)\beta(a)\cdot_B(s_A(x)b)+\varepsilon(a,b+x)s_A(s_B(b)x)\beta(a)\nonumber\\&=\varepsilon(a+b,x)s_A(\alpha(x))(a\cdot_B b),\\[0.2cm]
\label{matched.pairs:comm.Hom.ass.algeb2}
&\beta(a)\cdot_B(s_A(x)b)+\varepsilon(a,x+b)\varepsilon(x,b)s_A(s_B(b)x)\beta(a)\nonumber\\
&=\varepsilon(a,x)s_A(x)a\cdot_B\beta(b)+s_A(s_B(a)x)\beta(b),\\[0.2cm]
\label{matched.pairs:comm.Hom.ass.algeb3}
&\varepsilon(y,a)\alpha(x)\cdot_A(s_B(a)y)+\varepsilon(x,y+a)s_B(s_A(y)a)\alpha(x)\nonumber\\&=\varepsilon(x+y,a)s_B(\beta(a))(x\cdot_A y),\\[0.2cm]
\label{matched.pairs:comm.Hom.ass.algeb4}
&\alpha(x)\cdot_A(s_B(a)y)+\varepsilon(x,a+y)\varepsilon(a,y)s_B(s_A(y)a)\alpha(x)\nonumber\\
&=\varepsilon(x,a)s_B(a)x\cdot_A\alpha(y)+s_B(s_A(x)a)\alpha(y).
\end{align}
Then, $(A,B,l_A,r_A,\beta,l_B,r_B,\alpha)$ is called a matched pair of $\varepsilon$-commutative
Hom-associ\-ative color Hom-algebras. In this case, there is a $\varepsilon$-commutative
Hom-associative color Hom-algebra structure on the direct sum of the underlying $\Gamma$-graded
linear spaces of $ A$ and $B$, $$ A \oplus B=\bigoplus_{\gamma
\in\Gamma}(A \oplus B)_\gamma = \bigoplus_{\gamma\in\Gamma}(A_\gamma \oplus B_\gamma),$$
given for all $ x+a\in A_{\gamma_1}\oplus B_{\gamma_1}, y+b\in A_{\gamma_2}\oplus B_{\gamma_2}$ by
\begin{eqnarray}
(x + a)\cdot (y + b) &=& \big(x\cdot_A y + s_{ B}(a)y + \varepsilon(x,b)s_{ B}(b)x\big)\nonumber\\
& +& \big (a\cdot_B b +  s_{A}(x)b + \varepsilon(a,y) s_{  A}(y)a\big),\\
(\alpha\oplus\beta)(x + a)&=&\alpha(x) + \beta(a).
\end{eqnarray}
\end{prop}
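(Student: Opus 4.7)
The plan is to verify the two defining conditions of an $\varepsilon$-commutative Hom-associative color Hom-algebra on $A\oplus B$: the $\varepsilon$-commutativity of the new product, and the vanishing of its Hom-associator. The compatibility of $\alpha\oplus\beta$ with this product will then follow automatically from its componentwise definition.

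First, for $\varepsilon$-commutativity, I compute $(x+a)\cdot(y+b)$ and $(y+b)\cdot(x+a)$ componentwise. The $A$-components read $x\cdot_A y+s_B(a)y+\varepsilon(x,b)s_B(b)x$ and $y\cdot_A x+s_B(b)x+\varepsilon(y,a)s_B(a)y$ respectively, and the $\varepsilon$-commutativity of $\cdot_A$ together with Remark \ref{remarkepsilon} (which identifies $\varepsilon$ on pure parts with $\varepsilon$ on the total grading) shows that the first equals $\varepsilon(X_1,X_2)$ times the second. The $B$-components are handled symmetrically.

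For Hom-associativity, I expand both triple products $(X_1\ast X_2)\ast(\alpha\oplus\beta)(X_3)$ and $(\alpha\oplus\beta)(X_1)\ast(X_2\ast X_3)$ with $X_i=x_i+a_i$, subtract, and project onto the $A$- and $B$-components. Each component naturally splits by how many factors are drawn from $A$ versus $B$. In the $A$-component: the purely $xxx$-terms cancel by Hom-associativity of $\cdot_A$; the $xxa$-type terms (appearing in each of the three cyclic positions) cancel in pairs by the matched pair identities \eqref{matched.pairs:comm.Hom.ass.algeb3}--\eqref{matched.pairs:comm.Hom.ass.algeb4}; the $xaa$-type terms cancel by the bimodule axiom \eqref{bimod:comm.hom.ass.color.algeb.lara.silvest} applied to $(s_B,\alpha,A)$; and there are no $aaa$-contributions since any such term carries a factor $s_B$ applied to an element of $B$ with no $A$-input. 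The $B$-component is treated symmetrically using Hom-associativity of $\cdot_B$, the identities \eqref{matched.pairs:comm.Hom.ass.algeb1}--\eqref{matched.pairs:comm.Hom.ass.algeb2}, and the bimodule axiom for $(s_A,\beta,B)$.

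The main obstacle is the combinatorial bookkeeping of the commutation factors: each expansion term carries a distinct $\varepsilon$-weight generated by the swaps hard-wired into the defining formula, and matching these against the precise $\varepsilon$-factors in \eqref{matched.pairs:comm.Hom.ass.algeb1}--\eqref{matched.pairs:comm.Hom.ass.algeb4} requires repeated use of the bicharacter identities $\varepsilon(a,b+c)=\varepsilon(a,b)\varepsilon(a,c)$, $\varepsilon(a+b,c)=\varepsilon(a,c)\varepsilon(b,c)$ and $\varepsilon(a,b)\varepsilon(b,a)=1$, together with Remark \ref{remarkepsilon}. The four matched pair conditions are calibrated so that, once terms are correctly grouped, each grouping is exactly an instance of one of these identities and vanishes.
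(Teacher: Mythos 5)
Your proposal is correct and follows essentially the same route as the paper: direct expansion of the $\varepsilon$-commutativity and Hom-associativity conditions on $A\oplus B$, with the pure terms killed by the corresponding axioms in $A$ and $B$, the mixed terms with two elements from one summand and one from the other matched against conditions \eqref{matched.pairs:comm.Hom.ass.algeb1}--\eqref{matched.pairs:comm.Hom.ass.algeb4} and the bimodule axiom \eqref{bimod:comm.hom.ass.color.algeb.lara.silvest}, and the $\varepsilon$-bookkeeping handled via Remark \ref{remarkepsilon}. Your grouping of terms by how many factors come from $A$ versus $B$ is exactly the organization implicit in the paper's computation.
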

\begin{proof}
Let $ X=x+a\in A_{\gamma_1}\oplus B_{\gamma_1}, Y=y+b\in A_{\gamma_2}\oplus B_{\gamma_2} ,Z=z+c \in A_{\gamma_3}\oplus B_{\gamma_3}$. First, we prove the commutativity condition:
\begin{align*}
&X\cdot Y-\varepsilon(X,Y) Y\cdot X =(x+a)\cdot(y+b)-\varepsilon(X,Y)(y+b)\cdot(x+a)\\
&\quad=x\cdot_A y+s_B(a)y+\varepsilon(x,b) s_B(b)x+a\cdot_B b+s_A(x)b+\varepsilon(a,y)s_A(y)a\\
&\quad\quad-\varepsilon(X,Y)\big(y\cdot_A x+s_B(b)x+\varepsilon(y,a)s_B(a)y+b\cdot_B a+s_A(y)a
+\varepsilon(b,x) s_A(x)b\big)\\
&\quad=\big(x\cdot_A y-\varepsilon(X,Y) y\cdot_A x\big)+\big(a\cdot_B b-\varepsilon(X,Y) b\cdot_B a\big)\\
&\quad\quad+\big(s_B(a)y-\varepsilon(X,Y)\varepsilon(y,a)s_B(a)y\big)+\big(\varepsilon(x,b)s_B(b)x-\varepsilon(X,Y)s_B(b)x\big)
\\
&\quad\quad+\big(s_A(x)b-\varepsilon(X,Y)\varepsilon(b,x)s_A(x)b\big) +\big(\varepsilon(a,y)s_A(y)a-\varepsilon(X,Y)s_A(y)a\big)=0. \\
&\hspace{9,5cm} \text{\scriptsize (using Remark \ref{remarkepsilon} and \eqref{commutative:homcoloralgeb})}
\end{align*}
Next, we prove the Hom-associativity condition:
\begin{align*}
&(X\cdot Y)\cdot(\alpha+\beta)Z=\big((x+a)\cdot(y+b)\big)\cdot(\alpha+\beta)(z+c)\\
&\text{\scriptsize (using Remark \ref{remarkepsilon})}\\
&\quad=\Big(\big(x\cdot_A y+s_B(a)y+\varepsilon(x,y)s_B(b)x\big)\\
&\quad\quad+\big(a\cdot_B b+s_A(x)b+\varepsilon(x,y)s_A(y)a\big)\Big)\cdot(\alpha(z)+\beta(c))\\
&\quad=(x\cdot_A y)\cdot_A\alpha(z)+s_B(a)y\cdot_A\alpha(z)+\varepsilon(x,y)s_B(b)x\cdot_A\alpha(z)\\
&\quad\quad+s_B(a\cdot_B b)\alpha(z)+s_B(s_A(x)b)\alpha(z)+\varepsilon(x,y)s_B(s_A(y)a)\alpha(z)\\
&\quad\quad+\varepsilon(x+y,z)\Big(s_B(\beta(c))(x\cdot_A y)+s_B(\beta(c))s_B(a)y\\&\quad\quad+\varepsilon(x,y)s_B(\beta(c))s_B(b)x\Big)\\
&\quad\quad+(a\cdot_B b)\cdot_B\beta(c)+s_A(x)b\cdot_B\beta(c)+\varepsilon(x,y)s_A(y)a\cdot_B\beta(c)\\
&\quad\quad+s_A(x\cdot_A y)\beta(c)+s_A(s_B(a) y)\beta(c)+\varepsilon(x,y)s_A(s_B(b)x)\beta(c)\\
&\quad\quad+\varepsilon(x+y,z)
\Big(s_A(\alpha(z))(a\cdot_B b)+s_A(\alpha(z))s_A(x)b+\varepsilon(x,y)s_A(\alpha(z))s_A(y)a\Big)\\
&\text{\scriptsize (using Remark \ref{remarkepsilon})}\\
&\quad=\Big((x\cdot_A y)\cdot_A\alpha(z)+s_B(a)y\cdot_A\alpha(z)+\varepsilon(x,y)s_B(b)x\cdot_A\alpha(z)\\
&\quad\quad+s_B(a\cdot_B b)\alpha(z)+s_B(s_A(x)b)\alpha(z)+\varepsilon(x,y)s_B(s_A(y)a)\alpha(z)\\
&\quad\quad+(a\cdot_B b)\cdot_B\beta(c)+s_A(x)b\cdot_B\beta(c)+\varepsilon(x,y)s_A(y)a\cdot_B\beta(c)\\
&\quad\quad+s_A(x\cdot_A y)\beta(c)+s_A(s_B(a) y)\beta(c)+\varepsilon(x,y)s_A(s_B(b)x)\beta(c)\Big)\\
&\quad\quad+\varepsilon(x+y,z)\Big(s_B(\beta(c))(x\cdot_A y)+s_B(\beta(c))s_B(a)y\\
&\quad\quad+\varepsilon(x,y)s_B(\beta(c))s_B(b)x+s_A(\alpha(z))(a\cdot_B b)+s_A(\alpha(z))s_A(x)b\\&\quad\quad+\varepsilon(x,y)s_A(\alpha(z))s_A(y)a
\Big),\\[0,2cm]
&(\alpha+\beta)X\cdot(Y\cdot Z)\\
&\quad=(\alpha+\beta)(x+a)\cdot\big((y+b)\cdot(z+c)\big)\\
&\quad=(\alpha(x)+\beta(a))\cdot\Big(\big(y\cdot_A z+s_B(b)z+\varepsilon(y,c)s_B(c)y\big)\\
&\quad\quad+\big(b\cdot_B c+s_B(y)c+\varepsilon(b,z)s_A(z)b\big)\Big)\\
&\text{\scriptsize (using Remark \ref{remarkepsilon})}\\
&\quad=\alpha(x)\cdot_A(y\cdot_A z)+\alpha(x)\cdot_As_B(b)z+\varepsilon(y,z)\alpha(x)\cdot_A s_B(c)y\\
&\quad\quad+s_B(\beta(a))(y\cdot_A z)+s_B(\beta(a))s_B(b)z+\varepsilon(y,c)s_B(\beta(a))s_B(c)y\\
&\quad\quad+\varepsilon(x,y+z)\Big(s_B(b\cdot_B c)\alpha(x)+s_B(s_A(y)c)\alpha(x)\\
&\quad\quad+\varepsilon(b,z)s_B(s_A(z)b)\alpha(x)\Big)+\beta(a)\cdot_B(b\cdot_B c)+\beta(a)\cdot_B s_A(y)c\\&\quad\quad+\varepsilon(b,z)\beta(a)\cdot_B s_A(z)b+s_A(\alpha(x))(b\cdot_B c)+s_A(\alpha(x))s_A(y)c\\&\quad\quad+\varepsilon(y,z)s_A(\alpha(x))s_A(z)b+\varepsilon(x,y+z)\Big(s_A(y\cdot_A z)\beta(a)\\
&\quad\quad+s_A(s_B(b)z)\beta(a)+\varepsilon(y,z)s_A(s_B(c)y)\beta(a)\Big).
\end{align*}
Using \eqref{matched.pairs:comm.Hom.ass.algeb1}-\eqref{matched.pairs:comm.Hom.ass.algeb4} and that $(s_A, \beta,B)$ and $(s_B, \alpha,A)$ are bimodules of $( A, \cdot_A, \varepsilon,\alpha)$ and $(B,\cdot_B, \varepsilon, \beta)$, respectively, we derive that $(A \oplus B,\cdot,\varepsilon,\alpha+\beta)$ is $\varepsilon$-commutative Hom-associative color Hom-algebra.
This completes the proof.
\end{proof}
This $\varepsilon$-commutative
Hom-associative color Hom-algebra, constructed in Proposition \ref{prop:matched.pairs.comm.Hom-ass.coloralgeb}, is denoted by
$( A\bowtie B, \cdot,\varepsilon, \alpha + \beta)$ or $ A \bowtie^{s_{  A}, \beta}_{s_{ B}, \alpha} B.$
\subsection{On Hom-Novikov color Hom-algebras}
\begin{defn}[\cite{Bakayoko2016arXiv:Hom-Novikovcoloralgebras}]\label{def:HomNovikovcoloralgeb}
A color Hom-algebra $(A,\cdot,\varepsilon,\alpha)$ is called a Hom-Novikov color Hom-algebra if the
following identities are satisfied for all $x,y,z \in \mathcal{H}(A)$:
\begin{align}
\label{eqhomNovikov1}
&(x\cdot y)\cdot\alpha(z)-\alpha(x)\cdot(y\cdot z)=\varepsilon(x,y)\big((y\cdot x)\cdot\alpha(z)-\alpha(y)\cdot(x\cdot z)\big),\\
\label{eqhomNovikov2}
&(x\cdot y)\cdot\alpha(z)=\varepsilon(y,z)(x\cdot z)\cdot\alpha(y).
\end{align}
\end{defn}
\begin{rmk}
If $\alpha=id_{ A}$ in Definition \ref{def:HomNovikovcoloralgeb}, we recover a Novikov color Hom-algebra. So, Novikov color Hom-algebras are a special case of Hom-Novikov color Hom-algebras when the twisting linear map is the identity map.
\end{rmk}
\begin{exes}[Hom-Novikov color Hom-algebras]
Here there are some examples of Hom-Novikov color Hom-algebras.
\begin{enumerate}
    \item
Any $\varepsilon$-commutative Hom-associative color Hom-algebra is a Hom-Novikov color Hom-algebra.
\item
Let $A=A_0\oplus A_{1}=<e_1,e_2>\oplus<e_3,e_4>$ be a $4$-dimensional superspace. Then $A$ is a Hom-Novikov color Hom-algebra with
\begin{align*}
& \text{the bicharacter} \quad
\varepsilon(i,j)=(-1)^{ij}, \\
& \text{the multiplication:} \quad
\begin{array}[t]{ll}
e_1\cdot e_1=\lambda_1e_2,& e_1\cdot e_3=\lambda_2e_4,\\
e_3\cdot e_3=\lambda_3e_2,& e_3\cdot e_1=\lambda_4 e_4, \quad\lambda_i \in \mathbb{K}
\end{array}\\
& \text{the even linear map $\alpha: A \rightarrow A$ defined by}
\begin{array}[t]{lll}
&\alpha(e_1)=-e_1,& \alpha(e_2)=e_1-e_2,\\
&\alpha(e_3)=e_4,& \alpha(e_4)=e_3+2e_4.
\end{array}
\end{align*}
\end{enumerate}
\end{exes}
\begin{prop}[\cite{Bakayoko2016arXiv:Hom-Novikovcoloralgebras}]\label{prop:twist.Hom.Nov.color.algeb}
Let $\mathcal{A}=(A, \cdot ,\varepsilon)$ be a Novikov color Hom-algebra and
$\alpha :\mathcal{A}\rightarrow \mathcal{A}$ be a Novikov color Hom-algebras
morphism. Define $\cdot_{\alpha}:A \times A\rightarrow A$ for all $x, y\in \mathcal{H}(A)$, by
$x\cdot _{\alpha}y =\alpha (x\cdot y)$.
Then, $\mathcal{A}_\alpha=(A, \cdot _{\alpha},\varepsilon, \alpha)$ is a Hom-Novikov color Hom-algebra called the $\alpha$-twist or Yau twist of $(A, \cdot,\varepsilon)$.
\end{prop}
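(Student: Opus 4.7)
The plan is to exploit the morphism property of $\alpha$ to reduce the two Hom-Novikov identities for $\cdot_\alpha$ to the un-twisted Novikov identities for $\cdot$, at the cost of an outer application of $\alpha^2$. The key observation is purely computational: since $\alpha$ is an even Novikov color algebra morphism, we have $\alpha(u\cdot v)=\alpha(u)\cdot\alpha(v)$ for all homogeneous $u,v\in A$, and moreover $\alpha$ preserves the $\Gamma$-grading so that all commutation factors are left unchanged when $\alpha$ is pushed through.

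The first step is to rewrite each of the four basic trilinear expressions appearing in \eqref{eqhomNovikov1} and \eqref{eqhomNovikov2} (with $\cdot$ replaced by $\cdot_\alpha$) in terms of the original product. A direct unfolding of the definition $x\cdot_\alpha y=\alpha(x\cdot y)$ together with the morphism property yields, for all $x,y,z\in\mathcal{H}(A)$,
\begin{align*}
(x\cdot_\alpha y)\cdot_\alpha \alpha(z) &= \alpha^2\!\bigl((x\cdot y)\cdot z\bigr),\\
\alpha(x)\cdot_\alpha (y\cdot_\alpha z) &= \alpha^2\!\bigl(x\cdot (y\cdot z)\bigr),
\end{align*}
and the analogous identities with $x$ and $y$, respectively $y$ and $z$, interchanged. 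These are the only computations needed, and each is an immediate two-line manipulation using that $\alpha$ is a morphism.

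Once these rewritings are in hand, the first Hom-Novikov identity \eqref{eqhomNovikov1} for $(A,\cdot_\alpha,\varepsilon,\alpha)$ becomes
\[
\alpha^2\!\bigl((x\cdot y)\cdot z-x\cdot(y\cdot z)\bigr)=\varepsilon(x,y)\,\alpha^2\!\bigl((y\cdot x)\cdot z-y\cdot(x\cdot z)\bigr),
\]
and the second identity \eqref{eqhomNovikov2} becomes
\[
\alpha^2\!\bigl((x\cdot y)\cdot z\bigr)=\varepsilon(y,z)\,\alpha^2\!\bigl((x\cdot z)\cdot y\bigr).
\]
Both follow by applying the linear map $\alpha^2$ to the corresponding Novikov color algebra identities for $(A,\cdot,\varepsilon)$, which are in force by hypothesis (they are the $\alpha=\mathrm{id}$ case of Definition~\ref{def:HomNovikovcoloralgeb}, as recorded in the preceding remark).

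I do not foresee a genuine obstacle here; the main point requiring care is merely bookkeeping of the commutation factors. Since $\alpha$ is even, it does not alter the degree of any homogeneous element, so every $\varepsilon(\cdot,\cdot)$ appearing in the un-twisted identities transports verbatim to the twisted setting, and no additional sign factors are generated when $\alpha$ is pushed through the product. This closes the verification.
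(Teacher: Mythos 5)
Your proof is correct. The paper states this proposition as a cited result from \cite{Bakayoko2016arXiv:Hom-Novikovcoloralgebras} without reproducing a proof, but your argument --- pushing the even morphism $\alpha$ through the product to get $(x\cdot_\alpha y)\cdot_\alpha\alpha(z)=\alpha^2\bigl((x\cdot y)\cdot z\bigr)$ and $\alpha(x)\cdot_\alpha(y\cdot_\alpha z)=\alpha^2\bigl(x\cdot(y\cdot z)\bigr)$, then applying $\alpha^2$ to the untwisted Novikov identities with all commutation factors unchanged --- is exactly the standard Yau-twist technique that the paper itself employs for the analogous statements (e.g.\ Theorem~\ref{twist-Gelfand-Dorfman} and the relation $as_{\mathcal{A}_\alpha}^{l}=\alpha^2\circ as_{\mathcal{A}}^{l}$ in the proof of Theorem~\ref{twist-admissible Hom-Nov-Poiss}), so there is nothing to add.
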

In the following we introduce the notions of bimodule and matched pair of Novikov color Hom-algebras.
\begin{defn}
Let $(A, \cdot, \varepsilon,\alpha)$ be a Hom-Novikov color Hom-algebra, $(V,\beta)$ is a pair of $\Gamma$-graded linear space $V$ and an even linear map $\beta:V\rightarrow V$. Let $ l,r: A \rightarrow End(V) $ be two even linear maps. The quadruple $(l,r, \beta, V)$ is called a bimodule of $(A, \cdot, \varepsilon,\alpha)$ if for all $ x, y \in  \mathcal{H}(A), v \in \mathcal{H}(V) $,
\begin{align}
\label{Cond1}
l(x\cdot y)\beta(v)-l(\alpha(x))l(y)v=\varepsilon(x,y)\big(l(y\cdot x)\beta(v)-l(\alpha(y))l(x)v\big), \\
\label{Cond2}
r(\alpha(y))l(x)v-l(\alpha(x))r(y)v=\varepsilon(x,v)\big(r(\alpha(y))r(x)v-r(x\cdot y)\beta(v)\big),\\
\label{Cond3}
r(\alpha(y))r(x)v-r(x\cdot y)\beta(v)=\varepsilon(v,x)\big(r(\alpha(y))l(x)v-l(\alpha(x))r(y)v\big),\\
\label{Cond4}
l(x\cdot y)\beta(v)=\varepsilon(y,v)r(\alpha(y))l(x)v, \\
\label{Cond5}
r(\alpha(y))l(x)v=\varepsilon(v,y)l(x\cdot y)\beta(v),\\
\label{Cond6}
r(\alpha(y))r(x)v=\varepsilon(x,y)r(\alpha(x))r(y)v.
\end{align}
\end{defn}
\begin{prop}\label{semi direct Hom Novikov}
Let $(l, r, \beta, V)$ is a bimodule of a Hom-Novikov color Hom-algebra $( A, \cdot,\varepsilon, \alpha)$. Then the direct sum of $\Gamma$-graded linear spaces, $$ A \oplus V=\bigoplus_{\gamma\in\Gamma}(A \oplus V)_\gamma=\bigoplus_{\gamma\in\Gamma}(A_\gamma \oplus V_\gamma),$$
is turned into a Hom-Novikov color Hom-algebra by defining multiplication in $A \oplus V $ for all
$ X_1=x_{1}+v_1\in A_{\gamma_1}\oplus V_{\gamma_1},X_2=x_{2}+v_2\in A_{\gamma_2}\oplus V_{\gamma_2}$ by
\begin{eqnarray}\label{operationsemidirectNov1}
(x_{1} + v_{1})\ast (x_{2} + v_{2}) &=& x_{1} \cdot x_{2} + (l(x_{1})v_{2} + r(x_{2})v_{1}),\\
\label{operationsemidirectNov2}
(\alpha\oplus\beta)(x_{1} + v_{1})&=&\alpha(x_{1}) + \beta(v_{1}).
\end{eqnarray}
\end{prop}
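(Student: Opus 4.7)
The plan is to verify the two defining axioms of a Hom-Novikov color Hom-algebra, namely \eqref{eqhomNovikov1} and \eqref{eqhomNovikov2}, directly on $A\oplus V$ equipped with the product $\ast$ and twisting $\alpha\oplus\beta$ defined by \eqref{operationsemidirectNov1}--\eqref{operationsemidirectNov2}. For three homogeneous elements $X_i=x_i+v_i\in A_{\gamma_i}\oplus V_{\gamma_i}$, both sides of each axiom will be expanded, and the resulting difference will split as an $A$-part plus a $V$-part that is linear in each $v_i$ separately.

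First I would verify \eqref{eqhomNovikov1}. After expansion, the $A$-valued component of the left-hand side minus right-hand side reads
\[
\bigl((x_1\cdot x_2)\cdot\alpha(x_3)-\alpha(x_1)\cdot(x_2\cdot x_3)\bigr)-\varepsilon(x_1,x_2)\bigl((x_2\cdot x_1)\cdot\alpha(x_3)-\alpha(x_2)\cdot(x_1\cdot x_3)\bigr),
\]
which vanishes because $(A,\cdot,\varepsilon,\alpha)$ is a Hom-Novikov color Hom-algebra. The $V$-valued component then breaks into three groups according to which $v_i$ appears. The group carrying $v_3$ reduces to $l(x_1\cdot x_2)\beta(v_3)-l(\alpha(x_1))l(x_2)v_3$ set against its $\varepsilon(x_1,x_2)$-twisted swap, which cancels by \eqref{Cond1}. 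The group carrying $v_2$ reduces to an instance of \eqref{Cond2} (after identifying $\varepsilon(x_1,v_2)$ with $\varepsilon(x_1,x_2)=\varepsilon(X_1,X_2)$ by Remark \ref{remarkepsilon}), and the group carrying $v_1$ reduces analogously to \eqref{Cond3}.

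Next I would verify \eqref{eqhomNovikov2}. The $A$-part again vanishes by the corresponding axiom on $A$, and the $V$-part splits by $v_i$ in exactly the same way: the $v_3$-coefficient is precisely \eqref{Cond4} applied to $x_1,x_2,v_3$, the $v_2$-coefficient is \eqref{Cond5} applied to $x_1,x_3,v_2$, and the $v_1$-coefficient is \eqref{Cond6} applied to $x_2,x_3,v_1$. In each case the needed scalar identification $\varepsilon(x_i,v_j)=\varepsilon(x_i,x_j)$ is supplied by Remark \ref{remarkepsilon}.

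The proof is essentially a bookkeeping exercise; the main obstacle is the careful tracking of the commutation factors $\varepsilon$ that arise from commuting homogeneous vectors past the operators $l$ and $r$ in mixed products. Remark \ref{remarkepsilon} removes most of this friction by collapsing every relevant factor to $\varepsilon(X_i,X_j)$, so that once each group of terms is isolated the matching with one of the bimodule axioms \eqref{Cond1}--\eqref{Cond6} becomes immediate. The write-up can therefore proceed along exactly the pattern of the proof of Proposition \ref{prop:semidirect.comm.Hom.ass.color.algeb}: expand both sides, regroup the $V$-contributions by the index of $v_i$, and underbrace each vanishing group by the bimodule identity that annihilates it.
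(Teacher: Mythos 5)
Your proposal is correct and follows essentially the same route as the paper: expand both sides of \eqref{eqhomNovikov1} and \eqref{eqhomNovikov2} on $A\oplus V$, observe that the $A$-component vanishes by the Hom-Novikov axioms in $A$, and cancel the three $V$-components (grouped by which $v_i$ they carry) using \eqref{Cond1}--\eqref{Cond3} for the first axiom and \eqref{Cond4}--\eqref{Cond6} for the second, with the degree identifications $\varepsilon(x_i,v_j)=\varepsilon(x_i,x_j)$ exactly as you indicate. Your matching of each group to its bimodule condition (including the substitutions $x_1,x_2,v_3$; $x_1,x_3,v_2$; $x_2,x_3,v_1$) agrees with the paper's computation.
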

\begin{proof} We prove the axioms \eqref{eqhomNovikov1} and \eqref{eqhomNovikov2} in $A\oplus V$.
For all $X_i=x_i+v_i\in A_{\gamma_i} \oplus V_{\gamma_i},\ i\in\{1;2;3\}$,
\begin{align*}\label{condit. du Bimod.}
&(X_1\ast X_2)\ast(\alpha+\beta)X_3-(\alpha+\beta) X_1\ast(X_2 \ast X_3)\\
&\quad -\varepsilon(X_1,X_2)\big((X_2\ast X_1)\ast(\alpha+\beta)X_3-(\alpha+\beta) X_2\ast(X_1\ast X_3)\big)\\
&=\big((x_1+v_1)\ast(x_2+v_2)\big)\ast(\alpha+\beta)(x_3+v_3)\\
&\quad -(\alpha+\beta)(x_1+v_1)\ast((x_2+v_2)\ast(x_3+v_3))\\
&\quad -\varepsilon(X_1,X_2)\Big(\big((x_2+v_2)\ast(x_1+v_1)\big)\ast(\alpha+\beta)(x_3+v_3)\\
&\quad -(\alpha+\beta)(x_2+v_2)\ast\big((x_1+v_1)\ast(x_3+v_3)\big)\Big)\\
&=\big(x_1\cdot x_2+l(x_1)v_2+r(x_2)v_1\big)\ast(\alpha(x_2)+\beta(v_3))\\
&\quad -(\alpha(x_1)+\beta(v_1))\big(x_2\cdot x_3+l(x_2)v_3+r(x_3)v_2\big)\\
&\quad -\varepsilon(x_1,x_2)\Big(\big(x_2\cdot x_1+l(x_2)v_1+r(x_1)v_2\big)\ast(\alpha(x_3)+\beta(v_3))\\
&\quad-(\alpha(x_2)+\beta(v_2))\ast\big(x_1\cdot x_3+l(x_1)v_3+r(x_3)v_1\big)\Big)\\
&=(x_1\cdot x_2)\cdot\alpha(x_3)+l(x_1\cdot x_2)\beta(v_3)+r(\alpha(x_3))l(x_1)v_2+r(\alpha(x_3))r(x_2)v_1\\
&\quad -\alpha(x_1)(x_2\cdot x_3)-l(\alpha(x_1))l(x_2)v_3-l(\alpha(x_1))r(x_3)v_2-r(x_2\cdot x_3)\beta(v_1)\\
&\quad -\varepsilon(x_1,x_2)\Big((x_2\cdot x_1)\cdot\alpha(x_3)+l(x_2\cdot x_1)\beta(v_3)+r(\alpha(x_3))l(x_2)v_1\\
&\quad+r(\alpha(x_3))r(x_1)v_2-\alpha(x_2)(x_1\cdot x_3)-l(\alpha(x_2))l(x_1)v_3-l(\alpha(x_2))r(x_3)v_1\\
&\hspace{5cm} -r(x_1\cdot x_3)\beta(v_2)\Big)\\
&=\underbrace{\Big((x_1\cdot x_2)\cdot\alpha(x_3)-\alpha(x_1)(x_2\cdot x_3)-\varepsilon(x_1,x_2)((x_2\cdot x_1)\cdot\alpha(x_3)-\alpha(x_2)\cdot(x_1\cdot x_3))\Big)}_{\text{$=0$ by \eqref{eqhomNovikov1} in $A$}}\\
&+\underbrace{\Big(l(x_1\cdot x_2)\beta(v_3)-l(\alpha(x_1))l(x_2)v_3-\varepsilon(x_1,x_2)(l(x_2\cdot x_1)\beta(v_3)-l(\alpha(x_2))l(x_1)v_3)\Big)}_{\text{$=0$ by \eqref{Cond1}}}\\
&+\underbrace{\Big(r(\alpha(x_3))l(x_1)v_2-l(\alpha(x_1))r(x_3)v_2-\varepsilon(x_1,v_2)(r(\alpha(x_3))r(x_1)v_2-r(x_1\cdot x_3)\beta(v_2))\Big)}_{\text{$=0$ by \eqref{Cond2}}}\\
&+\underbrace{\Big(r(\alpha(x_3))r(x_2)v_1-r(x_2\cdot x_3)\beta(v_1)-\varepsilon(v_1,x_2)(r(\alpha(x_3))l(x_2)v_1-l(\alpha(x_2))r(x_3)v_1)\Big)}_{\text{$=0$ by \eqref{Cond3}}}\\
&\quad=0,\\
&(X_1\ast X_2)\ast(\alpha+\beta) X_3-\varepsilon(X_2,X_3)(X_1\ast X_3)\ast(\alpha+\beta)X_2\\
&=\big((x_1+v_1)\ast(x_2+v_2)\big)\ast(\alpha+\beta)(x_3+v_3)\\
&\quad-\varepsilon(X_2,X_3)\big((x_1+v_1)\ast(x_3+v_3)\big)\ast(\alpha+\beta)(x_2+v_2)\\
&=\big(x_1\cdot x_2+l(x_1)v_2+r(x_2)v_1\big)\ast(\alpha(x_3)+\beta(v_3))\\
&\quad -\varepsilon(x_2,x_3)\big(x_1\cdot x_3+l(x_1)v_3+r(x_3)v_1\big)\ast(\alpha(x_2)+\beta(v_2))\\
&=(x_1\cdot x_2)\cdot\alpha(x_3)+l(x_1\cdot x_2)\beta(v_3)+r(\alpha(x_3))l(x_1)v_2+r(\alpha(x_3))r(x_2)v_1\\
&\quad -\varepsilon(x_2,x_3)((x_1\cdot x_3)\cdot\alpha(x_2)+l(x_1\cdot x_3)\beta(v_2)+r(\alpha(x_2))l(x_1)v_3
+r(\alpha(x_2))r(x_3)v_1)\\
&=\underbrace{\Big((x_1\cdot x_2)\cdot\alpha(x_3)-(x_1\cdot x_3)\cdot\alpha(x_2)\Big)}_{\text{$=0$ by \eqref{eqhomNovikov2} in $A$}}+\underbrace{\Big(l(x_1\cdot x_2)\beta(v_3)-\varepsilon(x_2,x_3)r(\alpha(x_2))l(x_1)v_3\Big)}_{\text{$=0$ by \eqref{Cond4}}}\\
&\quad+\underbrace{\Big(r(\alpha(x_2))l(x_1)v_3-\varepsilon(v_2,x_3)l(x_1\cdot x_3)\beta(v_2)\Big)}_{\text{$=0$ by \eqref{Cond5}}}\\
&\hspace{3cm}+\underbrace{\Big(r(\alpha(x_3))r(x_2)v_1-\varepsilon(x_2,x_3)r(\alpha(x_2))r(x_3)v_1\Big)}_{\text{$=0$ by \eqref{Cond6}}}=0.
\qedhere \end{align*}
\end{proof}
The Hom-Novikov color Hom-algebra constructed in Proposition \ref{semi direct Hom Novikov} is denoted
by $(A \oplus V, \ast, \varepsilon,\alpha + \beta)$ or $A \times_{l,r, \alpha, \beta} V.$
\begin{ex}
Let $(A,\cdot,\varepsilon,\alpha)$ be a Hom-Novikov color Hom-algebra. Then $(L,R,\alpha,A)$ is a bimodule of $(A,\cdot,\varepsilon,\alpha)$, where $L(x)y=x\cdot y$ and $R(x)y=y\cdot x$ for all $x,y\in \mathcal{H}(A)$, called the regular bimodule of $(A,\cdot,\varepsilon,\alpha)$.
\end{ex}
\begin{prop}
\label{thm:matchedpairs}
Let $( A, \cdot_A, \varepsilon,\alpha)$ and $(B,\cdot_B, \varepsilon, \beta)$ be two
Hom-Novikov color Hom-algebras. Suppose there are even linear maps $l_{  A}, r_{
 A}:  A\rightarrow End(B)$ and $l_{B}, r_{
B} :  B\rightarrow End(A)$ such that the quadruple
 $(l_{  A}, r_{
 A},  \beta, B)$ is
a bimodule of $ A,$ and $(l_{ B}, r_{B}, \alpha,
 A)$ is a bimodule of $B,$
satisfying, for any $ x, y \in   \mathcal{H}(A), a,b \in  \mathcal{H}(B)$, the following conditions:
\begin{eqnarray}
\label{match. pair1}
&\begin{array}{l}
r_A(\alpha(x))(a\cdot_B b)-\beta(a)\cdot_B(r_A(x)b)-r_A(l_B(b)x)\beta(a)\\
\quad =\varepsilon(a,b)\big(r_A(\alpha(x))(b\cdot_B a)-\beta(b)\cdot_B(r_A(x)a)-r_A(l_B(a)x)\beta(b)\big),
\end{array}
\\[0.2cm]
\label{match. pair2}
&
\begin{array}{l}
(r_A(x)a)\cdot_B\beta(b)+l_A(l_B(a)x)\beta(b)-\beta(a)\cdot_B(l_A(x)b)-r_A(r_B(b)x)\beta(a)\\
\quad =\varepsilon(a,x)\big((l_A(x)a)\cdot_B\beta(b)+l_A(r_B(a)x)\beta(b)-l_A(\alpha(x))(a\cdot_B b)\big),
\end{array}
\\[0.2cm]
\label{match. pair3}
&\begin{array}{l}
(l_A(x)a)\cdot_B\beta(b)-l_A(r_B(a)x)\beta(b)-l_A(\alpha(x))(a\cdot_B b)\\
\quad =\varepsilon(x,a)\big((r_A(x)a)\cdot_B\beta(b)+l_A(l_B(a)x)\beta(b)-\beta(a)\cdot_B(l_A(x)b)\\
\hspace{8cm} \big.-r_A(r_B(b)x)\beta(a)\big),
\end{array}
\\[0.2cm]
\label{match. pair4}
& \begin{array}{l}
r_B(\beta(a))(x\cdot_A y)-\alpha(x)\cdot_A(r_B(a)y)-r_B(l_A(y)a)\alpha(x)\\
\quad =\varepsilon(x,y)\big(r_B(\beta(a))(y\cdot_A x)-\alpha(y)\cdot_A(r_B(a)x)-r_B(l_A(x)a)\alpha(y)\big),
\end{array}
\\[0.2cm]
\label{match. pair5}
&
\begin{array}{l}
(r_B(a)x)\cdot_A\alpha(y)+l_B(l_A(x)a)\alpha(y)-\alpha(x)\cdot_A(l_B(a)y)-r_B(r_A(y)a)\alpha(x) \\
\quad =\varepsilon(x,a)\big((l_B(a)x)\cdot_A\alpha(y)+l_B(r_A(x)a)\alpha(y)-l_B(\beta(a))(x\cdot_A y)\big),
\end{array}\\[0.2cm]
\label{match. pair6}
&
\begin{array}{l}
(l_B(a)x)\cdot_A\alpha(y)-l_B(r_A(x)a)\alpha(y)-l_B(\beta(a))(x\cdot_A y)\\
\quad =\varepsilon(a,x)\big((r_B(a)x)\cdot_A\alpha(y)+l_B(l_A(x)a)\alpha(y)-\alpha(x)\cdot_A(l_B(a)y)\big.\\
\hspace{8cm}
\big.-r_B(r_A(y)a)\alpha(x)\big).
\end{array}
\end{eqnarray}
Then, $(A,B,l_A,r_A,\beta,l_B,r_B,\alpha)$ is called a matched pair of
Hom-Novikov color Hom-algebras. In this case, there is a Hom-Novikov color Hom-algebra structure on the direct sum $ A \oplus B=\bigoplus_{\gamma\in\Gamma}(A \oplus B)_\gamma=\bigoplus_{\gamma\in\Gamma}(A_\gamma \oplus B_\gamma),$ of
the underlying $\Gamma$-graded linear spaces of $ A$ and $B$ given for all $ x+a\in A_{\gamma_1}\oplus B_{\gamma_1}, y+b\in A_{\gamma_2}\oplus B_{\gamma_2}$ by
\begin{equation} \label{matchedpairproduct}
\begin{array}{l}
(x + a)\cdot (y + b) =
\big(x\cdot_A y + l_{ B}(a)y + r_{ B}(b)x\big)
+ \big (a\cdot_B b +  l_{A}(x)b +  r_{  A}(y)a\big),\\
(\alpha\oplus\beta)(x + a)=\alpha(x) + \beta(a).
\end{array}
\end{equation}
\end{prop}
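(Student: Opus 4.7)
The plan is to verify directly that the product and twisting map defined by \eqref{matchedpairproduct} satisfy the two Hom-Novikov axioms \eqref{eqhomNovikov1} and \eqref{eqhomNovikov2} on $A\oplus B$. I would proceed exactly in the spirit of the proof of Proposition \ref{semi direct Hom Novikov}: fix three homogeneous elements $X=x+a$, $Y=y+b$, $Z=z+c$ with $X\in A_{\gamma_1}\oplus B_{\gamma_1}$, $Y\in A_{\gamma_2}\oplus B_{\gamma_2}$, $Z\in A_{\gamma_3}\oplus B_{\gamma_3}$, and expand everything with the help of Remark~\ref{remarkepsilon}, which lets me replace $\varepsilon(X_i,X_j)$ by the corresponding factors on the pure homogeneous components.

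For the first identity \eqref{eqhomNovikov1}, the plan is to expand
$(X\cdot Y)\cdot(\alpha+\beta)(Z)-(\alpha+\beta)(X)\cdot(Y\cdot Z)
-\varepsilon(X,Y)\bigl((Y\cdot X)\cdot(\alpha+\beta)(Z)-(\alpha+\beta)(Y)\cdot(X\cdot Z)\bigr)$
into its $A$-component and $B$-component. The terms then organise themselves into blocks according to their type: (a) pure algebra terms in $A$ and pure algebra terms in $B$, which vanish by \eqref{eqhomNovikov1} in $A$ and $B$ respectively; (b) mixed terms involving only one side-action, i.e.\ the four blocks where two of the three inputs live in one algebra and one in the other, which vanish by the bimodule axioms \eqref{Cond1}-\eqref{Cond3} (and their counterparts for the bimodule $(l_B,r_B,\alpha,A)$); (c) the genuinely mixed terms in which each of $A$ and $B$ acts on the other, which is where the compatibility conditions \eqref{match. pair1}-\eqref{match. pair3} kick in for the $B$-component and \eqref{match. pair4}-\eqref{match. pair6} for the $A$-component. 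Care must be taken in relabelling indices so that, after collecting, each block matches the exact shape of one of these three kinds of identities.

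For the second identity \eqref{eqhomNovikov2}, one expands
$(X\cdot Y)\cdot(\alpha+\beta)(Z)-\varepsilon(Y,Z)(X\cdot Z)\cdot(\alpha+\beta)(Y)$,
separates the $A$- and $B$-parts, and again groups the resulting summands. The pure terms cancel by \eqref{eqhomNovikov2} in $A$ and $B$; the one-sided mixed terms cancel by \eqref{Cond4}, \eqref{Cond5}, \eqref{Cond6}; and the two-sided mixed terms give nothing new because \eqref{match. pair1}-\eqref{match. pair6} already ensure the compatibility needed after swapping the roles of the two inputs that \eqref{eqhomNovikov2} permutes.

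The proof is essentially a large bookkeeping exercise, and the main obstacle is keeping track of the commutation factors $\varepsilon(\cdot,\cdot)$ when shifting between degrees of elements in $A$ and elements in $B$; Remark~\ref{remarkepsilon} is crucial to rewrite every $\varepsilon$ in terms of the homogeneous arguments, which allows the cancellations to be read off transparently. Once the expansion is organised block-by-block as above, each block vanishes by exactly one of the cited axioms, completing the verification that $(A\oplus B,\cdot,\varepsilon,\alpha+\beta)$ is a Hom-Novikov color Hom-algebra.
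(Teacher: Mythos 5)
The paper itself gives no proof of Proposition \ref{thm:matchedpairs}: the statement is followed immediately by the notation $A \bowtie^{l_{A}, r_{A}, \beta}_{l_{B}, r_{B}, \alpha} B$ and nothing else. So there is no argument of the authors' to measure yours against, and your strategy --- a direct term-by-term verification of \eqref{eqhomNovikov1} and \eqref{eqhomNovikov2} on $A\oplus B$, organised into pure, one-sided and two-sided blocks, in the style of the proofs of Propositions \ref{prop:semidirect.comm.Hom.ass.color.algeb}, \ref{semi direct Hom Novikov} and \ref{prop:matched.pairs.comm.Hom-ass.coloralgeb} --- is certainly the intended route. For axiom \eqref{eqhomNovikov1} your block decomposition does line up: for instance, taking $X=x$, $Y=y$ pure $A$-elements and $Z=c$ a pure $B$-element, the $A$-component of the left-symmetry expression is exactly condition \eqref{match. pair4} and the $B$-component is exactly \eqref{Cond1}, and the other blocks match \eqref{Cond2}, \eqref{Cond3} and \eqref{match. pair1}--\eqref{match. pair6} in the same way.

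The gap is in your treatment of the second axiom \eqref{eqhomNovikov2}. You assert that the two-sided mixed terms ``give nothing new because \eqref{match. pair1}--\eqref{match. pair6} already ensure the compatibility needed after swapping the roles of the two inputs,'' but this is precisely the point that must be checked, and it is not true on its face: all six hypotheses have the shape of the left-symmetry axiom \eqref{eqhomNovikov1} (an expression equated to its $\varepsilon$-twisted swap in two of the arguments), whereas \eqref{eqhomNovikov2} generates cross terms of a different shape that are covered neither by them nor by the bimodule conditions \eqref{Cond1}--\eqref{Cond6}, which always involve two arguments from the same algebra. Concretely, with $X=x$, $Y=y$, $Z=c$ as above, the $B$-component of $(X\cdot Y)\cdot(\alpha+\beta)Z-\varepsilon(Y,Z)(X\cdot Z)\cdot(\alpha+\beta)Y$ is $l_A(x\cdot_A y)\beta(c)-\varepsilon(y,c)r_A(\alpha(y))l_A(x)c$, which vanishes by \eqref{Cond4}; but the $A$-component is
\begin{equation*}
r_B(\beta(c))(x\cdot_A y)-\varepsilon(y,c)\big((r_B(c)x)\cdot_A\alpha(y)+l_B(l_A(x)c)\alpha(y)\big),
\end{equation*}
and no combination of the stated hypotheses visibly yields its vanishing (the only hypotheses containing $r_B(\beta(c))(x\cdot_A y)$ is \eqref{match. pair4}, which relates it to entirely different terms). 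Before your proof can be considered complete you must either derive this identity (and the symmetric families of cross terms it represents) from the given conditions, or recognise that additional compatibility conditions of ``\eqref{eqhomNovikov2}-type'' are needed in the hypotheses --- in which case the deficiency lies in the statement itself, but your verification cannot simply wave it away.
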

\noindent We denote this Hom-Novikov color Hom-algebra by
$( A\bowtie B, \cdot,\varepsilon, \alpha + \beta)$ or $ A \bowtie^{l_{  A}, r_{  A}, \beta}_{l_{ B}, r_{ B}, \alpha} B.$
\subsection{On Hom-Lie color Hom-algebras}
\begin{defn}[\cite{AbdaouiAmmarMakhloufCohhomLiecolalg2015,CaoChen2012:SplitregularhomLiecoloralg,CalderonDelgado2012:splLiecolor,LarssonSilv2005:QuasiLieAlg,LarssonSilv:GradedquasiLiealg,SigSilv:CzechJP2006:GradedquasiLiealgWitt,LY}]
A Hom-Lie color Hom-algebra is a quadruple
$(A, [\cdot,\cdot],\varepsilon, \alpha)$ consisting of a $\Gamma$-graded vector space $A$, a bi-character $\varepsilon$, an even bilinear
mapping
$[\cdot,\cdot] :A\times A\rightarrow A$,
(i.e. $[A_a, A_b]\subseteq A_{a+b}$, for all $a, b \in \Gamma$) and an even homomorphism $\alpha:A\rightarrow A$ such that for homogeneous elements $x, y, z \in A$ we have
\begin{align}
    &[x,y]=-\varepsilon(x,y)[y,x], && \quad \text{$\varepsilon$-skew symmetry}, \\
    &\circlearrowleft_{x,y,z} \varepsilon(z,x)[\alpha(x),[y,z]]=0, && \quad \text{$\varepsilon$-Hom-Jacobi identity}
\end{align}
where $\circlearrowleft_{x,y,z}$
denotes the cyclic sum over $(x, y, z)$.
\end{defn}
\begin{rmk}
Hom-Lie color Hom-algebras contain ordinary Lie color algebras, Lie superalgebras and Lie algebras, as well as Hom-Lie superalgebras and Hom-Lie algebras for specific choices of the twisting map, grading group and commutation factor.
\begin{enumerate}
\item
When $\alpha =id_A$, one recovers Lie color algebras, and in particular if the grading group is $\mathbb{Z}_2$ and the commutation factor is defined as $\varepsilon(i,j)=(-1)^{ij}$ for all $i,j \in \mathbb{Z}$, then one gets Lie superalgebras.  \cite{BahturinMikhPetrZaicevIDLSbk92,ChenSilvestrovOystaeyen:RepsCocycleTwistsColorLie,ChenPetitOystaeyenCOHCHLA,MikhZolotykhCALSbk95,PiontkovskiSilvestrovC3dCLA,ScheunertGLA,ScheunertCOH2,ScheunertZHA}.
\item
 When $\alpha =id_A$ and $A$ is trivially graded, by the group with one element, we recover Lie algebras.
\item
 When $A$ is trivially graded, while $\alpha$ is an arbitrary linear map, we recover Hom-Lie algebras \cite{HartwigLarSil:defLiesigmaderiv,LarssonSilvJA2005:QuasiHomLieCentExt2cocyid,LarssonSilv:GradedquasiLiealg,LarssonSilv2005:QuasiLieAlg,ms:homstructure}, and if $A$ is graded by the group of two elements $\mathbb{Z}_2$, while $\alpha$ is an arbitrary even linear map, and the commutation factor is defined as $\varepsilon(i,j)=(-1)^{ij}$ for all $i,j \in \mathbb{Z}_2$, then we get Hom-Lie superalgebras.
 \end{enumerate}
\end{rmk}
\begin{prop}[ \cite{Bakayoko2016arXiv:Hom-Novikovcoloralgebras}]\label{Hom-Novik to Hom-Lie}
Let $(A,\cdot,\varepsilon,\alpha)$ be a Hom-Novikov color Hom-algebra. Then, there exists a Hom-Lie color
algebra structure on $ A $ given for all $x, y \in \mathcal{H}(A)$ by
\begin{equation}\label{eqNovtoLie}
    [x, y] = x \cdot y -\varepsilon(x,y) y \cdot x.
\end{equation}
\end{prop}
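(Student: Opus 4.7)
My approach is to verify the two defining axioms of a Hom-Lie color Hom-algebra for the bracket \eqref{eqNovtoLie}. The bracket is manifestly even, since both summands have degree $\overline{x}+\overline{y}$, and the twisting map $\alpha$ is already part of the data; so the substantive content is entirely in the $\varepsilon$-skew symmetry and the $\varepsilon$-Hom-Jacobi identity.

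I expect $\varepsilon$-skew symmetry to be immediate: using only the bicharacter relation $\varepsilon(x,y)\varepsilon(y,x)=1$, one has
\begin{align*}
[x,y]+\varepsilon(x,y)[y,x] &= \bigl(x\cdot y - \varepsilon(x,y)\,y\cdot x\bigr) + \varepsilon(x,y)\bigl(y\cdot x - \varepsilon(y,x)\,x\cdot y\bigr) \\
&= x\cdot y - \varepsilon(x,y)\varepsilon(y,x)\,x\cdot y = 0,
\end{align*}
so this step needs neither \eqref{eqhomNovikov1} nor \eqref{eqhomNovikov2}.

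The real work lies in the $\varepsilon$-Hom-Jacobi identity $\circlearrowleft_{x,y,z}\varepsilon(z,x)[\alpha(x),[y,z]]=0$. Expanding one inner bracket yields
\begin{align*}
[\alpha(x),[y,z]] &= \alpha(x)\cdot(y\cdot z) - \varepsilon(y,z)\,\alpha(x)\cdot(z\cdot y) \\
&\quad - \varepsilon(x,y+z)(y\cdot z)\cdot\alpha(x) + \varepsilon(x,y+z)\varepsilon(y,z)(z\cdot y)\cdot\alpha(x),
\end{align*}
so the full cyclic sum decomposes into twelve terms. I will read \eqref{eqhomNovikov1} as the statement that the Hom-associator $\mathrm{as}_A(x,y,z)=\alpha(x)\cdot(y\cdot z)-(x\cdot y)\cdot\alpha(z)$ is $\varepsilon$-symmetric in its first two arguments, and \eqref{eqhomNovikov2} as the $\varepsilon$-commutativity of right $\alpha$-multiplications, $(u\cdot v)\cdot\alpha(w)=\varepsilon(v,w)(u\cdot w)\cdot\alpha(v)$. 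My plan is first to apply \eqref{eqhomNovikov2} to bring every outer product $(u\cdot v)\cdot\alpha(w)$ into a single canonical ordering; then the six terms of that form collapse in pairs in the cyclic sum. The six remaining inner terms $\alpha(u)\cdot(v\cdot w)$ are handled next: via \eqref{eqhomNovikov1} each is replaced by $(u\cdot v)\cdot\alpha(w)$ minus the corresponding Hom-associator, and the $\varepsilon$-symmetric transposition of the associator produces a three-fold cancellation in the cyclic sum, in direct analogy with the classical Novikov-to-Lie derivation.

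The principal obstacle is purely combinatorial: each of the twelve terms carries a product of several bicharacter values arising from the cyclic prefactor $\varepsilon(z,x)$, from the $\varepsilon$-skew bracket, and from the $\varepsilon$-commutation in \eqref{eqhomNovikov2}. To keep the bookkeeping tractable, I will repeatedly simplify compound factors using $\varepsilon(a,b+c)=\varepsilon(a,b)\varepsilon(a,c)$ and $\varepsilon(a,b)\varepsilon(b,a)=1$, and collect terms in pairs related by cyclic rotation of $(x,y,z)$ before attempting any cancellation.
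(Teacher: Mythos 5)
The paper does not actually prove this proposition; it is imported from \cite{Bakayoko2016arXiv:Hom-Novikovcoloralgebras}, so there is no in-text argument to compare against. Judged on its own, your plan is correct: the skew-symmetry computation is right, the four-term expansion of $[\alpha(x),[y,z]]$ is right, and the twelve-term cyclic sum does vanish along the lines you describe (the six outer products cancel pairwise after normalizing with \eqref{eqhomNovikov2}, and the outer products reintroduced when you trade $\alpha(u)\cdot(v\cdot w)$ for $(u\cdot v)\cdot\alpha(w)$ minus the Hom-associator cancel the same way, while the associators cancel in three $\varepsilon$-symmetric pairs by \eqref{eqhomNovikov1}). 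Two remarks. First, a small point of attribution inside your own sketch: replacing $\alpha(u)\cdot(v\cdot w)$ by $(u\cdot v)\cdot\alpha(w)$ minus an associator is just the definition of the Hom-associator; \eqref{eqhomNovikov1} enters only at the final cancellation. Second, and more substantively, identity \eqref{eqhomNovikov2} is not needed at all, so your normalization step is a detour. Writing $N(x,y,z)=(x\cdot y)\cdot\alpha(z)-\alpha(x)\cdot(y\cdot z)$, the twelve terms pair off directly into
\begin{align*}
-\varepsilon(z,x)\bigl(N(x,y,z)-\varepsilon(x,y)N(y,x,z)\bigr)
-\varepsilon(x,y)\bigl(N(y,z,x)-\varepsilon(y,z)N(z,y,x)\bigr)&\\
-\varepsilon(y,z)\bigl(N(z,x,y)-\varepsilon(z,x)N(x,z,y)\bigr)&,
\end{align*}
each bracket vanishing by \eqref{eqhomNovikov1} alone. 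This shortens the bookkeeping considerably and shows that the conclusion holds for the larger class of $\varepsilon$-Hom-left-symmetric (Hom-pre-Lie) color Hom-algebras, of which Hom-Novikov color Hom-algebras are the special case singled out by \eqref{eqhomNovikov2}.
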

\begin{ex}
Let $A = A_{0} \oplus A_{1} =< e_1 > \oplus < e_2,e_3>$
 be a $3$-dimensional superspace. The quintuple $(A, \cdot,\varepsilon,\alpha)$ is a Hom-Novikov color Hom-algebra with
\begin{align*}
&\text{the multiplication:} \quad  e_1\cdot e_2=e_3,\quad e_2\cdot e_1= -e_3, \\
&\text{the bicharacter:} \quad \varepsilon(i,j)=(-1)^{ij},\\
&\text{the even linear map $\alpha:A\rightarrow A$ defined by} \quad
\begin{array}[t]{l}
\alpha(e_1)=-2e_1, \quad \alpha(e_2)=e_3,\\
\alpha(e_3)=e_2-e_3.
\end{array}
\end{align*}
Therefore, by Proposition \ref{Hom-Novik to Hom-Lie}, $(A,[\cdot,\cdot],\varepsilon,\alpha)$ is a Hom-Lie color Hom-algebra with $$[e_1,e_2]=-[e_2,e_1]=2e_3.$$
\end{ex}
\begin{prop}[\cite{AbdaouiAmmarMakhloufCohhomLiecolalg2015}]\label{twist Lie color alg}
Let $\mathcal{A}=(A, [\cdot,\cdot] ,\varepsilon)$ be a Hom-Lie color Hom-algebra and
$\alpha :\mathcal{A}\rightarrow \mathcal{A}$ be a Hom-Lie color Hom-algebras
morphism. Define $[\cdot,\cdot]_{\alpha}:A \times A\rightarrow A$ for all $x, y\in A$, by
$[x,y]_\alpha =\alpha ([x,y])$.
Then, $\mathcal{A}_\alpha=(A, [\cdot,\cdot]_{\alpha},\varepsilon, \alpha)$ is a Hom-Lie color Hom-algebra called the $\alpha$-twist or Yau twist of $(A, [\cdot,\cdot],\varepsilon)$.
\end{prop}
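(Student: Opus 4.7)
The plan is to verify directly the two defining axioms of a Hom-Lie color Hom-algebra for $(A, [\cdot,\cdot]_{\alpha}, \varepsilon, \alpha)$, namely the $\varepsilon$-skew symmetry and the $\varepsilon$-Hom-Jacobi identity, by transporting the corresponding properties of $(A, [\cdot,\cdot], \varepsilon)$ through the morphism $\alpha$.

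First I would check $\varepsilon$-skew symmetry. For any $x,y \in \mathcal{H}(A)$, applying the even linear map $\alpha$ to the identity $[x,y] = -\varepsilon(x,y)[y,x]$ and using linearity yields
\begin{equation*}
[x,y]_{\alpha} \;=\; \alpha([x,y]) \;=\; -\varepsilon(x,y)\,\alpha([y,x]) \;=\; -\varepsilon(x,y)\,[y,x]_{\alpha},
\end{equation*}
which is the desired $\varepsilon$-skew symmetry for the twisted bracket.

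Next I would verify the $\varepsilon$-Hom-Jacobi identity for the new structure. Fix homogeneous $x,y,z$ and expand
\begin{align*}
\circlearrowleft_{x,y,z} \varepsilon(z,x)\,[\alpha(x), [y,z]_{\alpha}]_{\alpha}
  &= \circlearrowleft_{x,y,z} \varepsilon(z,x)\,\alpha\bigl([\alpha(x), \alpha([y,z])]\bigr) \\
  &= \alpha\!\left(\circlearrowleft_{x,y,z} \varepsilon(z,x)\,[\alpha(x),[\alpha(y),\alpha(z)]]\right),
\end{align*}
where the first equality uses the definition of $[\cdot,\cdot]_{\alpha}$ and the second uses the morphism property $\alpha([y,z]) = [\alpha(y),\alpha(z)]$ together with the linearity of $\alpha$. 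Because $\alpha$ is even, $\varepsilon(\alpha(z),\alpha(x)) = \varepsilon(z,x)$, so the bracketed cyclic sum is exactly the original $\varepsilon$-Hom-Jacobi identity evaluated at the homogeneous triple $(\alpha(x),\alpha(y),\alpha(z))$, and therefore vanishes in $(A,[\cdot,\cdot],\varepsilon)$. Applying $\alpha$ to $0$ yields $0$, establishing the Hom-Jacobi identity for $\mathcal{A}_{\alpha}$.

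The argument is essentially routine once the morphism property is invoked; the only point that needs care is the bookkeeping of the commutation factor, which goes through because $\alpha$ is degree-preserving, ensuring $\varepsilon(\alpha(u),\alpha(v)) = \varepsilon(u,v)$ for all homogeneous $u,v$. No further structural input beyond the hypotheses is required, so I do not anticipate a genuine obstacle.
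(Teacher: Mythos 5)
Your proof is correct and is exactly the standard Yau-twist verification: the paper states this proposition without proof (it is imported from the cited reference), but your argument matches the computation the paper itself carries out for the analogous Theorem~\ref{twist-Gelfand-Dorfman}, namely applying the morphism property $\alpha([x,y])=[\alpha(x),\alpha(y)]$ to reduce the twisted $\varepsilon$-Hom-Jacobi identity to the original one evaluated at $(\alpha(x),\alpha(y),\alpha(z))$, with the commutation factor unchanged because $\alpha$ is even. No issues.
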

\begin{defn}
Let $(A,[\cdot,\cdot],\varepsilon,\alpha)$ be a Hom-Lie color Hom-algebra, $(V,\beta)$ is a pair of $\Gamma$-graded linear space $V$ and an even linear map $\beta:V\rightarrow V$. Let $ \rho: A \rightarrow End(V) $ an even linear map. The triple $(\rho, \beta, V)$ is called a representation of $(A, [\cdot,\cdot], \varepsilon,\alpha)$ if for all $ x, y \in  \mathcal{H}(A), v \in \mathcal{H}(V) $,
\begin{align}
\label{Cond1 Hom Lie}
\rho([x,y])\beta(v)=\rho(\alpha(x))\rho(y)v-\varepsilon(x,y)\rho(\alpha(y))\rho(x)v. \end{align}
\end{defn}
\begin{prop}\label{semi direct Hom-Lie}
Let $(\rho, \beta, V)$ is a representation of a Hom-Lie color Hom-algebra $( A, [\cdot,\cdot],\varepsilon, \alpha)$. Then the direct sum of $\Gamma$-graded linear spaces, $$ A \oplus V=\bigoplus_{\gamma\in\Gamma}(A \oplus V)_\gamma=\bigoplus_{\gamma\in\Gamma}(A_\gamma \oplus V_\gamma),$$  is turned into a Hom-Lie color Hom-algebra by defining multiplication in $A \oplus V $ for all $  X_1=x_{1}+v_1\in A_{\gamma_1}\oplus V_{\gamma_1},X_2=x_{2}+v_2\in A_{\gamma_2}\oplus V_{\gamma_2}$ by
\begin{eqnarray}\label{operationsemidirectLie1}
[x_{1} + v_{1},x_{2} + v_{2}]' &=&[ x_{1} , x_{2}] + \rho(x_{1})v_{2} -\varepsilon(v_1,x_2) \rho(x_{2})v_{1},\\
\label{operationsemidirectLie2}
(\alpha\oplus\beta)(x_{1} + v_{1})&=&\alpha(x_{1}) + \beta(v_{1}).
\end{eqnarray}
\end{prop}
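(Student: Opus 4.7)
The plan is to verify the two defining axioms of a Hom-Lie color Hom-algebra, namely $\varepsilon$-skew symmetry and the $\varepsilon$-Hom-Jacobi identity, on $A\oplus V$ with the bracket $[\cdot,\cdot]'$ from \eqref{operationsemidirectLie1} and the twisting map $\alpha\oplus\beta$ from \eqref{operationsemidirectLie2}. Throughout, the key point to exploit is Remark \ref{remarkepsilon}, which allows us to freely identify $\varepsilon(X_i,X_j)=\varepsilon(x_i,x_j)=\varepsilon(v_i,x_j)=\varepsilon(x_i,v_j)=\varepsilon(v_i,v_j)$ whenever $X_i=x_i+v_i\in A_{\gamma_i}\oplus V_{\gamma_i}$.

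First, I would check $\varepsilon$-skew symmetry. Starting from
\[
[X_1,X_2]'=[x_1,x_2]+\rho(x_1)v_2-\varepsilon(v_1,x_2)\rho(x_2)v_1,
\]
and computing $-\varepsilon(X_1,X_2)[X_2,X_1]'$ similarly, the $A$-component matches by the $\varepsilon$-skew symmetry in $A$, and the two $V$-components are interchanged up to the appropriate $\varepsilon$-factor; this is a short sign-tracking argument relying solely on Remark \ref{remarkepsilon} and $\varepsilon(x_1,x_2)\varepsilon(x_2,x_1)=1$.

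Second, and this is the main body of the proof, I would verify the $\varepsilon$-Hom-Jacobi identity. Writing $X_i=x_i+v_i$ and expanding $[X_2,X_3]'=[x_2,x_3]+\rho(x_2)v_3-\varepsilon(v_2,x_3)\rho(x_3)v_2$, one further expands
\[
[(\alpha\oplus\beta)X_1,[X_2,X_3]']'=[\alpha(x_1),[x_2,x_3]]+\rho(\alpha(x_1))\bigl(\rho(x_2)v_3-\varepsilon(x_2,x_3)\rho(x_3)v_2\bigr)-\varepsilon(x_1,x_2+x_3)\rho([x_2,x_3])\beta(v_1),
\]
and then takes the cyclic sum weighted by $\varepsilon(X_3,X_1)$. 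The $A$-valued piece is exactly the cyclic sum $\circlearrowleft\varepsilon(x_3,x_1)[\alpha(x_1),[x_2,x_3]]$, which vanishes by the $\varepsilon$-Hom-Jacobi identity in $A$. For the $V$-valued piece, I would collect terms by the variable $v_i$ they act on; for instance, the terms depending on $v_1$ come from the second and third cyclic shifts plus one term from the first shift, and after collapsing the $\varepsilon$-factors via $\varepsilon(x_3,x_1)\varepsilon(x_1,x_2+x_3)=\varepsilon(x_1,x_2)$ one finds precisely
\[
\varepsilon(x_1,x_2)\Bigl(\rho(\alpha(x_2))\rho(x_3)v_1-\varepsilon(x_2,x_3)\rho(\alpha(x_3))\rho(x_2)v_1-\rho([x_2,x_3])\beta(v_1)\Bigr),
\]
which is zero by the representation identity \eqref{Cond1 Hom Lie}. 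By cyclic symmetry the same argument disposes of the $v_2$- and $v_3$-terms.

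The routine aspect is the algebra itself; the only genuine obstacle is the careful bookkeeping of the commutation factors. The cleanest way to handle this is systematically: after every expansion, immediately rewrite each $\varepsilon(v_i,x_j)$ as $\varepsilon(x_i,x_j)$ via Remark \ref{remarkepsilon}, use property (3) of the bicharacter to split sums of degrees, and use property (1) to cancel $\varepsilon(a,b)\varepsilon(b,a)=1$. Once this discipline is maintained, the reduction of the $v_1$-, $v_2$-, and $v_3$-components to a single common prefactor times the left-hand side of \eqref{Cond1 Hom Lie} is immediate, and the proposition follows.
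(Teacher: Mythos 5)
Your proof is correct. Note that the paper actually states Proposition \ref{semi direct Hom-Lie} without proof, so there is no in-paper argument to compare against; your verification follows exactly the template the authors use for the analogous semidirect-product results (Propositions \ref{prop:semidirect.comm.Hom.ass.color.algeb} and \ref{semi direct Hom Novikov}): expand, normalize all commutation factors via Remark \ref{remarkepsilon}, and reduce the $V$-components to the representation axiom. Your key cancellations, e.g.\ $\varepsilon(x_3,x_1)\varepsilon(x_1,x_2+x_3)=\varepsilon(x_1,x_2)$ collapsing the $v_1$-terms to a prefactor times the left-hand side of \eqref{Cond1 Hom Lie}, check out, as does the skew-symmetry computation.
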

The Hom-Lie color Hom-algebra constructed in previous Proposition is denoted
by $(A \oplus V, [\cdot,\cdot]', \varepsilon,\alpha + \beta)$ or $A \ltimes_{\rho, \alpha, \beta} V.$
\begin{ex}
Let $(A,[\cdot,\cdot],\varepsilon,\alpha)$ be a Hom-Lie algebra. Then $(ad,\alpha,A)$ is a representation of $(A,[\cdot,\cdot],\varepsilon,\alpha)$, where $ad(x)y=[x, y]$ for all $x,y\in \mathcal{H}(A)$, called the adjoint representation of $(A,[\cdot,\cdot],\varepsilon,\alpha)$.
\end{ex}
Now, we introduce the notion of matched pair of Hom-Lie color Hom-algebra
\begin{prop}\label{matched Lie}
Suppose that $(A,[\cdot,\cdot]_A,\varepsilon,\alpha)$ and $(B,[\cdot,\cdot]_{B},\varepsilon,\beta)$ are Hom-Lie color Hom-algebras, and there are even linear maps $\rho_A:A\rightarrow End(B)$
and $\rho_B:B\rightarrow End(A)$ such that $(\rho_A,\beta,B)$ is a representation of $A$ and $(\rho_B,\alpha,A)$ is a representation of $B$ satisfying for any $x,y\in \mathcal{H}(A),~a,b\in \mathcal{H}(B)$,
\begin{eqnarray}
\begin{array}{r}
\varepsilon(x,a)\big(\rho_A(\rho_B(a)x)\beta(b)-[\beta(a),\rho_A(x)b]_B\big)+\varepsilon(a+x,b)\big([\beta(b),\rho_A(x)a]_B\\
-\rho_A(\rho_B(b)x)\beta(a)\big)+\rho_A(\alpha(x))([a,b]_B)=0,
\end{array}
\\[0.2cm]
\begin{array}{r}
\varepsilon(a,x)\big(\rho_B(\rho_A(x)a)\alpha(y)-[\alpha(x),\rho_B(a)y]_A\big)+\varepsilon(x+a,y)\big([\alpha(y),\rho_B(a)x]_A\nonumber\\
-\rho_B(\rho_A(y)a)\alpha(x)\big)+\rho_B(\beta(a))([x,y]_A)=0.
\end{array}
\end{eqnarray}
Then, $(A,B,\rho_A,\beta,\rho_B,\alpha)$ is called a matched pair of
Hom-Lie color Hom-algebras. In this case, there is a Hom-Lie color Hom-algebra structure on the linear
space of the underlying $\Gamma$-graded linear spaces of $A$ and $B$,
$$ A \oplus B=\bigoplus_{\gamma\in\Gamma}(A \oplus B)_\gamma=\bigoplus_{\gamma\in\Gamma}(A_\gamma \oplus B_\gamma),$$
given for all $x+a\in A_{\gamma_1}\oplus B_{\gamma_1}, y+b\in A_{\gamma_2}\oplus B_{\gamma_2}$ by
\begin{eqnarray}
&&\begin{array}{ll}
[x+a,y+b]= & [x,y]_A+\rho_A(x)b-\varepsilon(a,y)\rho_A(y)a \\
& \hspace{1cm}+ [a,b]_B+\rho_B(a)y-\varepsilon(x,b)\rho_B(b)x,
\end{array} \\
&&(\alpha\oplus\beta)(x + a)=\alpha(x) + \beta(a).
\end{eqnarray}
\end{prop}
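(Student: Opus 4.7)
The plan is to verify the two axioms of a Hom-Lie color Hom-algebra on $A\oplus B$ equipped with the proposed bracket and twisting map $\alpha+\beta$, following the same template used in Propositions~\ref{prop:matched.pairs.comm.Hom-ass.coloralgeb} and~\ref{thm:matchedpairs}. Let $X_i=x_i+a_i\in A_{\gamma_i}\oplus B_{\gamma_i}$ for $i=1,2,3$. Throughout, Remark~\ref{remarkepsilon} will be used to replace every factor of the form $\varepsilon(x_i,x_j)$, $\varepsilon(x_i,a_j)$, or $\varepsilon(a_i,a_j)$ by $\varepsilon(X_i,X_j)$, which is what makes the bicharacter multiplicativity line up cleanly.

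For the $\varepsilon$-skew symmetry, I would expand $[X_1,X_2]+\varepsilon(X_1,X_2)[X_2,X_1]$ and split it into its $A$- and $B$-components. The pure brackets $[x_1,x_2]_A$ and $[a_1,a_2]_B$ cancel immediately by the $\varepsilon$-skew symmetry of $[\cdot,\cdot]_A$ and $[\cdot,\cdot]_B$. The four remaining cross-terms involving $\rho_A$ and $\rho_B$ regroup into two cancelling pairs once one rewrites $\varepsilon(a_1,x_2)$ and $\varepsilon(x_1,a_2)$ via Remark~\ref{remarkepsilon}.

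For the $\varepsilon$-Hom-Jacobi identity, I would expand the cyclic sum
\[
\mathcal{J}(X_1,X_2,X_3)\;:=\;\circlearrowleft_{X_1,X_2,X_3}\varepsilon(X_3,X_1)\bigl[(\alpha+\beta)X_1,\,[X_2,X_3]\bigr]
\]
and project onto $A$ and $B$ separately. The $A$-component breaks into three blocks. The first block is the cyclic sum $\circlearrowleft\varepsilon(x_3,x_1)[\alpha(x_1),[x_2,x_3]_A]_A$, which vanishes by $\varepsilon$-Hom-Jacobi in $A$. The second block collects the terms in which $\rho_B$ acts on an $A$-bracket; in each cyclic slot this block assembles into $\rho_B([a_i,a_j]_B)\alpha(x_k)-\rho_B(\beta(a_i))\rho_B(a_j)x_k+\varepsilon(a_i,a_j)\rho_B(\beta(a_j))\rho_B(a_i)x_k$, which vanishes because $(\rho_B,\alpha,A)$ is a representation, i.e.\ by~\eqref{Cond1 Hom Lie} applied in $B$. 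The third block consists of the genuinely mixed terms: after using Remark~\ref{remarkepsilon} and the bicharacter multiplicativity to normalise signs, these terms group precisely into three cyclic instances of the second compatibility identity of the proposition. The $B$-component is handled symmetrically, producing three cyclic instances of the first compatibility identity together with a Hom-Jacobi piece for $B$ and a representation-block for $(\rho_A,\beta,B)$.

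The main obstacle will be neither axiom verification in isolation nor any deep computation, but rather the sign bookkeeping: organising the nine terms appearing in each of the three cyclic slots so that they align with the exact $\varepsilon$-prefactors in the two compatibility hypotheses. Once the substitutions from Remark~\ref{remarkepsilon} are performed systematically and the expansions are arranged by the ``who sits where'' pattern (which of $x,y,z$ is alone on the left, which of $a,b,c$ is inside the inner bracket), the three mixed-term blocks in each component match the displayed compatibility identities term by term, and $\mathcal{J}(X_1,X_2,X_3)=0$ follows.
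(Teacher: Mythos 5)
Your proposal is correct and follows exactly the template the paper uses for its analogous matched-pair verifications (compare the proof of Proposition \ref{prop:matched.pairs.comm.Hom-ass.coloralgeb}): expand the bracket on $A\oplus B$, normalise all $\varepsilon$-factors via Remark \ref{remarkepsilon}, kill the pure blocks by the Hom-Jacobi identities and the representation conditions \eqref{Cond1 Hom Lie}, and match the remaining mixed terms against the two displayed compatibility identities; the paper itself states Proposition \ref{matched Lie} without proof, so there is nothing for your argument to diverge from. One small wording slip worth fixing: you describe the ``second block'' as the terms where $\rho_B$ acts on an $A$-bracket, but the formula you display (correctly) is $\rho_B$ of a $B$-bracket acting on $\alpha(x_k)$ --- the term $\rho_B(\beta(a))[x,y]_A$ in fact belongs to the compatibility block, as your own description of the third block confirms.
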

\section{Admissible Hom-Novikov-Poisson color Hom-algebras}
\label{sec:admissHomNovikovcoloralgebs}
In this Section, we recall the main result of Hom-Novikov-Poisson color Hom-algebras in \cite{attan:SomeconstructionsofcolorHom-Novikov-Poissonalgeb2019} and we introduce their notions of bimodules and matched pairs. Next, we introduce the dfinition of admissible Hom-Novikov-Poisson color Hom-algebras and we give some explicit constructions. Finally, we  show  that  the  much  larger  class  of  admissible  Hom-Novikov-Poisson  color Hom-algebras is also closed under tensor products
\subsection{Constructions and bimodules of (admissible) Hom-Novikov-Poisson color Hom-algebras}
\begin{defn}[\cite{attan:SomeconstructionsofcolorHom-Novikov-Poissonalgeb2019}]\label{def Hom-Nov Poss}
Hom-Novikov-Poisson color Hom-algebras are defined as quintuples $(A,\cdot,\diamond,\varepsilon,\alpha)$ consisting of an $\varepsilon$-commutative Hom-associative color Hom-algebra $(A,\cdot,\varepsilon,\alpha)$ and a Hom-Novikov color Hom-algebra $(A,\diamond,\varepsilon,\alpha)$ obeying for $x,y,z\in \mathcal{H}(A)$,
\begin{align}\label{HomNovPoisscoloralg1}
    (x\cdot y)\diamond\alpha(z) &=\varepsilon(y,z)(x\diamond z)\cdot\alpha(y),\\
    \label{HomNovPoisscoloralg2}
    (x\diamond y)\cdot \alpha(z) -\alpha(x)\diamond(y\cdot z) &=\varepsilon(x,y)\big( (y\diamond x)\cdot\alpha(z)-\alpha(y)\diamond(x\cdot z)\big).
\end{align}
A Hom-Novikov-Poisson color Hom-algebra is called multiplicative if
the linear map $\alpha:A\rightarrow A$ is multiplicative with respect to $\cdot$ and $\diamond$, that is,
for all $x,y \in \mathcal{H}(A)$,
$$
\alpha (x \cdot y) =\alpha (x) \cdot \alpha (y), \quad \alpha (x\diamond y) =\alpha (x) \diamond \alpha (y).
$$
\end{defn}
\begin{rmk}
Hom-Novikov-Poisson color Hom-algebras contain Novikov-Poisson color Hom-algebras, Hom-Novikov-Poisson algebras and Novikov-Poisson algebras for special choices of the twising map and grading group.
\begin{enumerate}
\item
When $\alpha = id$, we get Novikov-Poisson color Hom-algebra.
\item
When $\Gamma = \{e\}$ and $\alpha \neq id$, we get Hom-Novikov-Poisson algebra \cite{Yau:homnovikovpoissonalg}.
\item
When $\Gamma = \{e\}$ and $\alpha= id$, we get Novikov-Poisson algebra \cite{xu1,xu2}.
 \end{enumerate}
\end{rmk}
\begin{ex}
Let $A = A_{0} \oplus A_{1} =< e_1,e_2> \oplus <e_3,e_4>$
 be a $4$-dimensional superspace. Then
$(A,\cdot,\diamond,\varepsilon,\alpha)$ is a Hom-Novikov-Poisson color Hom-algebra with
\begin{align*}
&\text{the bicharacter} \quad \varepsilon(i,j)=(-1)^{ij}, \\
&\text{the multiplications:}
\begin{array}[t]{lllll}
e_2\cdot e_2=\lambda_1 e_1, \quad e_2\cdot e_4=e_4\cdot e_2=\lambda_2 e_3,\quad\lambda_i\in\mathbb{K},\\
e_2\diamond e_4=\mu_2e_3, \quad e_4\diamond e_2=\mu_3e_3,\quad
e_4\diamond e_4=\mu_4 e_1,\quad \mu_i\in\mathbb{K},
\end{array} \\
&\text{the even linear map $\alpha: A \rightarrow A$ defined by}
\begin{array}[t]{l}
\alpha(e_1)=2e_1,\quad \alpha(e_2)=e_2-e_1,\\
\alpha(e_3)=-e_4,\quad \alpha(e_4)=e_3.
\end{array}
\end{align*} \end{ex}
\begin{defn}\label{morphHom-Nov-Poisscoloralgebra}
Let $( A, \cdot, \diamond, \alpha)$ and $( A', \cdot',  \diamond', \alpha')$ be Hom-Novikov-Poisson color Hom-algebras. A linear map of degree zero $f:  A\rightarrow  A'$ is a Hom-Novikov-Poisson color Hom-algebra morphism if
\begin{eqnarray*}
\cdot'\circ(f\otimes f)= f\circ\cdot,\quad \diamond'\circ(f\otimes f)= f\circ\diamond \mbox{ and } f\circ\alpha= \alpha'\circ f.
\end{eqnarray*}
\end{defn}
\begin{thm}[\cite{attan:SomeconstructionsofcolorHom-Novikov-Poissonalgeb2019}]\label{twist-Hom-Nov-Poiss}
Let $\mathcal{A}=(A, \cdot ,\diamond,\varepsilon)$ be a Hom-Novikov-Poisson color Hom-algebra and
$\alpha :\mathcal{A}\rightarrow \mathcal{A}$ be a Hom-Novikov-Poisson color Hom-algebras
morphism. Define $\cdot_{\alpha}, \ \diamond_\alpha:A \times A\rightarrow A$ for all $x, y\in \mathcal{H}(A)$, by
$x\cdot _{\alpha}y =\alpha (x\cdot y)$ and $x\diamond_{\alpha}y =\alpha(x\diamond  y)$.
Then, $\mathcal{A}_\alpha=(A_{\alpha}=A, \cdot _{\alpha},\diamond_\alpha,\varepsilon, \alpha)$ is a Hom-Novikov-Poisson color Hom-algebra called the $\alpha$-twist or Yau twist of $(A, \cdot ,\diamond,\varepsilon)$.
\end{thm}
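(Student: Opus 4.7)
The plan is to verify that $(A,\cdot_\alpha,\diamond_\alpha,\varepsilon,\alpha)$ satisfies the three ingredients of Definition \ref{def Hom-Nov Poss}: $\varepsilon$-commutative Hom-associativity of $\cdot_\alpha$, the Hom-Novikov identities for $\diamond_\alpha$, and the two compatibility axioms \eqref{HomNovPoisscoloralg1} and \eqref{HomNovPoisscoloralg2}. The organising principle of the Yau-twist construction is that, since $\alpha$ is assumed to be a morphism for both products, the identities $\alpha(x\cdot y)=\alpha(x)\cdot\alpha(y)$ and $\alpha(x\diamond y)=\alpha(x)\diamond\alpha(y)$ let us collect the occurrences of $\alpha$ into a single outer power and so reduce every twisted identity to the corresponding untwisted relation already available in $\mathcal{A}$.

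First I would dispose of the two single-product axioms. The Hom-Novikov identities for $\diamond_\alpha$ follow by applying Proposition \ref{prop:twist.Hom.Nov.color.algeb} directly, since $\alpha$ is in particular a morphism of the Novikov color Hom-algebra $(A,\diamond,\varepsilon)$. An entirely parallel argument shows that $(A,\cdot_\alpha,\varepsilon,\alpha)$ is $\varepsilon$-commutative Hom-associative: $\varepsilon$-commutativity descends immediately from that of $\cdot$, and Hom-associativity reads
\[
\alpha(x)\cdot_\alpha(y\cdot_\alpha z)=\alpha^{2}\bigl(x\cdot(y\cdot z)\bigr)=\alpha^{2}\bigl((x\cdot y)\cdot z\bigr)=(x\cdot_\alpha y)\cdot_\alpha\alpha(z),
\]
using associativity of the original $\cdot$.

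The substantive step is the two compatibility identities. For \eqref{HomNovPoisscoloralg1}, unfolding the definitions and pushing $\alpha$ through both products yields
\begin{align*}
(x\cdot_\alpha y)\diamond_\alpha\alpha(z) &= \alpha\bigl(\alpha(x\cdot y)\diamond\alpha(z)\bigr)=\alpha^{2}\bigl((x\cdot y)\diamond z\bigr),\\
\varepsilon(y,z)\,(x\diamond_\alpha z)\cdot_\alpha\alpha(y) &= \varepsilon(y,z)\,\alpha\bigl(\alpha(x\diamond z)\cdot\alpha(y)\bigr)=\alpha^{2}\bigl(\varepsilon(y,z)\,(x\diamond z)\cdot y\bigr),
\end{align*}
and the two right-hand sides agree because the untwisted relation $(x\cdot y)\diamond z=\varepsilon(y,z)(x\diamond z)\cdot y$ holds in $\mathcal{A}$. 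Identity \eqref{HomNovPoisscoloralg2} is handled by exactly the same mechanism: each of the four summands pulls out a global factor of $\alpha^{2}$, and the bracket that remains inside is precisely the untwisted compatibility already available in $\mathcal{A}$.

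The main obstacle is purely notational rather than conceptual: one must keep track of the order in which $\alpha$ is applied so that the $\varepsilon$-signs attach to the correct homogeneous elements. Because $\alpha$ is even, the values $\varepsilon(x,y)$ are preserved under the twist and commute freely with $\alpha$, so no new sign subtleties arise beyond those already present in $\mathcal{A}$. In the multiplicative case the proof is complete; in the non-multiplicative case the same reductions work verbatim once one observes that only the morphism property of $\alpha$, not its multiplicativity relative to some pre-existing twist, is invoked in the manipulations above.
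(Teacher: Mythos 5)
Your proof is correct; the paper itself gives no proof of Theorem \ref{twist-Hom-Nov-Poiss} (it is cited from the reference), but your argument is exactly the standard Yau-twist computation the paper carries out for the analogous statements (e.g.\ Theorems \ref{twist-admissible Hom-Nov-Poiss} and \ref{twist-Gelfand-Dorfman}): use multiplicativity of $\alpha$ to factor each side as $\alpha^{2}$ applied to the corresponding untwisted identity. The only blemish is the closing remark contrasting a ``multiplicative'' and ``non-multiplicative'' case --- being a morphism for both products \emph{is} the multiplicativity you need, so there is no second case --- but this does not affect the validity of the argument.
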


\begin{defn}
Let $(A,\cdot,\diamond,\varepsilon,\alpha)$ be a Hom-Novikov-Poisson color Hom-algebra. A bimodule of $(A,\cdot,\diamond,\varepsilon,\alpha)$ is a quintuple $(s,l,r,\beta,V)$ such that $(s,\beta,V)$ is a bimodule of the $\varepsilon$-commutative Hom-associative color Hom-algebra  $(A,\cdot,\varepsilon,\alpha)$ and $(l,r,\beta,V)$ is a bimodule of the Hom-Novikov color Hom-algebra $(A,\diamond,\varepsilon,\alpha)$ and  satisfying, for all $ x, y \in  \mathcal{H}(A), v \in \mathcal{H}(V) $,
\begin{align}
\label{Condit1}
& l(x\cdot y)\beta(v)=\varepsilon(x,y)s(\alpha(y))l(x)v,\\*[0,2cm]
\label{Condit2}
& r(\alpha(y))s(x)v=\varepsilon(v,y)s(x\diamond y)\beta(v),\\*[0,2cm]
\label{Condit3}
& r(\alpha(y))s(x)v=s(\alpha(x))r(y)v,\\*[0,2cm]
\label{Condit4}
& s(x\diamond y)\beta(v)-l(\alpha(x))s(y)v=\varepsilon(x,y)\big(s(y\diamond x)\beta(v)-l(\alpha(y))s(x)v\big),\\*[0,2cm]
\label{Condit5}
& \begin{array}{l}
\varepsilon(x+v,y)\big(s(\alpha(y))l(x)v-\varepsilon(x,v)s(\alpha(y))r(x)v\big)=\varepsilon(v,y)l(\alpha(x))s(y)v\\
\hspace{9cm} -\varepsilon(x,v)r(x\cdot y)\beta(v).
\end{array}
\end{align}
\end{defn}
\begin{prop}\label{pro2:dirsumHomNovikovPoissoncolorHomalg}
 Let $A\oplus V=\bigoplus_{\gamma\in\Gamma}(A \oplus V)_\gamma=\bigoplus_{\gamma\in\Gamma}(A_\gamma \oplus V_\gamma)$, the direct sum of $\Gamma$-graded linear spaces. Then, $(A\oplus V,\cdot',\diamond',\varepsilon,\alpha+\beta)$ is a Hom-Novikov-Poisson color Hom-algebra, where $(A\oplus V,\cdot',\varepsilon,\alpha+\beta)$ is the semi-direct product $\varepsilon$-commutative Hom-associative color Hom-algebra $A\ltimes_{s,\alpha,\beta} V$ and $(A\oplus V,\diamond',\varepsilon,\alpha+\beta)$ is the semi-direct product Hom-Novikov color Hom-algebra $A\ltimes_{l,r,\alpha,\beta} V$.
\end{prop}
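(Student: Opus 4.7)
The plan is to reduce the claim to verifying only the two mixed compatibility identities \eqref{HomNovPoisscoloralg1} and \eqref{HomNovPoisscoloralg2} on $A\oplus V$, since the remaining structure has already been built. By Proposition \ref{prop:semidirect.comm.Hom.ass.color.algeb} applied to the bimodule $(s,\beta,V)$, the quintuple $(A\oplus V,\cdot',\varepsilon,\alpha+\beta)=A\ltimes_{s,\alpha,\beta}V$ is an $\varepsilon$-commutative Hom-associative color Hom-algebra, and by Proposition \ref{semi direct Hom Novikov} applied to $(l,r,\beta,V)$, the quintuple $(A\oplus V,\diamond',\varepsilon,\alpha+\beta)=A\ltimes_{l,r,\alpha,\beta}V$ is a Hom-Novikov color Hom-algebra. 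Hence it suffices to check that for homogeneous triples $X_i=x_i+v_i\in A_{\gamma_i}\oplus V_{\gamma_i}$, $i=1,2,3$, the identities
\[
(X_1\cdot' X_2)\diamond'(\alpha+\beta)(X_3)=\varepsilon(x_2,x_3)(X_1\diamond' X_3)\cdot'(\alpha+\beta)(X_2)
\]
and
\[
(X_1\diamond' X_2)\cdot'(\alpha+\beta)(X_3)-(\alpha+\beta)(X_1)\diamond'(X_2\cdot' X_3)
=\varepsilon(x_1,x_2)\bigl((X_2\diamond' X_1)\cdot'(\alpha+\beta)(X_3)-(\alpha+\beta)(X_2)\diamond'(X_1\cdot' X_3)\bigr)
\]
hold, with the $\varepsilon$-arguments collapsed via Remark \ref{remarkepsilon}.

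First I would expand both sides of \eqref{HomNovPoisscoloralg1}. The pure $A$-component on each side balances by \eqref{HomNovPoisscoloralg1} applied in $A$. The $V$-component on the left produces four summands of the shape $l(x_1\cdot x_2)\beta(v_3)$, $r(\alpha(x_3))s(x_1)v_2$, and a $\varepsilon(v_1,x_2)$-twisted term $r(\alpha(x_3))s(x_2)v_1$; on the right appear $s(x_1\diamond x_3)\beta(v_2)$, $s(\alpha(x_2))l(x_1)v_3$, $s(\alpha(x_2))r(x_3)v_1$, each carrying the prescribed commutation factor. I would then pair the terms and kill them via the bimodule-compatibility rules: the $v_3$-terms match through \eqref{Condit1}, the $v_2$-terms through \eqref{Condit2}, and the $v_1$-terms through \eqref{Condit3}, each used once.

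Next I would treat \eqref{HomNovPoisscoloralg2} in the same style. The pure $A$-component on both sides cancels by \eqref{HomNovPoisscoloralg2} applied in $A$. For the $V$-component, the symmetry between the two sides groups the summands into three pairs: terms in $v_3$ are absorbed by \eqref{Condit4}; terms in $v_2$ and $v_1$ are absorbed by \eqref{Condit5} (with its left- and right-hand manifestations corresponding to swapping $x_1\leftrightarrow x_2$). All remaining commutation factors rearrange using Remark \ref{remarkepsilon} into the required global $\varepsilon(x_1,x_2)$.

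The main obstacle is purely bookkeeping: keeping track of the $\varepsilon$-signs that arise each time an element of $V$ is moved past an element of $A$, since the bimodule conditions \eqref{Condit1}--\eqref{Condit5} are stated with specific sign patterns which must be matched exactly. The strategy I would use to control this is to fix an ordering $v_1,v_2,v_3$ from the start, expand everything keeping $v_i$'s on the right of $A$-factors, and invoke Remark \ref{remarkepsilon} only at the final comparison step; once this is done, each of the five conditions \eqref{Condit1}--\eqref{Condit5} is used exactly once (plus the ambient identities in $A$), and the two Hom-Novikov-Poisson axioms drop out simultaneously.
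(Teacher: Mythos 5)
Your proposal is correct and follows essentially the same route as the paper: reduce to the two compatibility axioms via Propositions \ref{prop:semidirect.comm.Hom.ass.color.algeb} and \ref{semi direct Hom Novikov}, then expand and cancel the $A$-components by \eqref{HomNovPoisscoloralg1}--\eqref{HomNovPoisscoloralg2} and the $v_3$, $v_2$, $v_1$ groups by \eqref{Condit1}--\eqref{Condit5} exactly as you describe. The only quibble is your closing remark that each of \eqref{Condit1}--\eqref{Condit5} is used exactly once: in fact \eqref{Condit5} is invoked twice (once for the $v_2$-terms and once for the $v_1$-terms), as your own earlier pairing already indicates.
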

\begin{proof}
Let $(A,\cdot,\diamond,\varepsilon,\alpha)$ be a Hom-Novikov-Poisson color Hom-algebra, and $(s,l,r,\beta,V)$ be a bimodule. By Proposition \ref{prop:semidirect.comm.Hom.ass.color.algeb} and Proposition \ref{semi direct Hom Novikov},
$(A\oplus V,\cdot',\varepsilon,\alpha+\beta)$ is a $\varepsilon$-commutative Hom-associative color Hom-algebra, and
$(A\oplus V,\diamond',\varepsilon,\alpha+\beta)$ is a Hom-Novikov color Hom-algebra respectively.
Now, we show that the compatibility conditions \eqref{HomNovPoisscoloralg1}-\eqref{HomNovPoisscoloralg2} are satisfied.
For all $X_i=x_i+v_i\in A_{\gamma_i} \oplus V_{\gamma_i},\ i=1,2,3$ we have
\begin{align*}
&(X_1\cdot'X_2\big)\diamond'(\alpha+\beta) X_3-\varepsilon(X_2,X_3)(X_1\diamond' X_3)\cdot'(\alpha+\beta)X_2\\
&\quad=\big((x_1+v_1)\cdot'(x_2+v_2)\big)\diamond'(\alpha+\beta)(x_3+v_3)\\&\quad\quad-\varepsilon(X_2,X_3)\big((x_1+v_1)\diamond'(x_3+v_3)\big)\cdot'(\alpha+\beta)(x_2+v_2)\\
&\quad=\big(x_1\cdot x_2+s(x_1)v_2+\varepsilon(v_1,x_2)s(x_2)v_1\big)\diamond'(\alpha(x_3)+\beta(v_3))\\
&\quad\quad-\varepsilon(x_2,x_3)\big((x_1\diamond x_3+l(x_1)v_3+r(x_3)v_1\big)\cdot'(\alpha(x_2)+\beta(v_2))\\
&\quad=(x_1\cdot x_3)\diamond\alpha(x_3)+l(x_1\cdot x_2)\beta(v_3)+r(\alpha(x_3))s(x_1)v_2\\&\quad\quad+\varepsilon(v_1,x_2)r(\alpha(x_3))s(x_2)v_1-\varepsilon(x_2,x_3)\big(x_1\diamond x_3)\cdot\alpha(x_2)+s(x_1\diamond x_2)\beta(v_2)\big)\\
&\quad\quad-\varepsilon(x_1,x_2)\big(s(\alpha(x_2))l(x_1)v_3+s(\alpha(x_2))r(x_3)v_1\big)\\
&\quad=\underbrace{\Big((x_1\cdot x_2)\diamond\alpha(x_3)-\varepsilon(x_2,x_3)(x_1\diamond x_3)\cdot\alpha(x_2)\Big)}_{\text{$=0$ by \eqref{HomNovPoisscoloralg1}}}\\
&\quad\quad+\underbrace{\Big(l(x_1\cdot x_2)\beta(v_3)-\varepsilon(x_1,x_2)s(\alpha(x_2))l(x_1)v_3\Big)}_{\text{$=0$ by \eqref{Condit1}}}\\
&\quad\quad+\underbrace{\Big(r(\alpha(x_3))s(x_1)v_2-\varepsilon(x_2,x_3)s(x_1\diamond x_3)\beta(v_2)\Big)}_{\text{$=0$ by \eqref{Condit2} and Remark \ref{remarkepsilon}}}\\
&\quad\quad+\underbrace{\varepsilon(x_1,x_2)\Big(r(\alpha(x_3))s(x_2)v_1-s(\alpha(x_2))r(x_3)v_1\Big)}_{\text{$=0$ by \eqref{Condit3}}}=0,\\*[0,2cm]
&(X_1\diamond'X_2)\cdot'(\alpha+\beta)X_3-(\alpha+\beta)X_1\diamond'(X_2\cdot'X_3)\\
&-\varepsilon(X_1,X_2)\big((X_2\diamond'X_1)\cdot'(\alpha+\beta)X_3-(\alpha+\beta)X_2\diamond'(X_1\cdot'X_3)\big)\\
&\quad=\big((x_1+v_1)\diamond'(x_2+v_2)\big)\cdot'(\alpha+\beta)(x_3+v_3)\\&\quad\quad-(\alpha+\beta)(x_1+v_1)\diamond'\big((x_2+v_2)\cdot'(x_3+v_3)\big)\\
&\quad\quad-\varepsilon(X_1,X_2)\big((x_2+v_2)\diamond'(x_1+v_1)\big)\cdot'(\alpha+\beta)(x_3+v_3)\\
&\quad\quad-(\alpha+\beta)(x_2+v_2)\diamond'\big((x_1+v_1)\cdot'(x_3+v_3)\big)\\
&\quad=(x_1\diamond x_2)\alpha(x_3)+s(x_1\diamond x_2)\beta(v_3)+\varepsilon(x_1+x_2,v_3)s(\alpha(x_3))l(x_1)v_2\\
&\quad\quad+\varepsilon(x_1+x_2,v_3)s(\alpha(x_3))r(x_2)v_1-\alpha(x_1)\diamond(x_2\cdot x_3)\\
&\quad\quad-l(\alpha(x_1))s(x_2)v_3+\varepsilon(v_2,x_3)l(\alpha(x_1))s(x_3)v_2+r(x_2\cdot x_3)\beta(v_1)\\
&\quad\quad-\varepsilon(x_1,x_2)\Big(x_2\diamond x_1)\cdot\alpha(x_3)+s(x_2\diamond x_1)\beta(v_3)+\varepsilon(x_1+x_2,v_3)s(\alpha(x_3))l(x_2)v_1\\&\quad\quad+
\varepsilon(x_1+x_2,v_3)s(\alpha(x_3))r(x_1)v_2-\alpha(x_2)\diamond(x_1\cdot x_3)-l(\alpha(x_2))s(x_1)v_3\\
&\quad\quad+\varepsilon(v_1,x_3)l(\alpha(x_2))s(x_3)v_1+r(x_1\cdot x_3)\beta(v_2)\Big)\\
&\quad=\underbrace{\Big((x_1\diamond x_2)\cdot\alpha(x_3)-\alpha(x_2)\diamond(x_2\cdot x_3)} \\ &\hspace{4cm}-\underbrace{\varepsilon(x_1,x_2)\big((x_2\diamond x_1)\cdot\alpha(x_3)-\alpha(x_2)\diamond(x_1\cdot x_3))\big)\Big)}_{\text{$=0$ by \eqref{HomNovPoisscoloralg2}}}\\
&\quad\quad+\underbrace{\Big(s(x_1\diamond x_2)\beta(v_3)-l(\alpha(x_1))s(x_2)v_3}\\
&\hspace{4cm} -\underbrace{\varepsilon(x_1,x_2)(s(x_2\diamond x_1)\beta(v_3)-l(\alpha(x_2))s(x_1)v_3\Big)}_{\text{$=0$ by \eqref{Condit4}}}\\*[0,5cm]
&\quad\quad+\underbrace{\Big(\varepsilon(x_1+x_2,v_3)\big(s(\alpha(x_3))l(x_1)v_2-\varepsilon(x_1,x_2)s(\alpha(x_3))r(x_1)v_2\big)}\\&\hspace{5cm}-\underbrace{\varepsilon(x_1,x_2)r(x_1\cdot x_3)\beta(v_2)+\varepsilon(x_2,v_3)l(\alpha(x_1))s(x_3)v_2\Big)}_{\text{$=0$ by \eqref{Condit5} and Remark \ref{remarkepsilon}}}\\*[0,5cm]
&\quad\quad-\underbrace{\varepsilon(x_1,x_2)\Big(\varepsilon(x_1+x_2,v_3)\big(s(\alpha(x_3))l(\alpha(x_2))v_1-\varepsilon(x_2,x_1)s(\alpha(x_3))r(x_2)v_1\big)}\\
&\hspace{4cm}-\underbrace{\varepsilon(x_1,x_3)l(\alpha(x_2))s(x_3)v_1+\varepsilon(x_2,x_1)r(x_2\cdot x_3)\beta(v_1)\Big)}_{\text{$=0$ by \eqref{Condit5} and Remark \ref{remarkepsilon}}}=0.
\end{align*}
Hence, $(A\oplus V,\cdot',[\cdot,\cdot]',\varepsilon,\alpha+\beta)$ is a Hom-Novikov-Poisson color Hom-algebra.
\end{proof}
The Hom-Novikov-Poisson color Hom-algebra constructed in previous Proposition is denoted by $A \ltimes_{s,l,r, \alpha, \beta} V.$
\begin{exes}
\begin{enumerate}
\item
Let $(A,\cdot,\diamond,\varepsilon,\alpha)$ be a Hom-Novikov-Poisson color Hom-algebra, and let $S(x)y=x\cdot y=\varepsilon(x,y)y\cdot x$, $L(x)y=x\diamond y$ and $R(x,y)=y\diamond x$, for all $x,y\in \mathcal{H}(A)$.
Then, $(S,L,R,\alpha,A)$ is a bimodule of $(A,\cdot,\diamond,\varepsilon,\alpha)$, called the regular bimodule of $(A,\cdot,\diamond,\varepsilon,\alpha)$.
\item
If $f:\mathcal{A}=(A,\cdot_1,\diamond_1,\varepsilon,\alpha)\longrightarrow(A',\cdot_2,\diamond_2,\varepsilon,\beta)$ is a Hom-Novikov-Poisson color Hom-algebras morphism, then
$(s,l,r,\beta,A')$
becomes a bimodule of $\mathcal{A}$ via $f$, that is, for all $(x,y)\in \mathcal{H}(A)\times \mathcal{H}(A')$,
$s(x)y=f(x)\cdot_2 y,$ \ $l(x)y=f(x)\diamond_2 y,$ \ $r(x)y=y \diamond_2 f(x).$
\end{enumerate}
\end{exes}

\begin{thm}
Suppose that $\mathcal{A}=(A,\cdot_A,\diamond_A,\varepsilon,\alpha)$ and $\mathcal{B}=(B,\cdot_B,\diamond_{B},\varepsilon,\beta)$ are two Hom-Novikov-Poisson color Hom-algebras, and there are even linear maps $s_A,l_A,r_A:A\rightarrow End(B)$
and $s_B,l_B,r_B:B\rightarrow End(A)$ such that $A\bowtie^{s_A,\beta}_{s_B,\alpha}B$ is a matched pair of $\varepsilon$-commutative Hom-associative color Hom-algebras,   $A\bowtie^{l_A,r_A,\beta}_{l_B,r_B,\alpha}B$ is a matched pair of Hom-Novikov color Hom-algebras, and for all $x,y\in \mathcal{H}(A),~a,b\in \mathcal{H}(B)$, the following equalities hold:
\begin{align}
&\ r_A(\alpha(x))(a\cdot_B b)=\varepsilon(b,x)\big(r_A(x)a\cdot_B\beta(b)+s_A(l_B(a)x)\beta(b),\\*[0,2cm]
&\ l_A(s_B(a)x)\beta(b)+\varepsilon(a,x)s(x)a\diamond_B\beta(b)=\varepsilon(x,b)\big(\varepsilon(a+b,x)s(\alpha(x))(a\diamond_B b)\big),\\*[0,2cm]
&\begin{array}{l} \varepsilon(a,x)l_A(s_B(a)x)\beta(b)+s_A(x)a\diamond_B\beta(b)\\
\hspace{4cm} =\varepsilon(a,b)\big(l_A(x)b\cdot_B\beta(a)+s_A(r_B(b)x)\beta(a)\big),
\end{array}\\*[0,2cm]
&\begin{array}{l}
\varepsilon(a+b,x)s(\alpha(x))(a\diamond_B b)-\varepsilon(b,x)\beta(a)\diamond_B s_A(x)b-r_A(s_B(b)x)\beta(a)\\
\quad =\varepsilon(a,b)\big(\varepsilon(a+b,x)s(\alpha(x))(b\diamond_B a)
-\varepsilon(a,x)\beta(b)\diamond_B(s_A(x)a)\big.\\
\hspace{8cm} \big.-r_A(s_B(a)x)\beta(b)\big),
\end{array}\\
&
\begin{array}{l}
r_A(x)b\cdot_B\beta(b)+s_A(l_B(a)x)\beta(b)-\beta(a)\diamond_B(s_A(x)b)-\varepsilon(x,b)r_A(s_B(b)x)\beta(a) \\
\quad =\varepsilon(a,x)\big(l_A(x)b\cdot_B\beta(b)-s_A(r_B(a)x)\beta(b)-l_A(\alpha(x))(a\cdot_B b)\big),
\end{array} \\*[0,2cm]
&
\begin{array}{l}
l_A(x)a\cdot_B\beta(b)+s_A(r_B(a)x)\beta(b)-l_A(\alpha(x))(a\cdot_B b)\\
\hspace{3cm} =\varepsilon(x,a)\big(r_A(x)a\cdot_B\beta(b)+s_A(l_B(a)x)\beta(b)\big.\\
\hspace{4cm} \big.-\beta(a)\diamond_B(s_A(x)b)-\varepsilon(x,b)r_A(s_B(b)x)\beta(a)\big),
\end{array}
\\*[0,2cm]
&\ r_B(\beta(a))(x\cdot_A y)=\varepsilon(y,a)\big(r_B(a)x\cdot_A\alpha(y)+s_B(l_A(x)a)\alpha(y),\\*[0,2cm]
&\ l_B(s_A(x)a)\alpha(y)+\varepsilon(x,a)s(a)x\diamond_A\alpha(y)=\varepsilon(a,y)\big(\varepsilon(x+y,a)s(\beta(a))(x\diamond_A y)\big),\\*[0,2cm]
&
\begin{array}{l}
\varepsilon(x,a)l_B(s_A(x)a)\alpha(y)+s_B(a)x\diamond_A\alpha(y) \\
\hspace{4cm} =\varepsilon(x,y)\big(l_B(a)y\cdot_A\alpha(x)+s_B(r_A(y)a)\alpha(x)\big),
\end{array}
\\*[0,2cm]
&\begin{array}{l}
\varepsilon(x+y,a)s(\beta(a))(x\diamond_A y)-\varepsilon(y,a)\alpha(x)\diamond_A s_B(a)y-r_B(s_A(y)a)\alpha(x)\\
\quad =\varepsilon(x,y)\big(\varepsilon(x+y,a)s(\beta(a))(y\diamond_A x)-\varepsilon(x,a)\alpha(y)\diamond_A(s_B(a)x)\big.\\
\hspace{8cm} \big.-r_B(s_A(x)a)\alpha(y)\big),
\end{array} \\
&\begin{array}{l}
r_B(a)y\cdot_A\alpha(y)+s_B(l_A(x)a)\alpha(y)-\alpha(x)\diamond_A(s_B(a)y)-\varepsilon(a,y)r_B(s_A(y)a)\alpha(x)\nonumber\\
\quad
=\varepsilon(x,a)\big(l_B(a)y\cdot_A\alpha(y)-s_B(r_A(x)a)\alpha(y)-l_B(\beta(a))(x\cdot_A y)\big),
\end{array}
\\*[0,2cm]
&\begin{array}{l}
l_B(a)x\cdot_A\alpha(y)+s_B(r_A(x)a)\alpha(y)-l_B(\beta(a))(x\cdot_A y)\\
\hspace{3cm} =\varepsilon(a,x)\big(r_B(a)x\cdot_A\alpha(y)+s_B(l_A(x)a)\alpha(y)\big.\\
\hspace{4cm} \big.-\alpha(x)\diamond_A(s_B(a)y)-\varepsilon(a,y)r_B(s_A(y)a)\alpha(x)\big).
\end{array}
\end{align}
Then, $(A,B,s_A,l_A,r_A,\beta,s_B,l_B,r_B,\alpha)$ is called a matched pair of the Hom-Novikov-Poisson color Hom-algebras. In this case, on the direct sum
$A\oplus B$ of the underlying linear spaces of $\mathcal{A}$ and $\mathcal{B}$, there is a Hom-Novikov-Poisson color Hom-algebra structure which is given for any $ x+a\in A_{\gamma_1}\oplus B_{\gamma_1}, y+b\in A_{\gamma_2}\oplus B_{\gamma_2}$ by
\begin{eqnarray}
&&\begin{array}{lcl}
(x + a) \cdot(y + b)&=&x \cdot_A y + (s_A(x)b + \varepsilon(a,y)s_A(y)a)\\
&&+a \cdot_B b + (s_B(a)y + \varepsilon(x,b)s_B(b)x),
\end{array} \\
&&\begin{array}{lcl}
(x + a) \diamond(y + b)&=&
x \diamond_A y + (l_A(x)b + r_A(y)a)\\
&& +a \diamond_B b + (l_B(a)y + r_B(b)x).
\end{array}
\end{eqnarray}
\end{thm}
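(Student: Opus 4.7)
The plan is to leverage the two matched-pair results already established for the underlying structures and then reduce the Hom-Novikov-Poisson compatibility conditions to the extra list of identities in the hypothesis. Since the hypothesis already asserts that $A\bowtie^{s_A,\beta}_{s_B,\alpha}B$ is a matched pair of $\varepsilon$-commutative Hom-associative color Hom-algebras, Proposition \ref{prop:matched.pairs.comm.Hom-ass.coloralgeb} gives directly that $(A\oplus B,\cdot,\varepsilon,\alpha+\beta)$ is $\varepsilon$-commutative Hom-associative. Similarly, the matched-pair assumption $A\bowtie^{l_A,r_A,\beta}_{l_B,r_B,\alpha}B$ together with Proposition \ref{thm:matchedpairs} yields that $(A\oplus B,\diamond,\varepsilon,\alpha+\beta)$ is a Hom-Novikov color Hom-algebra. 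Therefore the only thing left to prove is the Hom-Novikov-Poisson compatibility: for $X=x+a,\ Y=y+b,\ Z=z+c$ homogeneous of degrees $\gamma_1,\gamma_2,\gamma_3$, the identities
\[
(X\cdot Y)\diamond(\alpha+\beta)(Z)=\varepsilon(Y,Z)(X\diamond Z)\cdot(\alpha+\beta)(Y)
\]
and
\[
(X\diamond Y)\cdot(\alpha+\beta)(Z)-(\alpha+\beta)(X)\diamond(Y\cdot Z)=\varepsilon(X,Y)\bigl((Y\diamond X)\cdot(\alpha+\beta)(Z)-(\alpha+\beta)(Y)\diamond(X\cdot Z)\bigr).
\]

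To handle each identity, I would expand both sides with the definitions of $\cdot$ and $\diamond$ on $A\oplus B$, collect all terms according to where each lives (pure $A$-part, pure $B$-part, and the mixed terms built from $s_A,l_A,r_A$ applied to elements of $B$, or from $s_B,l_B,r_B$ applied to elements of $A$), and bracket the $\varepsilon$-factors using Remark \ref{remarkepsilon} so that they become expressible in $X,Y,Z$-degrees. After this bookkeeping, the pure $A$-terms cancel because $(A,\cdot_A,\diamond_A,\varepsilon,\alpha)$ is itself a Hom-Novikov-Poisson color Hom-algebra (identities \eqref{HomNovPoisscoloralg1}--\eqref{HomNovPoisscoloralg2} in $A$), and symmetrically the pure $B$-terms cancel by the same two identities in $B$. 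The remaining mixed terms split into six natural packets, three in $A$ and three in $B$, each of which is exactly one of the twelve extra hypotheses of the theorem; applying the corresponding identity to each packet makes it vanish.

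A careful grouping is essential: for the first compatibility, the mixed terms in $B$ arrange themselves into blocks matching the first three listed extra identities, and the mixed terms in $A$ into blocks matching the three identities with $\alpha$ and $\beta$ swapped. For the second compatibility, the $\varepsilon(X,Y)$ factor in front of the second summand forces a clean pairing: after applying skew-commutativity \eqref{commutative:homcoloralgeb} and the bimodule axioms \eqref{bimod:comm.hom.ass.color.algeb.lara.silvest} and \eqref{Cond1}--\eqref{Cond6}, the mixed terms collapse into six further blocks, which exactly match the remaining six hypothesized identities.

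The main obstacle is organizational rather than conceptual: the bookkeeping of roughly thirty monomials per side for each of the two compatibilities, with carefully tracked bicharacter factors, is sizeable. Using Remark \ref{remarkepsilon} to identify $\varepsilon(v_i,x_j)$ with $\varepsilon(X_i,X_j)$ where possible, and grouping pure $A$-, pure $B$-, and mixed blocks before applying hypotheses, mirrors the proof structure already carried out in Proposition \ref{pro2:dirsumHomNovikovPoissoncolorHomalg} for the semi-direct product (which is the special case of this theorem in which one factor is trivial). This parallel suggests writing the proof as the natural generalization of that one, replacing actions of a single algebra on a bimodule with the symmetric pair of mutual actions encoded in $(s_A,l_A,r_A)$ and $(s_B,l_B,r_B)$, so that each identity of the hypothesis appears exactly once as the justification for the vanishing of a single block.
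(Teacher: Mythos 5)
Your proposal follows exactly the route the paper takes: invoke Proposition \ref{prop:matched.pairs.comm.Hom-ass.coloralgeb} and Proposition \ref{thm:matchedpairs} to obtain the $\varepsilon$-commutative Hom-associative and Hom-Novikov structures on $A\oplus B$, then verify the two Hom-Novikov-Poisson compatibility conditions by direct expansion, cancelling the pure $A$- and $B$-parts via the axioms in each factor and matching the mixed blocks to the twelve hypothesized identities. In fact your sketch is more explicit than the paper's own proof, which dismisses the compatibility check as "easy" by analogy with the earlier matched-pair computation.
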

\begin{proof}
By Proposition \ref{prop:matched.pairs.comm.Hom-ass.coloralgeb} and Proposition \ref{thm:matchedpairs}, we deduce that $(A\oplus B,\cdot,\varepsilon,\alpha+\beta)$
is a $\varepsilon$-commutative Hom-associative color Hom-algebra and $(A\oplus B,\diamond,\alpha+\beta)$ is a Hom-Novikov color Hom-algebra.
Now, the
rest, it is easy (in a similar way as for Proposition \ref{prop:matched.pairs.comm.Hom-ass.coloralgeb}) to verify the compatibility conditions are satisfied.
\end{proof}
\begin{defn}
A transposed Hom-Poisson color Hom-algebra is defined as a quintuple
$(A,\cdot,[\cdot,\cdot],\varepsilon,\alpha)$, where $(A,\cdot,\varepsilon,\alpha)$ is a
$\varepsilon$-commutative Hom-associative color Hom-algebra and
$(A,[\cdot,\cdot],\varepsilon,\alpha)$ is a Hom-Lie color Hom-algebra, satisfying the
transposed Hom-$\varepsilon$-Leibniz identity for $x,y,z\in \mathcal{H}(A)$,
\begin{equation}\label{leibniz}
2\alpha(z)\cdot[x,y]=[z\cdot x,\alpha(y)]+\varepsilon(z,x)[\alpha(x),z\cdot y].
\end{equation}
\end{defn}
\begin{prop}
Let $(A,\cdot, [\cdot,\cdot],\varepsilon,\alpha)$ be a multiplicative
transposed Hom-Posson color Hom-algebra. Then the following identities hold for all $h,x,y,z\in \mathcal{H}(A)$,
\begin{eqnarray}
&&\circlearrowleft_{x,y,z}\varepsilon(z,x)\alpha(x)\cdot[y,z]=0,\label{eq:gi1}\\
&&\circlearrowleft_{x,y,z}\varepsilon(z,x)[\alpha(h)\cdot[x,y],\alpha^2(z)]=0,\label{eq:gi2}\\
&&\circlearrowleft_{x,y,z}\varepsilon(z,x)[\alpha(h)\cdot \alpha(x),[\alpha(y),\alpha(z)]]=0,\label{eq:gi3}\\
&&\circlearrowleft_{x,y,z}\varepsilon(z,x)[\alpha(h),\alpha(x)]\cdot [\alpha(y),\alpha(z)]=0.\label{eq:gi4}
\end{eqnarray}
\end{prop}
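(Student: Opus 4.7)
The approach is to derive all four identities from the transposed Hom-$\varepsilon$-Leibniz identity \eqref{leibniz}, by combining it with the $\varepsilon$-commutativity of $\cdot$, the $\varepsilon$-skew-symmetry and the Hom-Jacobi identity of $[\cdot,\cdot]$, and the multiplicativity of $\alpha$ on both operations. The general pattern is: apply \eqref{leibniz} (once or twice) to rewrite the product-with-bracket factor appearing in the summand as a sum of (double) bracket expressions, and then exploit Hom-Jacobi and graded antisymmetry to exhibit pairwise cancellations after summing cyclically.

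For \eqref{eq:gi1}, I would apply \eqref{leibniz} with $(z',x',y')=(x,y,z)$ to obtain
\begin{equation*}
2\alpha(x)\cdot[y,z] \;=\; [x\cdot y,\alpha(z)]+\varepsilon(x,y)[\alpha(y),x\cdot z].
\end{equation*}
Multiplying by $\varepsilon(z,x)$ and summing over cyclic permutations of $(x,y,z)$ produces six bracket terms. Using the $\varepsilon$-skew-symmetry of $[\cdot,\cdot]$ together with the $\varepsilon$-commutativity of $\cdot$, each of these terms can be matched with exactly one other term of opposite sign, whence the full cyclic sum vanishes.

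For \eqref{eq:gi2}, I would expand $\alpha(h)\cdot[x,y]$ through \eqref{leibniz} with $(z',x',y')=(h,x,y)$, bracket the result with $\alpha^{2}(z)$, and then sum cyclically in $(x,y,z)$ with weight $\varepsilon(z,x)$. The resulting double-bracket expressions are simplified by applying the Hom-Jacobi identity, after first aligning the relevant $\alpha$-powers using the multiplicativity conditions $\alpha(u\cdot v)=\alpha(u)\cdot\alpha(v)$ and $\alpha([u,v])=[\alpha(u),\alpha(v)]$; the remaining cancellations reduce to an instance of \eqref{eq:gi1}. For \eqref{eq:gi4}, multiplicativity yields
\begin{equation*}
[\alpha(h),\alpha(x)]\cdot[\alpha(y),\alpha(z)] \;=\; \alpha\bigl([h,x]\cdot[y,z]\bigr),
\end{equation*}
so it suffices to establish $\circlearrowleft_{x,y,z}\varepsilon(z,x)[h,x]\cdot[y,z]=0$; this follows by applying \eqref{leibniz} with second argument $[h,x]$ (using multiplicativity to identify $\alpha([h,x])=[\alpha(h),\alpha(x)]$) and then invoking \eqref{eq:gi1}. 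Analogously, \eqref{eq:gi3} reduces via multiplicativity to a double-bracket identity that is obtained from Hom-Jacobi together with \eqref{eq:gi2}.

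The main obstacle is not conceptual but combinatorial: the many $\varepsilon$-factors generated by each swap of homogeneous elements, as well as the different powers of $\alpha$ that appear in the various Leibniz- and Jacobi-rewritings, must be tracked carefully and aligned (using multiplicativity) before the cancellations become transparent. Once this bookkeeping is carried out, the proofs are mechanical consequences of \eqref{leibniz}, $\varepsilon$-skew-symmetry, $\varepsilon$-commutativity, and the Hom-Jacobi identity.
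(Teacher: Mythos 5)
Your treatment of \eqref{eq:gi1} coincides with the paper's: apply \eqref{leibniz}, reweight by $\varepsilon(z,x)$, and let the cyclic sum telescope via $\varepsilon$-skew-symmetry and $\varepsilon$-commutativity. The overall strategy for the remaining identities (Leibniz plus Hom-Jacobi plus multiplicativity) is also the one the paper follows. However, two of your reduction claims do not go through as stated. For \eqref{eq:gi2}, expanding $\alpha(h)\cdot[x,y]$ by \eqref{leibniz} and applying Hom-Jacobi does \emph{not} leave you with an instance of \eqref{eq:gi1}: the surviving terms are of the form $[[\alpha(y),\alpha(z)],\alpha(h)\cdot\alpha(x)]$, i.e.\ the cyclic sum appearing in \eqref{eq:gi3}, not a sum of products $\alpha(x)\cdot[y,z]$. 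What you actually obtain is one linear relation between the \eqref{eq:gi2}-sum and the \eqref{eq:gi3}-sum, which by itself proves neither. The paper's proof supplies a \emph{second}, independent relation between these same two sums by applying \eqref{leibniz} with $\alpha(h)$ as multiplier and $[[x,y],\alpha(z)]$ as the bracket factor, and annihilating the left-hand side $2\alpha^2(h)\cdot\bigl(\circlearrowleft_{x,y,z}\varepsilon(z,x)[[x,y],\alpha(z)]\bigr)$ by the Hom-Jacobi identity; only by solving the resulting $2\times 2$ linear system do \eqref{eq:gi2} and \eqref{eq:gi3} both follow. This second relation is the missing idea in your sketch.

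For \eqref{eq:gi4} there are two further problems. First, stripping $\alpha$ via multiplicativity to reduce to $\circlearrowleft_{x,y,z}\varepsilon(z,x)[h,x]\cdot[y,z]=0$ is counterproductive: the transposed Leibniz identity \eqref{leibniz} can only be applied to a product whose first factor is of the form $\alpha(c)$, and the untwisted expression $[h,x]\cdot[y,z]$ offers no such factor, so the reduced statement is not accessible by the stated tools (and need not hold in general). One must instead work with $2\,\alpha([x,y])\cdot[\alpha(h),\alpha(z)]$ directly, taking $c=[x,y]$ in \eqref{leibniz}. Second, after this application of \eqref{leibniz} the two resulting families of terms are $[\alpha(h)\cdot[x,y],\alpha^2(z)]$ and $[\alpha^2(h),\alpha(z)\cdot[x,y]]$; the first cyclic sum vanishes by \eqref{eq:gi2} and only the second reduces to \eqref{eq:gi1}. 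Citing \eqref{eq:gi1} alone, as you do, leaves half of the terms unaccounted for, and your choice of $[h,x]$ (rather than $[x,y]$ or $[y,z]$) as the Leibniz multiplier traps $h$ inside the inner bracket, producing terms $[\alpha(y)\cdot[h,x],\alpha^2(z)]$ that match neither \eqref{eq:gi1} nor \eqref{eq:gi2}.
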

\begin{proof}
\underline{\bf Proof of \eqref{eq:gi1}:}
Let $x, y, z\in\mathcal{H}(A)$. By the
transposed Hom-$\varepsilon$-Leibniz identity,
\begin{align*}
&\circlearrowleft_{x,y,z}\varepsilon(z,x)\Big(2\alpha(x)\cdot[y,z]\Big)=\circlearrowleft_{x,y,z}\varepsilon(z,x)\Big([x\cdot y,\alpha(z)]+\varepsilon(x,y)[\alpha(y),x\cdot z]\Big)\\&\quad=\circlearrowleft_{x,y,z}\varepsilon(z,x)\Big([x\cdot y,\alpha(z)]+\varepsilon(x+y,z)[z\cdot x,\alpha(y)]\Big)\\
&\quad=\circlearrowleft_{x,y,z}\Big(\varepsilon(z,x)[x\cdot y,\alpha(z)]-\varepsilon(y,z)[z\cdot x,\alpha(y)]\Big)=0.
\end{align*}
Then we have \eqref{eq:gi1}.

\underline{\bf Proof of \eqref{eq:gi2}:}
Let $x, y, z, h\in\mathcal{H}(A)$.
 First, by \eqref{leibniz}, we have
\begin{align*}\circlearrowleft_{x,y,z}\varepsilon(z,x)\Big(2\alpha^2(h)[[x,y],\alpha(z)]\Big)&=\circlearrowleft_{x,y,z}\varepsilon(z,x)\Big([\alpha(h),[x,y],\alpha^2(z)]\\&+\varepsilon(h,x+y)[\alpha([x,y]),\alpha(h\cdot z)]\Big).\end{align*}
Applying the Hom-Jacobi identity of the above equality, we obtain
\begin{equation}\label{first equation1}
   \begin{array}{l}
   \circlearrowleft_{x,y,z}\varepsilon(z,x)\Big([\alpha(h),[x,y],\alpha^2(z)]\Big) \\
   \quad \quad +\circlearrowleft_{x,y,z}\varepsilon(z,x)\Big(\varepsilon(h,x+y)[\alpha([x,y]),\alpha(h\cdot z)]\Big)
   =0.
   \end{array}
\end{equation}
Next, by the Hom-Jacobi identity, we have
\begin{align*}
\varepsilon(h,x+y)[[\alpha(x),\alpha(y)],\alpha(h\cdot z)]&+\varepsilon(x+y,z)[[h\cdot z,\alpha(x)],\alpha^2(y)]\\&+\varepsilon(h,y)\varepsilon(x,y+z)[[\alpha(y),h\cdot z],\alpha^2(x)]=0,
\end{align*}
and by \eqref{leibniz} we have
\begin{align*}\varepsilon(h,y)\varepsilon(x,y+z)[[\alpha(y),h\cdot z],\alpha^2(x)]=&2\varepsilon(x,y+z)[\alpha(h)\cdot [y,z],\alpha^2(x)]\\&-\varepsilon(x,y+z)[[h\cdot y,\alpha(z)],\alpha^2(x)]=0.\end{align*}
Thus we obtain
\begin{align*}
&\varepsilon(h,x+y)[[\alpha(x),\alpha(y)],\alpha(h\cdot z)]+\varepsilon(x+y,z)[[h\cdot z,\alpha(x)],\alpha^2(y)]\\
&+\varepsilon(x,y+z)\big(2[\alpha(h)\cdot[y,z],\alpha^2(x)]-\varepsilon(x,y+z)[[h\cdot y,\alpha(z)],\alpha^2(x)]\big)=0.
\end{align*}
Similarly, we have
\begin{align*}
& \circlearrowleft_{x,y,z}\varepsilon(z,x)\Big(\varepsilon(h,x+y)[[\alpha(x),\alpha(y)],\alpha(h\cdot z)]+ \varepsilon(x+y,z)[[h\cdot z,\alpha(x)],\alpha^2(y)]\\
&+\varepsilon(x,y+z)\big(2[\alpha(h)\cdot[y,z],\alpha^2(x)]-\varepsilon(x,y+z)[[h\cdot y,\alpha(z)],\alpha^2(x)]\big)\Big)=0.
\end{align*}
Taking the above sum, we obtain
\begin{equation}\label{second equation2}
\begin{array}{l}
    \circlearrowleft_{x,y,z}\varepsilon(z,x)\Big(\varepsilon(h,x+y)[[\alpha(x),\alpha(y)],\alpha(h\cdot z)]\Big) \\
    \quad \quad +\circlearrowleft_{x,y,z}\varepsilon(z,x)\Big(2[\alpha(h)\cdot[x,y],\alpha^2(x)]\Big)=0.
\end{array}
\end{equation}
Finally, taking the difference between the two equations \eqref{first equation1} and \eqref{second equation2} we obtain \eqref{eq:gi2}.

\underline{\bf Proof of \eqref{eq:gi3}:}
Taking the difference between the two equations \eqref{eq:gi2} and \eqref{first equation1} we obtain \eqref{eq:gi3}

\underline{\bf Proof of \eqref{eq:gi4}:}
Let $x, y, z, h\in\mathcal{H}(A)$.
By \eqref{leibniz} we have
\begin{align*}&\circlearrowleft_{x,y,z}\varepsilon(z,x)\Big(2\varepsilon(h,x+y)[\alpha(x),\alpha(y)]\cdot[\alpha(h),\alpha(z)]\Big)\\
&\quad =\circlearrowleft_{x,y,z}\varepsilon(z,x)\Big([\alpha(h)\cdot[x,y],\alpha^2(z)]+\varepsilon(x+y,z)[\alpha^2(h),\alpha(z)\cdot[x,y].]\Big).\end{align*}
Applying equations \eqref{eq:gi1} and \eqref{eq:gi2} to above equality, we obtain \eqref{eq:gi4}.
\end{proof}
\begin{prop}\label{prop:homnovikovpoisson to transposedpoissonalgebs}
Let $(A,\cdot,\diamond,\varepsilon,\alpha)$ be a Hom-Novikov-Poisson color Hom-algebra. Define
\begin{equation}[x,y]=x\diamond y-\varepsilon(x,y)y\diamond x,\;\; \forall x,y\in \mathcal{H}(A).\label{eq:com}\end{equation}
Then
$(A,\cdot,[\cdot,\cdot],\varepsilon,\alpha)$ is a Hom-transposed-Poisson color Hom-algebra.
\end{prop}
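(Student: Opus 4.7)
The plan is to verify the three defining ingredients of a transposed Hom-Poisson color Hom-algebra: the $\varepsilon$-commutative Hom-associative structure of $(A,\cdot,\varepsilon,\alpha)$, the Hom-Lie color Hom-algebra structure of $(A,[\cdot,\cdot],\varepsilon,\alpha)$, and the transposed Hom-$\varepsilon$-Leibniz identity \eqref{leibniz}. The first is immediate from the hypothesis, since it is the $\cdot$-component of the Hom-Novikov-Poisson structure. The second follows directly from Proposition \ref{Hom-Novik to Hom-Lie} applied to the underlying Hom-Novikov structure $(A,\diamond,\varepsilon,\alpha)$: the commutator \eqref{eq:com} is exactly \eqref{eqNovtoLie}, hence the bracket is $\varepsilon$-skew symmetric and satisfies the $\varepsilon$-Hom-Jacobi identity.

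The main work is establishing \eqref{leibniz}. I would first expand the left-hand side $2\alpha(z)\cdot[x,y]$ using the definition of the bracket, rewrite each term $\alpha(z)\cdot(x\diamond y)$ and $\alpha(z)\cdot(y\diamond x)$ as $\varepsilon(z,x+y)(x\diamond y)\cdot\alpha(z)$ and $\varepsilon(z,x+y)(y\diamond x)\cdot\alpha(z)$ by \eqref{commutative:homcoloralgeb}, and then apply the compatibility \eqref{HomNovPoisscoloralg2} to convert the resulting antisymmetrized $\diamond$-$\cdot$ expression into $\alpha(x)\diamond(y\cdot z)-\varepsilon(x,y)\alpha(y)\diamond(x\cdot z)$. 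Another use of $\varepsilon$-commutativity of $\cdot$ brings the LHS into the canonical form $2\varepsilon(z,x)\,\alpha(x)\diamond(z\cdot y)-2\varepsilon(z+x,y)\,\alpha(y)\diamond(z\cdot x)$.

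Next I would expand the RHS $[z\cdot x,\alpha(y)]+\varepsilon(z,x)[\alpha(x),z\cdot y]$ and invoke the first compatibility \eqref{HomNovPoisscoloralg1} to rewrite $(z\cdot x)\diamond\alpha(y)$ as $\varepsilon(x,y)(z\diamond y)\cdot\alpha(x)$ and $(z\cdot y)\diamond\alpha(x)$ as $\varepsilon(y,x)(z\diamond x)\cdot\alpha(y)$; after collecting the resulting commutation factors (the product $\varepsilon(z,x)\varepsilon(x,z+y)\varepsilon(y,x)$ collapses to $1$), the RHS takes the form
\[
\varepsilon(x,y)(z\diamond y)\cdot\alpha(x)-(z\diamond x)\cdot\alpha(y)+\varepsilon(z,x)\,\alpha(x)\diamond(z\cdot y)-\varepsilon(z+x,y)\,\alpha(y)\diamond(z\cdot x).
\]
Subtracting RHS from LHS, the mixed $\diamond$-$\cdot$ terms $\alpha(x)\diamond(z\cdot y)$ and $\alpha(y)\diamond(z\cdot x)$ now appear with matching coefficients $\varepsilon(z,x)$ and $\varepsilon(z+x,y)$, so the problem reduces to the auxiliary identity
\[
\varepsilon(z,x)\,\alpha(x)\diamond(z\cdot y)-\varepsilon(z+x,y)\,\alpha(y)\diamond(z\cdot x)=\varepsilon(x,y)(z\diamond y)\cdot\alpha(x)-(z\diamond x)\cdot\alpha(y).
\]

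To close the proof I would verify this auxiliary identity by expanding the two $\alpha(\cdot)\diamond(z\cdot\cdot)$ terms again through \eqref{HomNovPoisscoloralg2}; the spurious $\alpha(z)\diamond(x\cdot y)$ and $\alpha(z)\diamond(y\cdot x)$ contributions cancel immediately by $\varepsilon$-commutativity of $\cdot$, and what remains is $\varepsilon(z,x)(x\diamond z)\cdot\alpha(y)-\varepsilon(z+x,y)(y\diamond z)\cdot\alpha(x)$, which vanishes after a final application of \eqref{HomNovPoisscoloralg1} to each summand and using $y\cdot x=\varepsilon(y,x)x\cdot y$ to identify the two terms. The only genuine obstacle is the careful bookkeeping of the commutation factors; every structural axiom of the Hom-Novikov-Poisson color Hom-algebra is invoked exactly once, and the cancellations are precisely what the compatibility conditions \eqref{HomNovPoisscoloralg1}-\eqref{HomNovPoisscoloralg2} are engineered to deliver.
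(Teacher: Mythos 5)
Your proposal is correct and follows essentially the same route as the paper's proof: both reduce the problem to the transposed Hom-$\varepsilon$-Leibniz identity \eqref{leibniz} (the other two ingredients being immediate from the hypothesis and Proposition \ref{Hom-Novik to Hom-Lie}), and both verify it by expanding the bracket and cancelling via \eqref{HomNovPoisscoloralg1}, \eqref{HomNovPoisscoloralg2} and the $\varepsilon$-commutativity \eqref{commutative:homcoloralgeb}. Your repackaging of the computation into an explicit auxiliary identity is only an organizational variant of the paper's single combined expansion, and your bookkeeping of the commutation factors checks out.
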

\begin{proof}
By definition, we have $(A,\cdot,\varepsilon,\alpha)$ is a $\varepsilon$-commutative Hom-associative color Hom-algebra and by Proposition \ref{Hom-Novik to Hom-Lie}, $(A,[\cdot,\cdot],\varepsilon,\alpha)$ is a Hom-Lie color Hom-algebra.
Now, we show that the $\varepsilon$-transposed Hom-Leibniz identity is satisfied. For any $x,y,z \in \mathcal{H}(A)$ we have
\begin{align*}
&\varepsilon(z,x)[x\cdot z,\alpha(y)]+\varepsilon(z,x+y)[\alpha(x),y\cdot z]-2\alpha(z)\cdot[x,y]\\
    &=\varepsilon(z,x)\Big((x\cdot z)\diamond\alpha(y)-\varepsilon(x+z,y)\alpha(y)\diamond(x\cdot z)\Big)\\
  & +\varepsilon(z,x+y)\Big(\alpha(x)\diamond(y\cdot z)-\varepsilon(x,y+z)(y\cdot z)\diamond\alpha(x)\Big)-2\alpha(z)\cdot\Big(x\cdot y-\varepsilon(x,y) y\cdot x\Big)\\
   & \text{\scriptsize (using Eqs. \eqref{HomNovPoisscoloralg1} and \eqref{HomNovPoisscoloralg2})}\\
   &=\varepsilon(z,x)(x\cdot z)\diamond\alpha(y)-\alpha(z)(x\diamond y)+\varepsilon(x,y)\alpha(z)\cdot (y\diamond x)\\
 &\quad-\varepsilon(x+z,y)(y\cdot z)\diamond\alpha(x)-\Big(\varepsilon(z,x+y) (x\diamond y)\cdot\alpha(z)\\
   &\quad-\varepsilon(x,y)(y\diamond x)\cdot\alpha(z)-\varepsilon(z,x+y)\alpha(x)\diamond(y\cdot z)+\varepsilon(x,y)\alpha(y)\diamond(x\cdot z)\Big)=0.
\end{align*}
Hence the conclusion holds.
\end{proof}
\begin{ex}
Let $A = A_{0} \oplus A_{1} =< e_1,e_2> \oplus <e_3,e_4>$
 be a $4$-dimensional superspace. The quintuple
$(A,\cdot,\diamond,\varepsilon,\alpha)$ is a Hom-Novikov-Poisson color Hom-algebra with
\begin{align*}
&\text{the bicharacter:} \quad \varepsilon(i,j)=(-1)^{ij},\\
&\text{the multiplications:} \quad
\begin{array}[t]{l}
e_2\cdot e_2=e_1, \quad e_2\cdot e_4=e_4\cdot e_2= e_3\\
e_2\diamond e_4=-e_3, \quad e_4\diamond e_2=e_3,\quad
e_4\diamond e_4=2e_1,
\end{array}
\\
&\text{the even linear map $\alpha: A \rightarrow A$ defined by}
\begin{array}[t]{ll}
\alpha(e_1)=2e_1, &\quad \alpha(e_2)=-e_2,\\
\alpha(e_3)=-e_4, &\quad \alpha(e_4)=e_3.
\end{array}
\end{align*}
Therefore, using the Proposition \ref{prop:homnovikovpoisson to transposedpoissonalgebs}, $(A,\cdot,[\cdot,\cdot],\varepsilon,\alpha)$ is a transposed Hom-Poisson color Hom-algebra with
$$[e_2,e_4]=[e_4,e_2]=-2e_3,\quad [e_4,e_4]=4e_1.$$
\end{ex}
\begin{defn}[\cite{bakayokoattan}]
A \emph{Hom-Poisson color Hom-algebra} is defined as a quintuple $(A,\cdot, [\cdot,\cdot],\varepsilon, \alpha)$ such that   $(A,\cdot, \varepsilon, \alpha)$ is a $\varepsilon$-commutative Hom-associative color Hom-algebra,
and $(A, [\cdot,\cdot], \varepsilon, \alpha)$ is a Hom-Lie color Hom-algebra, satisfying
for all $x, y, z$ in $\mathcal{H}(A)$, the Hom-$\varepsilon$-Leibniz identity,
\begin{equation}\label{CompatibiltyPoisson}
[\alpha (x) , y\cdot z]=\varepsilon(x,y)\alpha (y)\cdot
[x,z]+ \varepsilon(x+y,z)\alpha (z)\cdot [x,y].
\end{equation}
\end{defn}
Condition \eqref{CompatibiltyPoisson}, expressing the compatibility between the
multiplication and the Poisson bracket, can be reformulated equivalently as
\begin{equation}\label{CompatibiltyPoissonLeibform}
[x\cdot y,\alpha (z) ]=\varepsilon(y,z) [x,z]\cdot\alpha
(y)+\alpha (x)\cdot[y,z].
\end{equation}
\begin{defn}
Let $(A,\cdot,\diamond,\varepsilon,\alpha)$ be Hom-Novikov-Poisson color Hom-algebra. Then $A$ is called {\bf admissible} if $(A,\cdot,[\cdot,\cdot],\varepsilon,\alpha)$ is a Hom-Poisson color Hom-algebra, where
\begin{equation}
    [x,y]=x\diamond y-\varepsilon(x,y)y\diamond x.
\end{equation}
\end{defn}
\begin{lem}\label{lemmaHomNovikovalgeb}
Let $(A,\cdot,\diamond,\varepsilon,\alpha)$ be a Hom-Novikov-Poisson color Hom-algebra. Then for any $x,y,z\in\mathcal{H}(A)$,
\begin{equation}(x\cdot y)\diamond\alpha(z)=\alpha(x)\diamond(y\cdot z).\end{equation}
\end{lem}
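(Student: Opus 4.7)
Since \eqref{HomNovPoisscoloralg1} converts an outer $\diamond$-product into an outer $\cdot$-product, any proof built on \eqref{HomNovPoisscoloralg1} alone will land on the wrong shape for the target right-hand side $\alpha(x)\diamond(y\cdot z)$, which carries $\diamond$ on the outside. I therefore plan to drive the argument through the mixed Hom-Leibniz identity \eqref{HomNovPoisscoloralg2}, the unique axiom that produces the term $\alpha(x)\diamond(y\cdot z)$ directly.

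The opening move is \eqref{HomNovPoisscoloralg2} with the roles of $y$ and $z$ interchanged:
\[
(x\diamond z)\cdot\alpha(y)-\alpha(x)\diamond(z\cdot y)=\varepsilon(x,z)\bigl[(z\diamond x)\cdot\alpha(y)-\alpha(z)\diamond(x\cdot y)\bigr].
\]
Applying $\varepsilon$-commutativity of $\cdot$ to flip $z\cdot y=\varepsilon(z,y)\,y\cdot z$, multiplying the whole relation by $\varepsilon(y,z)$, and using \eqref{HomNovPoisscoloralg1} to collapse $\varepsilon(y,z)(x\diamond z)\cdot\alpha(y)$ to $(x\cdot y)\diamond\alpha(z)$ isolates the defect $Q(x,y,z):=(x\cdot y)\diamond\alpha(z)-\alpha(x)\diamond(y\cdot z)$ in the form
\[
Q(x,y,z)=\varepsilon(y,z)\varepsilon(x,z)\bigl[(z\diamond x)\cdot\alpha(y)-\alpha(z)\diamond(x\cdot y)\bigr].
\]
Replaying the same device on the bracket—using \eqref{HomNovPoisscoloralg1} inverted to rewrite $(z\diamond x)\cdot\alpha(y)=\varepsilon(x,y)\varepsilon(z,y)(y\cdot z)\diamond\alpha(x)$ and $\varepsilon$-commutativity to reorganise the coefficient, after which the bracket reads $\varepsilon(x,y)Q(z,y,x)$—delivers the twisted symmetry $Q(x,y,z)=\varepsilon(x,y)\varepsilon(x,z)\varepsilon(y,z)\,Q(z,y,x)$.

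To conclude, I would couple this symmetry with a second, independent relation on $Q$ drawn from the Hom-Novikov axioms \eqref{eqhomNovikov1}--\eqref{eqhomNovikov2} of $\diamond$, aiming for a linear system on $Q$ whose only solution is $Q\equiv 0$. Two natural routes come to mind: substitute $x\mapsto x\diamond w$ (or $z\mapsto z\diamond w$) in \eqref{HomNovPoisscoloralg2} and peel off the extra $\diamond$ through the Hom-Novikov identities, or apply $\alpha$ to both sides of the derived formula and use multiplicativity of $\alpha$ to transfer information between the $\cdot$-sector and the $\diamond$-sector.

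The main obstacle is precisely this second-constraint step. The symmetry $Q(x,y,z)=\varepsilon(x,y)\varepsilon(x,z)\varepsilon(y,z)Q(z,y,x)$ is self-consistent—iterating it once produces $Q(x,y,z)$ again with unit coefficient, since $[\varepsilon(x,y)\varepsilon(y,x)][\varepsilon(x,z)\varepsilon(z,x)][\varepsilon(y,z)\varepsilon(z,y)]=1$—so to force $Q\equiv 0$ the argument must import genuinely new information from the Hom-Novikov axioms of $\diamond$. Identifying the correct pairing of \eqref{HomNovPoisscoloralg2} with \eqref{eqhomNovikov1}--\eqref{eqhomNovikov2} that yields a relation on $Q$ linearly independent from the symmetry above is the step on which the whole proof hinges, and it is where I expect the real difficulty to lie; if no such pairing exists, the statement as worded will require an additional hypothesis—most plausibly one that trivialises the $\diamond$-commutator on products of the form $\alpha(x)\cdot(y\diamond z)$.
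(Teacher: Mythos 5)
Your proposal does not reach a proof, and you are right to be suspicious of the statement: as literally printed, Lemma \ref{lemmaHomNovikovalgeb} cannot be a consequence of the Hom-Novikov-Poisson axioms alone. The quantity $(x\cdot y)\diamond\alpha(z)-\alpha(x)\diamond(y\cdot z)$ is exactly the left associator $as^{l}_A(x,y,z)$ of \eqref{eqadmissibleleftassociator}, whose vanishing Theorem \ref{thmadmissibleleftassociator} proves to be \emph{equivalent to admissibility}; if the lemma held verbatim for every Hom-Novikov-Poisson color Hom-algebra, that theorem would be vacuous. The displayed equation contains a typo: the identity the paper actually proves (and the one used later) is $(x\cdot y)\diamond\alpha(z)=\alpha(x)\cdot(y\diamond z)$, with $\cdot$ on the outside and $\diamond$ on the inside of the right-hand side. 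For that corrected statement the paper's argument is a three-line chain through precisely the route you dismissed at the outset: by \eqref{commutative:homcoloralgeb}, $(x\cdot y)\diamond\alpha(z)=\varepsilon(x,y)(y\cdot x)\diamond\alpha(z)$; by \eqref{HomNovPoisscoloralg1} this equals $\varepsilon(x,y+z)(y\diamond z)\cdot\alpha(x)$; and by \eqref{commutative:homcoloralgeb} again this is $\alpha(x)\cdot(y\diamond z)$. So ``landing on an outer $\cdot$-product'' is not a defect of the method but the actual shape of the correct conclusion.

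Within your own development the computations are sound: the identity $Q(x,y,z)=\varepsilon(y,z)\varepsilon(x,z)\bigl[(z\diamond x)\cdot\alpha(y)-\alpha(z)\diamond(x\cdot y)\bigr]$ and the twisted symmetry $Q(x,y,z)=\varepsilon(x,y)\varepsilon(x,z)\varepsilon(y,z)\,Q(z,y,x)$ both check out, and your observation that the symmetry is an involution carrying no further information is correct. The concrete gap is the final step you flag yourself: no pairing with \eqref{eqhomNovikov1}--\eqref{eqhomNovikov2} will force $Q\equiv 0$, because $Q\equiv 0$ is strictly stronger than the axioms (it is the admissibility condition). Your fallback diagnosis --- that the statement as worded needs an extra hypothesis --- is therefore the right conclusion for the literal claim; the resolution intended by the paper is not an extra hypothesis but the corrected right-hand side $\alpha(x)\cdot(y\diamond z)$.
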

\begin{proof}
For all $x,y,z\in \mathcal{H}(A)$, we have
\begin{align*}
& \text{\scriptsize ($(A,\cdot,\varepsilon,\alpha)$ is $\varepsilon$-commutative)}\\
&(x\cdot y)\circ\alpha(z)=\varepsilon(x,y)(y\cdot x)\circ\alpha(z)\\
&\text{\scriptsize (by \eqref{HomNovPoisscoloralg1})}\\
&=\varepsilon(x,y+z)(y\circ z)\cdot \alpha(x)\\
&\text{\scriptsize ($(A,\cdot,\varepsilon,\alpha)$ is $\varepsilon$-commutative)}\\   &=\alpha(x)\cdot(y\circ z).
\qedhere \end{align*}
\end{proof}
The following result gives a necessary and sufficient condition under which a Hom-Novikov-Poisson color
algebra is admissible.
\begin{thm}\label{thmadmissibleleftassociator}
Let $(A,\cdot,\diamond,\varepsilon,\alpha)$ be Hom-Novikov-Poisson color Hom-algebra.
Then $A$ is an admissible if and only
if
\begin{equation}\label{eqadmissibleleftassociator}
    as^{l}_A(x,y,z)=(x\cdot y)\diamond\alpha(z)-\alpha(x)\diamond(y\cdot z)=0.
\end{equation}
\end{thm}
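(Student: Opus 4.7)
Since $(A,\cdot,\varepsilon,\alpha)$ is already $\varepsilon$-commutative Hom-associative by hypothesis and the bracket $[x,y]=x\diamond y-\varepsilon(x,y)y\diamond x$ makes $(A,[\cdot,\cdot],\varepsilon,\alpha)$ a Hom-Lie color Hom-algebra by Proposition~\ref{Hom-Novik to Hom-Lie}, admissibility of $A$ reduces to verifying the Hom-$\varepsilon$-Leibniz identity \eqref{CompatibiltyPoisson}, that is
\begin{equation*}
    [\alpha(x),\,y\cdot z] \;=\; \varepsilon(x,y)\alpha(y)\cdot[x,z] \;+\; \varepsilon(x+y,z)\alpha(z)\cdot[x,y]
\end{equation*}
for all homogeneous $x,y,z\in A$. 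The plan is to compute $\mathrm{LHS}-\mathrm{RHS}$ of this identity and exhibit it as a nonzero scalar multiple of $as^l_A(y,x,z)$; both directions of the equivalence will then follow simultaneously.

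To carry this out I would first rewrite $(y\cdot z)\diamond\alpha(x)$ on the left, and each of $\alpha(y)\cdot(x\diamond z)$, $\alpha(y)\cdot(z\diamond x)$, $\alpha(z)\cdot(x\diamond y)$, $\alpha(z)\cdot(y\diamond x)$ on the right, using $\varepsilon$-commutativity of $\cdot$ combined with \eqref{HomNovPoisscoloralg1} in the form $(u\cdot v)\diamond\alpha(w)=\varepsilon(v,w)(u\diamond w)\cdot\alpha(v)$, so that every term is brought to one of the four shapes $(x\cdot y)\diamond\alpha(z)$, $\alpha(x)\diamond(y\cdot z)$, $(x\diamond y)\cdot\alpha(z)$, $(y\diamond x)\cdot\alpha(z)$. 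Careful bookkeeping of the resulting bicharacter factors, using repeatedly $\varepsilon(a,b)\varepsilon(b,a)=1$ and $\varepsilon(a+b,c)=\varepsilon(a,c)\varepsilon(b,c)$, collapses the difference to
\begin{equation*}
    \mathrm{LHS}-\mathrm{RHS} \;=\; -as^l_A(x,y,z) \;-\; [x,y]\cdot\alpha(z).
\end{equation*}

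The decisive step is then to establish the auxiliary identity
\begin{equation*}
    [x,y]\cdot\alpha(z) \;=\; \alpha(x)\diamond(y\cdot z) \;-\; \varepsilon(x,y)\alpha(y)\diamond(x\cdot z),
\end{equation*}
which holds in every Hom-Novikov-Poisson color Hom-algebra: it is obtained by expanding $[x,y]$ and invoking \eqref{HomNovPoisscoloralg2} to see that the ``left-associator'' contributions on the two sides cancel. Substituting this into the previous expression and using $\varepsilon$-commutativity of $\cdot$ to write $(x\cdot y)\diamond\alpha(z)=\varepsilon(x,y)(y\cdot x)\diamond\alpha(z)$, the difference $\mathrm{LHS}-\mathrm{RHS}$ simplifies to $-\varepsilon(x,y)\,as^l_A(y,x,z)$. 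Since $\varepsilon(x,y)\in\mathbb{K^{\ast}}$, this quantity vanishes for all homogeneous $x,y,z$ if and only if $as^l_A\equiv 0$, which yields the theorem. The main obstacle is not any conceptual leap but the disciplined sign accounting: each rewriting introduces bicharacter factors that must be combined immediately, since a single misplaced $\varepsilon$ would break the delicate cancellations that are needed to reduce the twelve initial terms of $\mathrm{LHS}-\mathrm{RHS}$ to a single multiple of $as^l_A$.
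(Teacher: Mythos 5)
Your proposal is correct and follows essentially the same route as the paper: both reduce admissibility to the Hom-$\varepsilon$-Leibniz identity, use $\varepsilon$-commutativity together with \eqref{HomNovPoisscoloralg1} (the content of Lemma \ref{lemmaHomNovikovalgeb}) and the rearranged form of \eqref{HomNovPoisscoloralg2} to collapse the difference of the two sides, and conclude that it equals an invertible bicharacter multiple of $as^{l}_A(y,x,z)$. Your intermediate normal form $\mathrm{LHS}-\mathrm{RHS}=-as^{l}_A(x,y,z)-[x,y]\cdot\alpha(z)$ and the final coefficient $-\varepsilon(x,y)$ both check out.
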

\begin{proof}
By definition, $(A,\cdot,\varepsilon,\alpha)$ is a $\varepsilon$-commutative Hom-associative color Hom-algebra and by Proposition \ref{Hom-Novik to Hom-Lie},
$(A,[\cdot,\cdot],\varepsilon,\alpha)$ is a Hom-Lie color Hom-algebra. Therefore,
\begin{align*}
   &\text{\scriptsize (using \eqref{eqNovtoLie})}\\
    & [\alpha(x),y\cdot z]=\alpha(x)\diamond(y\cdot z)-\varepsilon(x+y,z)(y\cdot z)\diamond\alpha(x)\\
     &\text{\scriptsize (by \eqref{HomNovPoisscoloralg2} and Lemma \ref{lemmaHomNovikovalgeb})}\\
     &=(x\diamond y)\cdot\alpha(z)-\varepsilon(x,y)(y\diamond x)\cdot\alpha(z)+\varepsilon(x,y)\alpha(y)\diamond(x\cdot z) -\varepsilon(x,y+z)\alpha(y)\cdot(z\diamond x),\\
    & [x,y]\cdot\alpha(x)+\varepsilon(x,y)\alpha(y)[x,z]\\
    &\text{\scriptsize (using \eqref{eqNovtoLie})}\\
    &=(x\diamond y)\cdot\alpha(z)-\varepsilon(x,y)(y\diamond x)\cdot \alpha(z)+\varepsilon(x,y)\alpha(y)\cdot(x\diamond z) -\varepsilon(x,y+z)\alpha(y)\cdot(z\diamond x)\\
    &\text{\scriptsize (by Lemma \ref{lemmaHomNovikovalgeb})} \\
    &=(x\diamond y)\cdot\alpha(z)-\varepsilon(x,y)(y\diamond x)\cdot\alpha(z)+\varepsilon(x,y)(y\cdot x)\diamond\alpha(z)-\varepsilon(x,y+z)\alpha(y)\cdot(z\diamond x).
\end{align*}
Then $A$ satisfies the Hom-$\varepsilon$-Leibniz-identity if and only if $\alpha(y)\diamond(x\cdot z)=(y\cdot x)\diamond \alpha(z).$
\end{proof}
\begin{ex}
Let $A = A_{0} \oplus A_{1} =< e_1,e_2> \oplus <e_3,e_4>$
 be a $4$-dimensional super\-space. The quintuple
$(A,\cdot,\diamond,\varepsilon,\alpha)$ is a Hom-Novikov-Poisson color Hom-algebra with
\begin{align*}
&\text{the bicharacter:} \quad \varepsilon(i,j)=(-1)^{ij}, \\
&\text{the multiplications:} \quad
\begin{array}[t]{rclrcll}
e_2\cdot e_2&=&\lambda_1 e_1,& \quad e_2\cdot e_4&=&e_4\cdot e_2=\lambda_2 e_3, \quad\lambda_i\in\mathbb{K}\\
e_2\diamond e_2&=&\mu_1e_1,& \quad e_4\diamond e_2&=&\mu_2e_3,\\
\quad
e_4\diamond e_4&=&\mu_3 e_1,\quad &\mu_i\in\mathbb{K}\
\end{array} \\
&\text{the even linear map $\alpha: A \rightarrow A$ defined by}
\begin{array}[t]{l}
\alpha(e_1)=2e_1-e_2,\quad \alpha(e_2)=e_1,\\
\alpha(e_3)=-e_4,\quad \alpha(e_4)=e_3-e_4.
\end{array}
\end{align*}
Then by Theorem \ref{thmadmissibleleftassociator}, the Hom-Novikov-Poisson color Hom-algebra $(A,\cdot,\diamond,\varepsilon,\alpha)$ is admissible.
\end{ex}
\begin{thm}\label{twist-admissible Hom-Nov-Poiss}
Let $\mathcal{A}=(A, \cdot ,\diamond,\varepsilon)$ be an admissible Novikov-Poisson color Hom-algebra and
$\alpha :\mathcal{A}\rightarrow \mathcal{A}$ be an admissible Novikov-Poisson color Hom-algebras
morphism. Define $\cdot_{\alpha}, \ \diamond_\alpha:A \times A\rightarrow A$ for all $x, y\in \mathcal{H}(A)$, by
$x\cdot _{\alpha}y =\alpha (x\cdot y)$ and $x\diamond_{\alpha}y =\alpha(x\diamond y)$.
Then, $\mathcal{A}_\alpha=(A, \cdot _{\alpha},\diamond_\alpha,\varepsilon, \alpha)$ is an admissible Hom-Novikov-Poisson color Hom-algebra called the $\alpha$-twist or Yau twist of $(A, \cdot ,\diamond,\varepsilon)$.
\end{thm}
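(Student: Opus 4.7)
The plan is to reduce the assertion to two results already available in the excerpt: Theorem~\ref{twist-Hom-Nov-Poiss}, which produces the Hom-Novikov-Poisson color Hom-algebra structure on the Yau twist, and Theorem~\ref{thmadmissibleleftassociator}, which characterises admissibility by the single identity
$$as^{l}_A(x,y,z)=(x\cdot y)\diamond\alpha(z)-\alpha(x)\diamond(y\cdot z)=0.$$
Thus $\mathcal{A}_\alpha=(A,\cdot_\alpha,\diamond_\alpha,\varepsilon,\alpha)$ is already known to be a Hom-Novikov-Poisson color Hom-algebra, and the whole task is to check that the left-associator $as^{l}_{\mathcal{A}_\alpha}$ vanishes for the twisted operations.

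To perform the check, I would expand both terms using the definitions $x\cdot_\alpha y=\alpha(x\cdot y)$ and $x\diamond_\alpha y=\alpha(x\diamond y)$. The first term becomes
$$(x\cdot_\alpha y)\diamond_\alpha\alpha(z)=\alpha\bigl(\alpha(x\cdot y)\diamond\alpha(z)\bigr),$$
while the second reads
$$\alpha(x)\diamond_\alpha(y\cdot_\alpha z)=\alpha\bigl(\alpha(x)\diamond\alpha(y\cdot z)\bigr).$$
Since by hypothesis $\alpha$ is a morphism of the original structure, it is multiplicative with respect to both $\cdot$ and $\diamond$, so $\alpha(x\cdot y)=\alpha(x)\cdot\alpha(y)$ and $\alpha(y\cdot z)=\alpha(y)\cdot\alpha(z)$. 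Substituting, the outer $\alpha$ is applied on the left to $(\alpha(x)\cdot\alpha(y))\diamond\alpha(z)$ and on the right to $\alpha(x)\diamond(\alpha(y)\cdot\alpha(z))$.

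Equality then follows at once from the admissibility of $\mathcal{A}$: because $\mathcal{A}$ carries the trivial twisting map, Theorem~\ref{thmadmissibleleftassociator} collapses to $(u\cdot v)\diamond w=u\diamond(v\cdot w)$ for all $u,v,w\in\mathcal{H}(A)$, and instantiating $u=\alpha(x)$, $v=\alpha(y)$, $w=\alpha(z)$ produces the required identity; applying $\alpha$ to both sides closes the argument. A second invocation of Theorem~\ref{thmadmissibleleftassociator}, now in the twisted setting, promotes this to admissibility of $\mathcal{A}_\alpha$.

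I do not expect any genuine obstacle here: the argument is the standard Yau-twist manoeuvre and is purely bookkeeping about where $\alpha$ is applied. The only mild subtlety is that one needs \emph{both} pieces of the morphism hypothesis, namely multiplicativity of $\alpha$ with respect to each of the two products simultaneously; if $\alpha$ failed to preserve one of them, the two twisted expressions would not collapse to the same form and the reduction to the untwisted admissibility identity would break.
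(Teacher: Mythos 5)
Your proposal is correct and follows essentially the same route as the paper: invoke the Yau-twist theorem for the Hom-Novikov-Poisson structure, then reduce admissibility of $\mathcal{A}_\alpha$ to the vanishing of the left Hom-associator via Theorem~\ref{thmadmissibleleftassociator}, using multiplicativity of $\alpha$ to relate $as^{l}_{\mathcal{A}_\alpha}$ to $as^{l}_{\mathcal{A}}$ (the paper records this as $as^{l}_{\mathcal{A}_\alpha}(x,y,z)=\alpha^{2}\,as^{l}_{\mathcal{A}}(x,y,z)$). Your bookkeeping of where $\alpha$ lands matches the paper's argument in all essentials.
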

\begin{proof}
By Theorem \ref{morphHom-Nov-Poisscoloralgebra}, $\mathcal{A}_\alpha$ is a Hom-Novikov-Poisson color Hom-algebra. Moreover, the left Hom-associators in $\mathcal{A}$ and $\mathcal{A}_\alpha$ are related as
\begin{equation*}
as_{\mathcal{A}_\alpha}^{l}(x,y,z)=\alpha^2 as_\mathcal{A}^{l}(x,y,z)\quad for~ all~ x,y,z\in \mathcal{H}(A).\end{equation*}
Since $\mathcal{A}$ is left Hom-associative by Theorem \ref{thmadmissibleleftassociator}, it follows that so is $\mathcal{A}_\alpha$. Therefore, by Theorem \ref{thmadmissibleleftassociator}
again $\mathcal{A}_\alpha$ is admissible.
\end{proof}
\begin{cor}
If $\mathcal{A}=(A, \cdot,\diamond,\varepsilon,\alpha)$ is a multiplicative admissible Novikov-Poisson color
algebra, then for any $n\in\mathbb{N}^{\ast}$,
\begin{enumerate}
\item
The $n{\rm th}$ derived admissible Hom-Novikov-Poisson color Hom-algebra of type $1$ of $\mathcal{A}$ is
defined by
$$\mathcal{A}_{1}^{n}=(A,\cdot^{(n)}=\alpha^{n}\circ\cdot,\diamond^{(n)}=\alpha^{n}\circ\diamond,\varepsilon,\alpha^{n+1}).$$
\item
The $n{\rm th}$ derived admissible Hom-Novikov-Poisson color Hom-algebra of type $2$ of $A$ is
defined by
$$\mathcal{A}_{2}^{n}=(A,\cdot^{(2^n-1)}=\alpha^{2^n-1}\circ\cdot,\diamond^{(2^n-1)}=\alpha^{2^n-1}\circ\diamond,\varepsilon,\alpha^{2^n}).$$
\end{enumerate}
\end{cor}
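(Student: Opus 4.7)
The plan is to derive both parts by the same twisting mechanism that underlies Theorem \ref{twist-admissible Hom-Nov-Poiss}, now applied inside the already Hom-twisted category. The crucial ingredient is that multiplicativity of $\alpha$ forces multiplicativity of every power $\alpha^{k}$, so each $\alpha^{k}$ is a morphism $\mathcal{A} \to \mathcal{A}$ of multiplicative admissible Hom-Novikov-Poisson color Hom-algebras. This is exactly what allows us to iterate the construction.

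For Part (i), I would verify directly that $\mathcal{A}_1^{n} = (A, \cdot^{(n)}, \diamond^{(n)}, \varepsilon, \alpha^{n+1})$ satisfies each of the seven defining axioms: $\varepsilon$-commutativity \eqref{commutative:homcoloralgeb} and Hom-associativity \eqref{Homass:homassociator} of $\cdot^{(n)}$, the two Hom-Novikov axioms \eqref{eqhomNovikov1}--\eqref{eqhomNovikov2} for $\diamond^{(n)}$, the two Hom-Novikov-Poisson compatibility conditions \eqref{HomNovPoisscoloralg1}--\eqref{HomNovPoisscoloralg2}, and the admissibility condition \eqref{eqadmissibleleftassociator}. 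The uniform template is: in each operation $\ast^{(n)} = \alpha^{n} \circ \ast$ (for $\ast \in \{\cdot, \diamond\}$) the outer $\alpha^{n}$ is absorbed once per binary operation, and multiplicativity of $\alpha^{n}$ then pushes the remaining $\alpha^{n}$-factors inside, producing a common overall factor $\alpha^{2n}$ in front of the corresponding identity in $\mathcal{A}$. For example, Hom-associativity becomes
\[
(x \cdot^{(n)} y) \cdot^{(n)} \alpha^{n+1}(z) = \alpha^{2n}\bigl((x \cdot y) \cdot \alpha(z)\bigr) = \alpha^{2n}\bigl(\alpha(x) \cdot (y \cdot z)\bigr) = \alpha^{n+1}(x) \cdot^{(n)} (y \cdot^{(n)} z),
\]
and the remaining axioms follow by entirely analogous manipulations.

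For Part (ii), I would proceed by induction on $n$. The base case $n = 1$ gives $\mathcal{A}_2^{1} = (A, \alpha \circ \cdot, \alpha \circ \diamond, \varepsilon, \alpha^{2})$, which coincides with $\mathcal{A}_1^{1}$ and is therefore admissible by Part (i). For the inductive step, the key observation is that $\mathcal{A}_2^{n}$ arises from $\mathcal{A}_2^{n-1}$ by applying the type-1 derivation with parameter $2^{n-1}$ relative to the twisting map $\alpha^{2^{n-1}}$ of $\mathcal{A}_2^{n-1}$: the new product is $\alpha^{2^{n-1}} \circ (\alpha^{2^{n-1}-1} \circ \cdot) = \alpha^{2^{n}-1} \circ \cdot$ and the new twisting map is $(\alpha^{2^{n-1}})^{2} = \alpha^{2^{n}}$, which matches $\mathcal{A}_2^{n}$ on the nose. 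Since multiplicativity and admissibility are inherited at each step (multiplicativity because powers of $\alpha$ are multiplicative, admissibility by the inductive hypothesis), Part (i) applied to $\mathcal{A}_2^{n-1}$ completes the induction.

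The principal obstacle is not conceptual but mechanical: the bookkeeping in Part (i) requires unpacking each of the seven identities carefully so that the outer $\alpha^{n}$'s are absorbed in exactly the right places and the correct factor of $\alpha^{2n}$ is extracted uniformly. Each identity must be handled explicitly, but the template is identical across all of them, so the real work is ensuring consistency. If one prefers to bypass the induction in Part (ii), the same direct-verification template can be applied verbatim with $\alpha^{2^{n}-1}$ in place of $\alpha^{n}$ and $\alpha^{2^{n}}$ in place of $\alpha^{n+1}$, producing the claim without recourse to an inductive argument.
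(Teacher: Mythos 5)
Your argument is correct, and it is worth noting that the paper offers no proof of this corollary at all; the evidently intended argument (by parallel with the corresponding corollary for Hom-Gelfand-Dorfman color Hom-algebras, whose proof reads ``Apply Theorem \ref{twist-Gelfand-Dorfman} with $\alpha'=\alpha^{n}$ and $\alpha'=\alpha^{2^n-1}$'') is a one-line appeal to Theorem \ref{twist-admissible Hom-Nov-Poiss} with the morphism taken to be $\alpha^{n}$, respectively $\alpha^{2^n-1}$. Your route is genuinely different and, in fact, more careful: Theorem \ref{twist-admissible Hom-Nov-Poiss} as stated starts from an \emph{untwisted} admissible Novikov-Poisson color algebra and outputs the twisting map $\alpha^{n}$ rather than the $\alpha^{n+1}$ claimed in the corollary, so the literal one-line appeal does not quite cover a nontrivially twisted $\mathcal{A}$; your direct verification, which extracts the uniform factor $\alpha^{2n}$ from each of the seven axioms (including admissibility via Theorem \ref{thmadmissibleleftassociator} and \eqref{eqadmissibleleftassociator}) using only multiplicativity and evenness of $\alpha$, closes that gap. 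Two small remarks: your closing observation that part (ii) is the instance $m=2^{n}-1$ of the part-(i) template is the cleanest route, since $\mathcal{A}_{2}^{n}=\mathcal{A}_{1}^{2^{n}-1}$ on the nose, making the induction unnecessary; and in the inductive step the passage from $\mathcal{A}_{2}^{n-1}$ to $\mathcal{A}_{2}^{n}$ is the type-1 construction with parameter $1$ relative to the twisting map $\alpha^{2^{n-1}}$ (your displayed formulas are exactly this, so the phrase ``with parameter $2^{n-1}$'' is only a labelling slip, not a gap).
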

\begin{ex} Let $A = A_{0} \oplus A_{1} =< e_1,e_2> \oplus <e_3,e_4>$
 be a $4$-dimensional superspace. There is a multiplicative admissible Hom-Novikov-Poisson color Hom-algebra $(A, \cdot,\diamond,\varepsilon,\alpha)$
with the bicharacter, $\varepsilon(i,j)=(-1)^{ij},$ and the multiplications tables for a basis $\{e_1, e_2, e_3,e_4\}$:
\begin{center}
\begin{tabular}{c|cccc}
$\cdot$ & $e_1$ & $e_2$ & $e_3$ & $e_4$ \\ \hline
$e_1$ & $0$ & $0$ & $0$ & $0$ \\
$e_2$ & $0$ & $e_1$ & $0$ & $4e_3$ \\
$e_3$ & $0$ & $0$ & $0$& $0$ \\
$e_4$ & $0$ & $4e_3$ & $0$& $0$ \\
\end{tabular}
\hspace{1 cm}
\begin{tabular}{c|cccc}
$\diamond$ & $e_1$ & $e_2$ & $e_3$ & $e_4$ \\ \hline
$e_1$ & $0$ & $0$ & $0$ & $0$ \\
$e_2$ & $0$ & $4e_3$ & $0$ & $4e_3$ \\
$e_3$ & $0$ & $0$ & $0$& $0$ \\
$e_4$ & $0$ & $0$ & $0$& $e_1$ \\
\end{tabular}
\end{center}
$$\begin{array}{ll}
\alpha(e_1)=4e_1, &\alpha(e_2)=-2e_2,\\
\alpha(e_3)=e_3,&\alpha(e_4)=-2e_4.
\end{array}$$
Then there are admissible Hom-Novikov-Poisson color Hom-algebras $\mathcal{A}_{1}^{n}$ and $\mathcal{A}_{2}^{n}$ with multiplications
tables respectively:
\begin{center}
\begin{tabular}{c|cccc}
$\cdot^{(n)}$ & $e_1$ & $e_2$ & $e_3$ & $e_4$ \\ \hline
$e_1$ & $0$ & $0$ & $0$ & $0$ \\
$e_2$ & $0$ & $2^{2n}e_1$ & $0$ & $4e_3$ \\
$e_3$ & $0$ & $0$ & $0$& $0$ \\
$e_4$ & $0$ & $4e_3$ & $0$& $0$ \\
\end{tabular}
\hspace{1 cm}
\begin{tabular}{c|cccc}
$\diamond^{(n)}$ & $e_1$ & $e_2$ & $e_3$ & $e_4$ \\ \hline
$e_1$ & $0$ & $0$ & $0$ & $0$ \\
$e_2$ & $0$ & $4e_3$ & $0$ & $4e_3$ \\
$e_3$ & $0$ & $0$ & $0$& $0$ \\
$e_4$ & $0$ & $0$ & $0$& $2^{2n}e_1$ \\
\end{tabular}
\end{center}
$$\begin{array}{ll}
    \alpha^{n+1}(e_1)=4^{n+1}e_1, &\alpha^{n+1}(e_2)=(-2)^{n+1}e_2,\\
    \alpha^{n+1}(e_3)=e_3,&\alpha^{n+1}(e_4)=(-2)^{n+1}e_4,
\end{array}$$
\vspace{0,1 cm}
\begin{center}
\begin{tabular}{c|cccc}
$\cdot^{(2^n-1)}$ & $e_1$ & $e_2$ & $e_3$ & $e_4$ \\ \hline
$e_1$ & $0$ & $0$ & $0$ & $0$ \\
$e_2$ & $0$ & $2^{2(2^n-1)}e_1$ & $0$ & $4e_3$ \\
$e_3$ & $0$ & $0$ & $0$& $0$ \\
$e_4$ & $0$ & $4e_3$ & $0$& $0$ \\
\end{tabular}
\hspace{1 cm}
\begin{tabular}{c|cccc}
$\diamond^{(2^n-1)}$ & $e_1$ & $e_2$ & $e_3$ & $e_4$ \\ \hline
$e_1$ & $0$ & $0$ & $0$ & $0$ \\
$e_2$ & $0$ & $4e_3$ & $0$ & $4e_3$ \\
$e_3$ & $0$ & $0$ & $0$& $0$ \\
$e_4$ & $0$ & $0$ & $0$& $2^{2(2^n-1)}e_1$ \\
\end{tabular}
\end{center}
$$\begin{array}{ll}
    \alpha^{2^n}(e_1)=4^{2^n}e_1, &\alpha^{2^n}(e_2)=2^{2^{n}}e_2,\\
    \alpha^{2^n}(e_3)=e_3,&\alpha^{2^n}(e_4)=2^{2^{n}}e_4.
\end{array}$$
\end{ex}
\subsection{Tensor products of admissible Hom-Novikov-Poisson color Hom-algebras}
Now, we show that the much larger class of admissible Hom-Novikov-Poisson color Hom-algebras is also closed under tensor
products.
\begin{thm}\label{tensor product}
Let $(A_1, \cdot_1 ,\diamond_1,\varepsilon,\alpha_1)$ and $(A_2, \cdot_2 ,\diamond_2,\varepsilon,\alpha_2)$ be admissible Hom-Novikov-Poisson color Hom-algebras and let $A = A_1\otimes A_2$.
Define the operations $\alpha: A \rightarrow A$ and $\cdot,\diamond: A\otimes A\rightarrow A$ by the following formulae for
all $x_i, y_i \in \mathcal{H}(A_i), i\in\{1;2\}$,
\begin{eqnarray*}
    &\alpha=\alpha_1\otimes\alpha_2,\\
   &(x_1\otimes x_2)\cdot(y_1\otimes y_2)=\varepsilon(x_2,y_1)(x_1\cdot_1 y_1)\otimes(x_2\cdot_2 y_2),\\
   &  (x_1\otimes x_2)\diamond(y_1\otimes y_2)=\varepsilon(x_2,y_1)\big((x_1\diamond_1 y_1)\otimes(x_2\cdot_2 y_2)+(x_1\cdot_1 y_1)\otimes(x_2\diamond_2 y_2)\big),
\end{eqnarray*}
Then $(A,\cdot,\diamond, \varepsilon,\alpha)$ is an admissible Hom-Novikov-Poisson color Hom-algebra.
\end{thm}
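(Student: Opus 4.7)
The plan is to proceed in two stages: first verify that $(A,\cdot,\diamond,\varepsilon,\alpha)$ is a Hom-Novikov-Poisson color Hom-algebra in the sense of Definition \ref{def Hom-Nov Poss}, and then upgrade this to admissibility via the criterion of Theorem \ref{thmadmissibleleftassociator}, which only asks for the single left-Hom-associative identity \eqref{eqadmissibleleftassociator}. By bilinearity of $\cdot$ and $\diamond$, every axiom needs only be checked on simple tensors $X_i = a_i\otimes b_i$ of homogeneous elements $a_i\in\mathcal{H}(A_1)$, $b_i\in\mathcal{H}(A_2)$.

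For the first stage, I would split the work into three checks. The $\varepsilon$-commutativity and Hom-associativity of $\cdot$ reduce directly to the corresponding statements for $\cdot_1$ and $\cdot_2$; the braiding factor $\varepsilon(b_1,a_2)$ reshuffles by the bilinearity $\varepsilon(a+b,c)=\varepsilon(a,c)\varepsilon(b,c)$ together with $\varepsilon(a,b)\varepsilon(b,a)=1$. The Hom-Novikov identities \eqref{eqhomNovikov1}--\eqref{eqhomNovikov2} for $\diamond$ and the mixed Poisson identities \eqref{HomNovPoisscoloralg1}--\eqref{HomNovPoisscoloralg2} produce, after expansion, linear combinations of tensor products whose factors have the form $(u\star v)\bullet\alpha_i(w)$ or $\alpha_i(u)\bullet(v\star w)$ with $\star,\bullet\in\{\cdot_i,\diamond_i\}$. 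I would group these summands by their tensor signature $(\star_1\otimes\bullet_2)$ and handle each block independently, reducing it either to a Hom-Novikov axiom, to Hom-associativity, or to one of the compatibilities \eqref{HomNovPoisscoloralg1}--\eqref{HomNovPoisscoloralg2} inside a single factor; the $\varepsilon$-prefactors in matching blocks coincide by the bicharacter laws.

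For the second stage, Theorem \ref{thmadmissibleleftassociator} reduces admissibility of $A$ to the single identity
\begin{equation*}
(X_1\cdot X_2)\diamond\alpha(X_3)=\alpha(X_1)\diamond(X_2\cdot X_3).
\end{equation*}
Expanding the left-hand side yields
\begin{align*}
&\varepsilon(b_1,a_2)\,\varepsilon(b_1+b_2,a_3)\Bigl[((a_1\cdot_1 a_2)\diamond_1\alpha_1(a_3))\otimes((b_1\cdot_2 b_2)\cdot_2\alpha_2(b_3))\\
&\qquad\qquad+((a_1\cdot_1 a_2)\cdot_1\alpha_1(a_3))\otimes((b_1\cdot_2 b_2)\diamond_2\alpha_2(b_3))\Bigr],
\end{align*}
and the right-hand side produces the analogous expression with scalar prefactor $\varepsilon(b_2,a_3)\,\varepsilon(b_1,a_2+a_3)$. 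The two tensor blocks match term-by-term: the $(\diamond_1\otimes\cdot_2)$-summand agrees by admissibility of $A_1$ via \eqref{eqadmissibleleftassociator} together with Hom-associativity of $\cdot_2$, and the $(\cdot_1\otimes\diamond_2)$-summand agrees symmetrically. The scalar prefactors coincide since $\varepsilon(b_1,a_2)\,\varepsilon(b_1+b_2,a_3)=\varepsilon(b_1,a_2)\,\varepsilon(b_1,a_3)\,\varepsilon(b_2,a_3)=\varepsilon(b_2,a_3)\,\varepsilon(b_1,a_2+a_3)$.

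The main obstacle is purely combinatorial and concentrated in the first stage: each defining identity for $\diamond$ generates up to sixteen tensor summands (four from each appearance of $\diamond$ on either side), and each must be tracked with its bicharacter coefficient. The tensor-signature grouping described above is the organizing principle that collapses every block to a known identity in a single factor; once this bookkeeping is in place, the admissibility step of the second stage is essentially automatic thanks to Theorem \ref{thmadmissibleleftassociator}.
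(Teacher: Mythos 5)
Your proposal is correct and follows essentially the same route as the paper: verify the Hom-Novikov-Poisson axioms on simple tensors by grouping the expanded summands according to their tensor signature and reducing each block to an identity in a single factor, then settle admissibility via the left-associator criterion of Theorem \ref{thmadmissibleleftassociator}. The only (harmless) deviation is in the last step, where you check $(X_1\cdot X_2)\diamond\alpha(X_3)=\alpha(X_1)\diamond(X_2\cdot X_3)$ directly, whereas the paper verifies $\alpha(X_1)\cdot(X_2\diamond X_3)=\alpha(X_1)\diamond(X_2\cdot X_3)$ and then invokes Lemma \ref{lemmaHomNovikovalgeb}.
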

\begin{proof}
 Pick $x = x_1\otimes x_2, y = y_1\otimes y_2$
and $z = z_1 \otimes z_2$ homogeneous elements in $A$.

\underline{\bf Step 1:} We show that $(A,\cdot,\varepsilon,\alpha)$ is $\varepsilon$-commutative Hom-associative-color Hom-algebra:
\begin{align*}
x \cdot y &= (x_1\otimes x_2)\cdot(y_1\otimes y_2)\\
    &=\varepsilon(x_2,y_1)(x_1\cdot_1 y_1)\otimes(x_2\cdot_2 y_2)\\
     &=\varepsilon(x_2,y_1) \varepsilon(x_1,y_1)\varepsilon(x_2,y_2)(y_1\cdot_1 x_1)\otimes(y_2\cdot_2 x_2)\\
    &=  \varepsilon(x_1+x_2,y_1+y_2)\big(\varepsilon(y_2,x_1)(y_1\cdot_1 x_1)\otimes(y_2\cdot_2 x_2)\big)\\
     &=  \varepsilon(x_1+x_2,y_1+y_2)(y_1\otimes y_2)\cdot(x_1\otimes x_2)\\
      &=  \varepsilon(x,y)y\cdot x,\\
(x \cdot y)\cdot\alpha(z)&= ((x_1\otimes x_2)\cdot(y_1\otimes y_2))\cdot(\alpha_1\otimes \alpha_2)(z_1\otimes z_2)\\
    &=\Big(\varepsilon(x_2,y_1)\cdot(x_1\cdot_1 y_1)\otimes(x_2\otimes_2 y_2)\Big)\cdot(\alpha_1\otimes \alpha_2)(z_1\otimes z_2)\\
    &=\varepsilon(x_2+y_2,z_1)\varepsilon(x_2,y_1)\Big((x_1\cdot_1 y_1)\cdot_1\alpha_1(z_1)\otimes(x_2\cdot_2 y_2)\cdot_2\alpha_2(z_2)\Big)\\
    &=\varepsilon(x_2,y_1+z_1)\varepsilon(y_2,z_1)\Big(\alpha_1(x_1)\cdot_1(y_1\cdot_1 z_1)\otimes\alpha_2(x_2)\cdot_2( y_2\cdot_2 z_2)\Big)\\
    &=(\alpha_1\otimes\alpha_2)(x_1\otimes x_2)\cdot ((y_1\otimes y_2)\cdot(z_1\otimes z_2))\\
      &=\alpha(x)\cdot(y\cdot z).
\end{align*}
Hence, $(A_1\otimes A_2,\cdot,\varepsilon,\alpha)$ is a $\varepsilon$-commutative Hom-associative color Hom-algebra.

\underline{\bf Step 2:} We show that $(A,\diamond,\varepsilon,\alpha)$ is Hom-Novikov-color Hom-algebra.
\begin{align*}
    &(x\diamond y)\diamond\alpha(z)-\alpha(x)\diamond(y\diamond z)-\varepsilon(x,y)\big((y\diamond x)\diamond\alpha(z)-\alpha(y)\diamond(x\diamond z))\\
    &=\big((x_1\otimes x_2)\diamond(y_1\otimes y_2)\big)\diamond\alpha(z_1\otimes z_2)-(\alpha_1\otimes \alpha_2)(x_1\otimes x_2)\diamond\big((y_1\otimes y_2)\diamond(z_1\otimes z_2)\big)\\
   &\quad-\varepsilon(x_1+x_2,y_1+y_2)\Big(\big((y_1\otimes y_2)\diamond(x_1\otimes x_2)\big)\diamond(\alpha_1\otimes \alpha_2)(z_1\otimes z_2)\\
   &\quad-(\alpha_1\otimes \alpha_2)(y_1\otimes y_2)\diamond\big((x_1\otimes x_2)\diamond(z_1\otimes z_2)\big)\Big)\\
    &=\varepsilon(x_2,y_1)\big((x_1\diamond_1 y_1)\otimes(x_2\cdot_2 y_2)+(x_1\cdot_1 y_1)\otimes(x_2\diamond_2 y_2)\big)\diamond(\alpha_1\otimes\alpha_2)(z_1\otimes z_2)\\
    &\quad-\varepsilon(y_2,z_1)(\alpha_1\otimes \alpha_2)(x_1\otimes x_2)\diamond\big((y_1\diamond_1 z_1)\otimes(y_2\cdot_2 z_2)+(y_1\cdot_1 z_1)\otimes(y_2\diamond_2 z_2)\big)\\
   &\quad-\varepsilon(x_1+x_2,y_1+y_2)\Big(\varepsilon(y_2,x_1)\big((y_1\diamond_1 x_1)\otimes(y_2\cdot_2 x_2)+(y_1\cdot_1 x_1)\otimes(y_2\diamond_2 x_2)\big)\diamond\\&\quad\quad\quad(\alpha_1\otimes\alpha_2)(z_1\otimes z_2)-\varepsilon(x_2,z_1)(\alpha_1\otimes \alpha_2)(y_1\otimes y_2)\diamond\big((x_1\diamond_1 z_1)\otimes(x_2\cdot_2 z_2)\\
   &\quad +(x_1\cdot_1 z_1)\otimes(x_2\diamond_2 z_2)\big)\Big)\\
    &=\varepsilon(x_1+x_2,y_1+y_2)\times\\
    &\Big[\underbrace{(x_1\diamond_1 y_1)\diamond_1\alpha_1(z_1)\otimes(x_2\cdot_2 y_2)\cdot_2\alpha_2(z_2)}_{A_1}\underbrace{ +
   (x_1\diamond_1 y_1)\cdot_1\alpha_1(z_1)\otimes(x_2\cdot_2 y_2)\diamond_2\alpha_2(z_2)}_{A_2} \Big.\\
   &\underbrace{ +
   (x_1\cdot_1 y_1)\diamond_1\alpha_1(z_1)\otimes(x_2\diamond_2 y_2)\cdot_2\alpha_2(z_2)}_{A_3}\underbrace{ +
   (x_1\cdot_1 y_1)\cdot_1\alpha_1(z_1)\otimes(x_2\diamond_2 y_2)\diamond_2\alpha_2(z_2)}_{A_4}\\
   &\underbrace{ -\alpha_1(x_1)\diamond_1(y_1\diamond_1 z_1)\otimes\alpha_2(x_2)\cdot_2(y_2\cdot_2 z_2)}_{A_5}
   \underbrace{- \alpha_1(x_1)\cdot_1(y_1\diamond_1 z_1)\otimes\alpha_2(x_2)\diamond_2(y_2\cdot_2 z_2)}_{A_6}\\
   &
  \underbrace{  -\alpha_1(x_1)\diamond_1(y_1\cdot_1 z_1)\otimes\alpha_2(x_2)\cdot_2(y_2\diamond_2 z_2)}_{A_7}
   \underbrace{- \alpha_1(x_1)\cdot_1(y_1\cdot_1 z_1)\otimes\alpha_2(x_2)\diamond_2(y_2\diamond_2 z_2)}_{A_8}\\
    &\quad -\varepsilon(x_1,y_1)\varepsilon(x_2,y_2)\times\\
    &\Big(
   \underbrace{  (y_1\diamond_1 x_1)\diamond_1\alpha_1(z_1)\otimes(x_2\cdot_2 x_2)\cdot_2\alpha_2(z_2)}_{B_1}\underbrace{+
   (x_1\diamond_1 x_1)\cdot_1\alpha_1(z_1)\otimes(x_2\cdot_2 x_2)\diamond_2\alpha_2(z_2)}_{B_2}\\
   &\underbrace{ +
   (x_1\cdot_1 x_1)\diamond_1\alpha_1(z_1)\otimes(x_2\diamond_2 x_2)\cdot_2\alpha_2(z_2)}_{B_3}\underbrace{+
   (x_1\cdot_1 x_1)\cdot_1\alpha_1(z_1)\otimes(x_2\diamond_2 x_2)\diamond_2\alpha_2(z_2)}_{B_4}\\
   &\underbrace{ -\alpha_1(x_1)\diamond_1(x_1\diamond_1 z_1)\otimes\alpha_2(x_2)\cdot_2(x_2\cdot_2 z_2)}_{B_5}
   \underbrace{- \alpha_1(x_1)\cdot_1(x_1\diamond_1 z_1)\otimes\alpha_2(x_2)\diamond_2(x_2\cdot_2 z_2)}_{B_6}\\
   &
   \underbrace{ -\alpha_1(x_1)\diamond_1(x_1\cdot_1 z_1)\otimes\alpha_2(x_2)\cdot_2(x_2\diamond_2 z_2)}_{B_7} \\
   &\hspace{4cm}
  \underbrace{ \Big.\Big.-\alpha_1(x_1)\cdot_1(x_1\cdot_1 z_1)\otimes\beta(x_2)\diamond_2(x_2\diamond_2 z_2)\Big)\Big]}_{B_8}.
\end{align*}
Furthermore, we have
\begin{align*}
&(A_1+A_5)-\varepsilon(x_1,y_1)\varepsilon(x_2,y_2)(B_1+B_5)\\
 & \text{\scriptsize (by \eqref{Homass:homassociator}  and \eqref{commutative:homcoloralgeb})}\\
 & \quad=\Big[\big((x_1\diamond_1 y_1)\diamond_1\alpha_1(z_1)-\alpha_1(x_1)\diamond_1(y_1\diamond_1 z_1)\big)\\
 & \quad\quad-\varepsilon(x_1,y_1)\big((y_1\diamond_1 x_1)\diamond_1\alpha_1(z_1)-\alpha_1(y_1)\diamond_1(x_1\diamond_1 z_1)\big)\Big]\otimes (x_2\cdot_2 y_2)\cdot_2\alpha_2(z_2)\\
&   \text{\scriptsize (by \eqref{eqhomNovikov1})} \quad=0,\\
&(A_4+A_8)-\varepsilon(x_1,y_1)\varepsilon(x_2,y_2)(B_4+B_8)\\
&  \text{ \scriptsize (by \eqref{Homass:homassociator}  and \eqref{commutative:homcoloralgeb})} \\
&  \quad=(x_1\cdot_1 y_1)\cdot_1\alpha_1(z_1)\otimes\Big[\big((x_2\diamond_2 y_2)\diamond_2\alpha_2(z_2)\\
&   \quad\quad-\alpha_2(x_2)\diamond_2(y_2\diamond_2 z_2)\big)-\varepsilon(x_2,y_2)\big((y_2\diamond_2 x_2)\diamond_2\alpha_2(z_2)-\alpha_2(y_2)\diamond_2(x_2\diamond_2 z_2)\big)\Big]\\
&   \text{\scriptsize (by \eqref{eqhomNovikov1})} \quad=0,\\
&(A_2+A_7)-\varepsilon(x_1,y_1)\varepsilon(x_2,y_2)(B_2+B_7)\\
&  \text{\scriptsize (by \eqref{Homass:homassociator}, \eqref{HomNovPoisscoloralg1} and \eqref{commutative:homcoloralgeb})}\\ &\quad=\Big[\big((x_1\diamond_1 y_1)\cdot_1\alpha_1(z_1)-\alpha_1(x_1)\diamond_1(y_1\cdot_1 z_1)\big)\\
 &  \quad\quad-\varepsilon(x_1,y_1)\big((y_1\diamond_1 x_1)\cdot_1\alpha_1(z_1)-\alpha_1(y_1)\diamond_1(x_1\cdot_1 z_1)\big)\Big]\otimes(x_2\cdot_2 y_2)\diamond_2\alpha_2(z_2)\\
  & \text{\scriptsize (by \eqref{HomNovPoisscoloralg2})} \quad=0,\\
&(A_3+A_6)-\varepsilon(x_1,y_1)\varepsilon(x_2,y_2)(B_3+B_6)\\
  &\text{\scriptsize (by \eqref{Homass:homassociator}, \eqref{HomNovPoisscoloralg1} and \eqref{commutative:homcoloralgeb})} \\  &\quad=(x_1\cdot_1 y_1)\diamond_1\alpha_1(z_1)\otimes\Big[\big((x_2\diamond_2 y_2)\cdot_2\alpha_2(z_2)\\
  & \quad\quad-\alpha_2(x_2)\diamond_2(y_2\diamond_2 z_2)\big)-\varepsilon(x_2,y_2)\big((y_2\diamond_2 x_2)\cdot_2\alpha_2(z_2)-\alpha_2(y_2)\diamond_2(x_2\cdot_2 z_2)\big)\Big]\\
   &\text{\scriptsize (by \eqref{HomNovPoisscoloralg2})} \quad=0.
\end{align*}
Then, we obtain
\begin{align*}
&(x\diamond y)\diamond\alpha(z)-\alpha(x)\diamond(y\diamond z)-\varepsilon(x,y)\big((y\diamond x)\diamond\alpha(z)-\alpha(y)\diamond(x\diamond z)\big)=0.\\
&(x\diamond y)\diamond\alpha(z)-\varepsilon(y,z)\big((x\diamond z)\diamond\alpha(y)\\
    &\quad=\big((x_1\otimes x_2)\diamond(y_1\otimes y_2)\big)\diamond\alpha(z_1\otimes z_2)\\
  &\quad\quad-\varepsilon(y_1+y_2,z_1+z_2)\Big(\big((x_1\otimes x_2)\diamond(z_1\otimes z_2)\big)\diamond(\alpha_1\otimes \alpha_2)(y_1\otimes y_2)\Big)\\
  &\quad=\varepsilon(x_2,y_1)\Big((x_1\diamond_1 y_1)\otimes(x_2\cdot_2 y_2)+(x_1\cdot_1 y_1)\otimes(x_2\diamond_2 y_2)\Big)\diamond(\alpha_1\otimes\alpha_2)(z_1\otimes z_2)\\
   &\quad\quad -\varepsilon(y_1+y_2,z_1+z_2)\Big(\varepsilon(x_2,z_1)\big((x_1\diamond_1 z_1)\otimes(x_2\cdot_2 z_2)+(x_1\cdot_1 z_1)\otimes(x_2\diamond_2 z_2)\big)\diamond\\&\quad\quad\quad(\alpha_1\otimes\alpha_2)(y_1\otimes y_2)\Big)\\
    &\quad=\varepsilon(x_2,y_1)\varepsilon(x_2+y_2,z_1)\times\\
    &\Big[\underbrace{(x_1\diamond_1 y_1)\diamond_1\alpha_1(z_1)\otimes(x_2\cdot_2 y_2)\cdot_2\alpha_2(z_2)}_{C_1}\underbrace{ +
   (x_1\diamond_1 y_1)\cdot_1\alpha_1(z_1)\otimes(x_2\cdot_2 y_2)\diamond_2\alpha_2(z_2)}_{C_2}\\
   &\underbrace{ +
   (x_1\cdot_1 y_1)\diamond_1\alpha_1(z_1)\otimes(x_2\diamond_2 y_2)\cdot_2\alpha_2(z_2)}_{C_3}\underbrace{ +
   (x_1\cdot_1 y_1)\cdot_1\alpha_1(z_1)\otimes(x_2\diamond_2 y_2)\diamond_2\alpha_2(z_2)}_{C_4}\Big]\\
    &\quad\quad-\varepsilon(y_1+y_2,z_1+z_2)\varepsilon(x_2,z_1)\varepsilon(x_2+z_2,y_1)\times\\
    &\Big[\underbrace{(x_1\diamond_1 z_1)\diamond_1\alpha_1(y_1)\otimes(x_2\cdot_2 z_2)\cdot_2\alpha_2(y_2)}_{D_1}\underbrace{ +
   (x_1\diamond_1 z_1)\cdot_1\alpha_1(y_1)\otimes(x_2\cdot_2 z_2)\diamond_2\alpha_2(y_2)}_{D_2}\\
   &\underbrace{ +
   (x_1\cdot_1 z_1)\diamond_1\alpha_1(y_1)\otimes(x_2\diamond_2 z_2)\cdot_2\alpha_2(y_2)}_{D_3}\underbrace{ +
   (x_1\cdot_1 z_1)\cdot_1\alpha_1(y_1)\otimes(x_2\diamond_2 z_2)\diamond_2\alpha_2(y_2)}_{D_4}\Big].
   \end{align*}
   Furthermore, we have
   \begin{align*}
   & \varepsilon(x_2,y_1)\varepsilon(x_2+y_2,z_1)C_1-\varepsilon(y_1+y_2,z_1+z_2)\varepsilon(x_2,z_1)\varepsilon(x_2+z_2,y_1)D_1\\
  & \text{\scriptsize (by \eqref{commutative:homcoloralgeb})} \\  &\quad=\varepsilon(x_2,y_1+y_2)\varepsilon(x_2+y_2,z_1)\times\\
  &\quad\quad\big((x_1\diamond_1 y_1)\diamond_1\alpha_1(z_1)\big)\otimes(y_2\cdot_2 x_2)\cdot_2\alpha_2(z_2)\\
  &\quad\quad -\varepsilon(y_1+y_2,z_1+z_2)\varepsilon(x_2+z_2,y_1+y_2)\varepsilon(x_2,z_1)\times\\
  &\quad\quad \big((x_1\diamond_1 z_1)\diamond_1\alpha_1(y_1)\big)\otimes\alpha_2(y_2)\cdot_2(x_2\cdot_2 z_2)\\
  &\text{\scriptsize (by \eqref{Homass:homassociator})}\\
  &\quad=\varepsilon(x_2,y_1+y_2)\varepsilon(x_2+y_2,z_1)\times\\
  &\quad\quad \Big[(x_1\diamond_1 y_1)\diamond_1\alpha_1(z_1)-\varepsilon(y_1,z_1)(x_1\diamond z_1)\diamond_1\alpha_1(y_1)\Big]\otimes\alpha_2(y_2)\cdot_2(x_2\cdot_2 z_2)\\
  &\text{\scriptsize (by \eqref{eqhomNovikov2})} \quad =0.\\
      & \varepsilon(x_2,y_1)\varepsilon(x_2+y_2,z_1)C_4-\varepsilon(y_1+y_2,z_1+z_2)\varepsilon(x_2,z_1)\varepsilon(x_2+z_2,y_1)D_4\\
  &\text{\scriptsize (by \eqref{commutative:homcoloralgeb})}\\
  &\quad=\varepsilon(x_1+x_2,y_1)\varepsilon(x_2+y_2,z_1)\times\\
  &\quad\quad (y_1\cdot_1 x_1)\cdot_1\alpha_1(z_1)\otimes\big((x_2\diamond_2 y_1)\diamond_1\alpha_1(z_2)\big)\\
  &\quad\quad -\varepsilon(y_1+y_2,z_1+z_2)\varepsilon(x_2+z_2,y_1)\varepsilon(x_1+y_1,z_1)\times\\
  &\quad\quad \alpha_1(y_1)\cdot_1(x_1\cdot_2 1_2)\otimes\big((x_2\diamond_2 z_2)\diamond_2\alpha_2(y_2)\big)\\
  &\text{\scriptsize (by \eqref{Homass:homassociator})} \\
  &\quad=\varepsilon(x_1+x_2,y_1)\varepsilon(x_2+y_2,z_1)\times\\
  &\quad\quad (y_1\cdot_1 x_1)\cdot_1\alpha_1(z_1)\Big[(x_2\diamond_2 y_2)\diamond_2\alpha_2(z_2)-\varepsilon(y_2,z_2)(x_2\diamond_2 z_2)\diamond_2\alpha_2(y_2)\Big]\\
  & \text{\scriptsize (by \eqref{eqhomNovikov2})} \quad= 0.\\
  & \varepsilon(x_2,y_1)\varepsilon(x_2+y_2,z_1)C_2-\varepsilon(y_1+y_2,z_1+z_2)\varepsilon(x_2,z_1)\varepsilon(x_2+z_2,y_1)D_3
  \\
  &\text{\scriptsize(by \eqref{HomNovPoisscoloralg1})} \\
  &\quad=\varepsilon(x_2,y_1)\varepsilon(x_2+y_2,z_1)\Big[x_1\diamond_1 y_1)\cdot_1\alpha_1(z_1)\otimes(x_2\cdot_2 y_2)\diamond_2\alpha_2(z_2)\\
  &\quad\quad-(x_1\diamond_1 y_1)\cdot_1\alpha_1(z_1)\otimes(x_2\cdot_2 y_2)\diamond_2\alpha_2(z_2)\Big]=0,\\
 & \varepsilon(x_2,y_1)\varepsilon(x_2+y_2,z_1)C_3-\varepsilon(y_1+y_2,z_1+z_2)\varepsilon(x_2,z_1)\varepsilon(x_2+z_2,y_1)D_2\\
  & \text{\scriptsize (by \eqref{HomNovPoisscoloralg1})} \\
  &\quad=\varepsilon(x_2,y_1)\varepsilon(x_2+y_2,z_1)\Big[x_1\cdot_1 y_1)\diamond_1\alpha_1(z_1)\otimes(x_2\diamond_2 y_2)\cdot_2\alpha_2(z_2)\\
  &\quad\quad -(x_1\cdot_1 y_1)\diamond_1\alpha_1(z_1)\otimes(x_2\diamond_2 y_2)\cdot_2\alpha_2(z_2)\Big]=0.
\end{align*}
Then, we obtain
$$(x\diamond y)\diamond\alpha(z)-\varepsilon(y,z)\big((x\diamond z)\diamond\alpha(y)=0.$$
Hence, $(A_1\otimes A_2,\diamond,\varepsilon,\alpha)$ is a Hom-Novikov color Hom-algebra.

\underline{\bf Step 3:} We show that the compatibility conditions of Hom-Novikov-Poisson color Hom-algebras are satisfied
\begin{align*}
    &(x\cdot y)\diamond\alpha(z)-\varepsilon(y,z)(x\diamond z)\cdot\alpha(y)\\
   &\quad=\big((x_1\otimes x_2)\cdot(y_1\otimes y_2)\big) \diamond(\alpha_1\otimes\alpha_2)(z_1\otimes z_2)\\
   &\quad\quad-\varepsilon(y_1+y_2,z_1+z_2)\big((x_1\otimes x_2)\diamond(z_1\otimes z_2)\big)\cdot(\alpha_1\otimes\alpha_2)(y_1\otimes y_2)\\
   &\quad=\varepsilon(x_2,y_1)\big((x_1\cdot_1 y_1)\otimes(x_2\cdot_2 y_2)\big)\diamond(\alpha_1(z_1)\otimes\alpha_2(z_2))\\
 &\quad\quad  -\varepsilon(y_1+y_2,z_1+z_2)\varepsilon(x_2,z_1\Big((x_1\diamond_1 z_1)\otimes(x_2\diamond_2 z_2)\\
 &\quad\quad+(x_1\cdot_1 z_1)\otimes(x_2\cdot_2 z_2)\Big)\cdot(\alpha_1\otimes\alpha_2)(y_1\otimes y_2)\\
  &\quad\quad=\varepsilon(x_2,y_1)\varepsilon(x_2+y_2,z_1)\times\\&\quad\quad\Big(\underbrace{\big((x_1\cdot_1 y_1)\diamond_1\alpha_1(z_1)\big)\otimes\big((x_2\cdot_2 y_2)\cdot_2\alpha_2(z_2)\big)}_{E_1}\\
  &\quad\quad\underbrace{\big((x_1\cdot_1 y_1)\cdot_1\alpha_1(z_1)\big)\otimes\big((x_2\cdot_2 y_2)\diamond_2\alpha_2(z_2)\big)}_{E_2}\Big)\\
  &\quad\quad-\varepsilon(y_1+y_2,z_1+z_2)\varepsilon(x_2,z_1\varepsilon(x_2+z_2,y_1)\times\\
  &\quad\quad\Big(\underbrace{\big(x_1\diamond_1 z_1)\cdot_1\alpha_1(y_1)\big)\otimes\big((x_2\cdot_2 z_2)\cdot_2\alpha_2(y_2)\big)}_{F_1}\\
  &\quad\quad+\underbrace{\big((x_1\cdot_1 z_1)\cdot_1\alpha_1(y_1)\big)\otimes\big((x_2\diamond_2 z_2)\cdot_2\alpha_2(y_2)\big)}_{F_2}\Big).
\end{align*}
Furthermore, we have
\begin{align*}
&\varepsilon(x_2,y_1)\varepsilon(x_2+y_2,z_1)E_1-\varepsilon(y_1+y_2,z_1+z_2)\varepsilon(x_2,z_1)\varepsilon(x_2+z_2,y_1)F_1\\
 & \text{\scriptsize (by \eqref{commutative:homcoloralgeb})} \\
 &\quad =\varepsilon(x_2,y_1)\varepsilon(x_2+y_2,z_1)\varepsilon(x_2,y_2)\big((x_1\cdot_1 y_1)\diamond_1\alpha_1(z_1)\big)\otimes(y_2\cdot_2 x_2)\cdot_2\alpha_2(z_2)\\
&\quad-\varepsilon(y_1+y_2,z_1+z_2)\varepsilon(x_2,z_1)\varepsilon(x_2+z_2,y_1+y_2)\times\\
&\quad\quad\big((x_1\cdot_1 z_1)\cdot\alpha_1(y_1)\big)\otimes\big(\alpha_2(y_2)\cdot_2(x_2\cdot_2 z_2)\big)\\
&\text{\scriptsize (by \eqref{Homass:homassociator})}\\
 &\quad=\varepsilon(x_2,y_1+y_2)\varepsilon(x_2+y_2,z_1)\times\\
&\quad\quad \Big(
(x_1\cdot_1 y_1)\diamond_1\alpha_1(z_1)-\varepsilon(y_1,z_1)(x_1\diamond z_1)\cdot_1\alpha_1(y_1)\Big)\otimes\alpha_2(y_2)\cdot_2(x_2\cdot_2 z_2)\\
& \text{\scriptsize (by \eqref{HomNovPoisscoloralg1})} \quad=0,\\
&\varepsilon(x_2,y_1)\varepsilon(x_2+y_2,z_1)E_2-\varepsilon(y_1+y_2,z_1+z_2)\varepsilon(x_2,z_1)\varepsilon(x_2+z_2,y_1)F_2\\
& \text{\scriptsize (by \eqref{commutative:homcoloralgeb})}\\
&\quad =\varepsilon(x_2,y_1)\varepsilon(x_2+y_2,z_1)\varepsilon(x_1+y_1,z_1)\times\\
&\quad\quad \big(\alpha_1(z_1)\cdot(x_1\cdot y_1)\big)\otimes\big((x_2\cdot_2 y_2)\diamond_2\alpha_2(z_2)\big)\\&\quad-\varepsilon(y_1+y_2,z_1+z_2)\varepsilon(x_1+x_2,z_1)\varepsilon(x_2+z_2,y_1)\times\\
&\quad\quad\big((z_1\cdot_1 x_1)\cdot_1\alpha_1(y_1)\big)\otimes\big((x_2\diamond_2 z_2)\diamond_2\alpha_2(y_2)\big)\\
&\text{\scriptsize (by \eqref{Homass:homassociator})}\\
&\quad=\varepsilon(x_2,y_1)\varepsilon(x_1+x_2+y_1+y_2,z_1)\times\\
&\quad\alpha_1(z_1)\cdot_1(x_1\cdot_1 y_1)\otimes\Big((x_2\cdot_2 y_2)\diamond_2\alpha_2(z_2)
-\varepsilon(y_2,z_2)(x_2\diamond_2 z_2)\cdot_2\alpha_2(y_2)
\Big)\\
&\text{\scriptsize (by \eqref{HomNovPoisscoloralg1})} \quad=0.
\end{align*}
Then,
$(x\cdot y)\diamond\alpha(z)-\varepsilon(y,z)(x\diamond z)\cdot\alpha(y)=0.$
Similarly, $$(x\diamond y)\cdot \alpha(z) -\alpha(x)\diamond(y\cdot z) =\varepsilon(x,y)\big( (y\diamond x)\cdot\alpha(z)-\alpha(y)\diamond(x\cdot z)\big).$$
Hence, $(A_1\otimes A_2,\cdot,\diamond,\varepsilon,\alpha)$ is a Hom-Novikov-Poisson color Hom-algebra.

\underline{\bf Step 4:} We show that the Equation \eqref{eqadmissibleleftassociator} is satisfied:
\begin{align*}
    &\alpha(x)\cdot(y\diamond z)=\alpha(x_1\otimes x_2)\cdot\varepsilon(y_2,z_1)\Big((y_1\diamond_1 z_1)\otimes(y_2\cdot_2 z_2)+(y_1\cdot_1 z_1)\otimes(y_2\diamond_2 z_2)\Big)\\
    &\quad=\varepsilon(y_2,z_1)\varepsilon(x_2,y_1+z_1)\Big(\big(\alpha(x_1)\cdot_1(y_1\diamond_1 z_1)\big)\otimes\big(\alpha_2(x_2)\cdot_2(y_2\cdot_2 z_2)\big)\\
    &\quad\quad+\big(\alpha(x_1)\cdot_1(y_1\cdot_1 z_1)\big)\otimes\big(\alpha_2(x_2)\cdot_2(y_2\diamond z_2)\big)\Big),\\*[0,2cm]
   & \alpha(x)\diamond(y\cdot z)=(\alpha(x_1)\otimes \alpha_2(x_2))\diamond\big((y_1\otimes y_2)\cdot(z_1\otimes z_2)\big)\\
    &\quad=(\alpha_1(x_1)\otimes\alpha_2(x_2))\diamond\Big(\varepsilon(y_2,z_1)(y_1\diamond_1 z_1)\otimes(y_2\cdot_2 z_2)\Big)\\
        &\quad=\varepsilon(y_2,z_1)\varepsilon(x_2,y_1+z_1)\Big(\big(\alpha(x_1)\diamond_1(y_1\cdot_1 z_1)\big)\otimes\big(\alpha_2(x_2)\cdot_2(y_2\cdot_2 z_2)\big)\\
    &\quad\quad+\big(\alpha(x_1)\cdot_1(y_1\cdot_1 z_1)\big)\otimes\big(\alpha_2(x_2)\diamond_2(y_2\cdot_2 z_2)\big)\Big).
\end{align*}
Now, using the Theorem \ref{thmadmissibleleftassociator} we conclude that $\alpha(x)\cdot(y\diamond z)=\alpha(x)\diamond(y\cdot z)$. Therefore $as_A^{l}(x,y,z)=0$ and hence, $(A=A_1\otimes A_2,\cdot,\diamond,\varepsilon,\alpha)$ is an admissible Hom-Novikov-Poisson color Hom-algebra.
\end{proof}
By taking in Theorem \ref{tensor product},  $\alpha_1 =id_{A_1}$ and $\alpha_2 =id_{A_2}$, we have the following result
\begin{cor}
Let $(A_1, \cdot_1 ,\diamond_1,\varepsilon)$ and $(A_2, \cdot_2 ,\diamond_2,\varepsilon)$ be admissible Novikov-Poisson color Hom-algebras and let $A = A_1\otimes A_2$.
Define the operations $\cdot,\diamond: A\otimes A\rightarrow A$ by the following formulae for $x_i
, y_i \in \mathcal{H}(A_i),\ i\in\{1;2\}$,
\begin{eqnarray*}
   &(x_1\otimes x_2)\cdot(y_1\otimes y_2)=\varepsilon(x_2,y_1)(x_1\cdot_1 y_1)\otimes(x_2\cdot_2 y_2),\\
   &  (x_1\otimes x_2)\diamond(y_1\otimes y_2)=\varepsilon(x_2,y_1)\big((x_1\diamond_1 y_1)\otimes(x_2\cdot_2 y_2)+(x_1\cdot_1 y_1)\otimes(x_2\diamond_2 y_2)\big).
\end{eqnarray*}
Then $(A,\cdot,\diamond, \varepsilon)$ is an admissible Novikov-Poisson color Hom-algebra.
\end{cor}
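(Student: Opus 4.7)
The plan is to deduce this Corollary as an immediate specialization of Theorem \ref{tensor product}. I would simply substitute $\alpha_1=\mathrm{id}_{A_1}$ and $\alpha_2=\mathrm{id}_{A_2}$ into the hypotheses of the theorem, since an admissible Novikov-Poisson color Hom-algebra is precisely an admissible Hom-Novikov-Poisson color Hom-algebra with identity twisting map (as noted in the Remark right after Definition \ref{def Hom-Nov Poss}, item (i)). The identity maps trivially satisfy the multiplicativity requirements, so the hypotheses of Theorem \ref{tensor product} are met.

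Concretely, first I would observe that with $\alpha_i=\mathrm{id}_{A_i}$, the twisting map on the tensor product becomes $\alpha=\alpha_1\otimes\alpha_2=\mathrm{id}_{A_1\otimes A_2}$. The defining formulae for the multiplications $\cdot$ and $\diamond$ on $A$ in the Corollary are literally those of Theorem \ref{tensor product} with all occurrences of $\alpha_1$, $\alpha_2$ removed, so no further verification of these formulae is required. Applying Theorem \ref{tensor product}, I obtain that $(A,\cdot,\diamond,\varepsilon,\mathrm{id}_A)$ is an admissible Hom-Novikov-Poisson color Hom-algebra.

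Finally, I would invoke once more the observation that when the twisting map is the identity the Hom-version reduces to the classical (non-Hom) version, so $(A,\cdot,\diamond,\varepsilon)$ is an admissible Novikov-Poisson color Hom-algebra in the sense of the Corollary. There is no genuine obstacle here: the entire content is packaged inside Theorem \ref{tensor product}, and this Corollary is merely the untwisted shadow. The only thing worth being careful about is bookkeeping: checking that the definition of ``admissible'' in the untwisted case is indeed what the identity-specialization of Theorem \ref{thmadmissibleleftassociator} gives, namely the vanishing of the ordinary left associator $(x\cdot y)\diamond z-x\diamond(y\cdot z)=0$, which matches \eqref{eqadmissibleleftassociator} at $\alpha=\mathrm{id}$.
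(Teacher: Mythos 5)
Your proposal is correct and is exactly the paper's route: the paper obtains this corollary by setting $\alpha_1=\mathrm{id}_{A_1}$ and $\alpha_2=\mathrm{id}_{A_2}$ in Theorem \ref{tensor product}, with no further argument. Your extra bookkeeping remark about the identity-specialization of the admissibility condition is a harmless (and sensible) addition.
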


By taking in Theorem \ref{tensor product}, $\Gamma=\{e\}$, we recover the following result
\begin{cor}[\cite{Yau:homnovikovpoissonalg}]
Let $(A_1, \cdot_1 ,\diamond_1,\alpha_1)$ and $(A_2, \cdot_2 ,\diamond_2,\alpha_2)$ be admissible Hom-Novikov-Poisson algebras and let $A = A_1\otimes A_2$.
Define the operations $\alpha: A \rightarrow A$ and $\cdot,\diamond: A\otimes A\rightarrow A$ by the following formulae for $x_i
, y_i \in A_i,\ i\in\{1;2\}$,
\begin{eqnarray*}
    &\alpha=\alpha_1\otimes\alpha_2,\\
   &(x_1\otimes x_2)\cdot(y_1\otimes y_2)=(x_1\cdot_1 y_1)\otimes(x_2\cdot_2 y_2),\\
   &  (x_1\otimes x_2)\diamond(y_1\otimes y_2)=\big((x_1\diamond_1 y_1)\otimes(x_2\cdot_2 y_2)+(x_1\cdot_1 y_1)\otimes(x_2\diamond_2 y_2)\big).
\end{eqnarray*}
Then $(A,\cdot,\diamond,\alpha)$ is an admissible Hom-Novikov-Poisson algebra.
\end{cor}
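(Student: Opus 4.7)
The plan is to deduce this corollary directly as a specialization of Theorem \ref{tensor product} to the case of trivial grading group $\Gamma=\{e\}$. There is no need for an independent computation: everything is packaged in the theorem, and the corollary merely records what it says in the ungraded setting.

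First, I would observe that when $\Gamma$ consists of a single element, any commutation factor $\varepsilon:\Gamma\times\Gamma\rightarrow\mathbb{K}^{\ast}$ must be identically $1$, since the normalization relations $\varepsilon(a,0)=\varepsilon(0,a)=1$ derived after the definition of commutation factor force $\varepsilon(e,e)=1$. Consequently, every admissible Hom-Novikov-Poisson algebra $(A_i,\cdot_i,\diamond_i,\alpha_i)$ is canonically an admissible Hom-Novikov-Poisson color Hom-algebra over $\Gamma=\{e\}$ with bicharacter $\varepsilon\equiv 1$: its underlying linear space is concentrated in the unique degree, the multiplications are even, and the defining identities \eqref{HomNovPoisscoloralg1}--\eqref{HomNovPoisscoloralg2} and the admissibility condition \eqref{eqadmissibleleftassociator} reduce, upon setting $\varepsilon\equiv 1$, to their classical Hom-Novikov-Poisson counterparts.

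Next, I would apply Theorem \ref{tensor product} to $A_1$ and $A_2$ viewed this way. Substituting $\varepsilon(x_2,y_1)=1$ in the defining formulas for $\cdot$ and $\diamond$ on $A=A_1\otimes A_2$ given in Theorem \ref{tensor product} yields precisely the operations stated in the corollary, and the twisting map $\alpha=\alpha_1\otimes\alpha_2$ is unchanged. The theorem then guarantees that $(A,\cdot,\diamond,\varepsilon,\alpha)$ is an admissible Hom-Novikov-Poisson color Hom-algebra over $\Gamma=\{e\}$; stripping off the trivial grading and the constant bicharacter recovers exactly the data and axioms of an admissible Hom-Novikov-Poisson algebra.

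The only verification, and by no means an obstacle, is the trivial bookkeeping check that the $\varepsilon$-commutativity, Hom-Novikov identities, compatibility conditions \eqref{HomNovPoisscoloralg1}--\eqref{HomNovPoisscoloralg2}, and the admissibility criterion \eqref{eqadmissibleleftassociator} all degenerate to their ungraded Hom-algebra versions when $\varepsilon\equiv 1$. Since Theorem \ref{tensor product} already supplies the detailed calculation in the color setting, the corollary is immediate and there is no genuinely hard step.
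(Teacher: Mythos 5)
Your proposal is correct and is exactly the paper's own route: the paper derives this corollary by the single remark ``By taking in Theorem \ref{tensor product}, $\Gamma=\{e\}$, we recover the following result,'' which is precisely your specialization to the trivial grading with $\varepsilon\equiv 1$. Your additional bookkeeping about why the commutation factor must be identically $1$ and why the color identities degenerate to the ungraded ones is sound and merely makes explicit what the paper leaves implicit.
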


By taking in Theorem \ref{tensor product}, $\alpha_1 =id_{A_1},\alpha_2 =id_{A_2}$ and $\Gamma=\{e\}$, we have the following result
\begin{cor}[\cite{xu1}]
Let $(A_1, \cdot_1 ,\diamond_1)$ and $(A_2, \cdot_2 ,\diamond_2)$ be admissible Novikov-Poisson algebras and let $A = A_1\otimes A_2$.
Define the operations $\cdot,\diamond: A\otimes A\rightarrow A$ by the following formulae for $x_i
, y_i \in A_i,\ i\in\{1;2\}$,
\begin{eqnarray*}
   &(x_1\otimes x_2)\cdot(y_1\otimes y_2)=(x_1\cdot_1 y_1)\otimes(x_2\cdot_2 y_2),\\
   &  (x_1\otimes x_2)\diamond(y_1\otimes y_2)=\big((x_1\diamond_1 y_1)\otimes(x_2\cdot_2 y_2)+(x_1\cdot_1 y_1)\otimes(x_2\diamond_2 y_2)\big).
\end{eqnarray*}
Then $(A,\cdot,\diamond)$ is an admissible Novikov-Poisson algebra.
\end{cor}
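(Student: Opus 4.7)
The plan is to obtain this result as an immediate specialization of Theorem \ref{tensor product}, with no new calculation needed. First, I would point out that the hypotheses of the corollary correspond exactly to the case of Theorem \ref{tensor product} in which the grading group is trivial, $\Gamma=\{e\}$, and the two twisting maps are $\alpha_1=\mathrm{id}_{A_1}$ and $\alpha_2=\mathrm{id}_{A_2}$. In that situation, the bicharacter $\varepsilon:\Gamma\times\Gamma\rightarrow\mathbb{K}^{\ast}$ is forced to satisfy $\varepsilon(e,e)=1$ (by $\varepsilon(0,a)=1$ from the definition), so every factor of the form $\varepsilon(x_i,y_j)$ appearing in the general construction collapses to $1$. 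An admissible Novikov-Poisson algebra in the classical sense coincides with an admissible Hom-Novikov-Poisson color Hom-algebra trivially graded by $\{e\}$ and with identity twist.

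Next, I would substitute these choices into the definitions supplied in Theorem \ref{tensor product}. The tensor twist $\alpha=\alpha_1\otimes\alpha_2$ becomes $\mathrm{id}_{A_1\otimes A_2}$ and disappears from the data. The product on $A_1\otimes A_2$ given there,
\[
(x_1\otimes x_2)\cdot(y_1\otimes y_2)=\varepsilon(x_2,y_1)(x_1\cdot_1 y_1)\otimes(x_2\cdot_2 y_2),
\]
reduces to the formula in the corollary, and likewise the expression for $\diamond$ reduces to
\[
(x_1\otimes x_2)\diamond(y_1\otimes y_2)=(x_1\diamond_1 y_1)\otimes(x_2\cdot_2 y_2)+(x_1\cdot_1 y_1)\otimes(x_2\diamond_2 y_2),
\]
which is precisely the multiplication stated in the corollary.

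Finally, I would apply Theorem \ref{tensor product} to the two inputs $(A_1,\cdot_1,\diamond_1,\varepsilon_{\mathrm{triv}},\mathrm{id}_{A_1})$ and $(A_2,\cdot_2,\diamond_2,\varepsilon_{\mathrm{triv}},\mathrm{id}_{A_2})$. The theorem yields that $(A_1\otimes A_2,\cdot,\diamond,\varepsilon_{\mathrm{triv}},\mathrm{id})$ is an admissible Hom-Novikov-Poisson color Hom-algebra; by the identification in the first paragraph, this is exactly the statement that $(A,\cdot,\diamond)$ is an admissible Novikov-Poisson algebra. I do not anticipate any obstacle, since the corollary is a pure restriction of scalars: the algebraic identities \eqref{HomNovPoisscoloralg1}, \eqref{HomNovPoisscoloralg2}, and \eqref{eqadmissibleleftassociator} that were verified in the proof of Theorem \ref{tensor product} remain valid after setting the twist maps to the identity and trivializing the grading, and the corresponding proofs specialize term-by-term.
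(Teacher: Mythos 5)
Your proposal is correct and matches the paper's own treatment exactly: the paper obtains this corollary by taking $\Gamma=\{e\}$, $\alpha_1=\mathrm{id}_{A_1}$ and $\alpha_2=\mathrm{id}_{A_2}$ in Theorem \ref{tensor product}, which is precisely your specialization. Your additional remarks on the triviality of the bicharacter and the collapse of the twisting map are a correct (if slightly more explicit) justification of the same step.
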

\section{Hom-Gelfand-Dorfman color Hom-algebras}
\label{sec:homGelDorfmalgs}
In this section, our goals are to introduce Hom-Gelfand-Dorfman color Hom-algebras and to discuss some
basic properties and examples of these objects.
Moreover we
characterize the representation of Hom-Gelfand-Dorfman color Hom-algebras and provide some key constructions.
\begin{defn}\label{def of GF}
A Gelfand-Dorfman color Hom-algebra is a quadruple
$(A,\cdot,[\cdot,\cdot],\varepsilon)$ such that $(A,\cdot,\varepsilon)$ is a
Novikov color Hom-algebra and $(A,[\cdot,\cdot],\varepsilon)$ is a Lie color Hom-algebra satisfying for all $x,y,z\in \mathcal{H}(A)$, the following compatibility condition:
\begin{equation}
    \label{eq:GD}y\cdot[x,z]=\varepsilon(y,x)[x,y\cdot z]-\varepsilon(x+y,z)[z,y\cdot x]+[y,x]\cdot z-\varepsilon(x,z)[y,z]\cdot x.
  \end{equation}
\end{defn}
\begin{defn}\label{def of Hom-GF}
A Hom-Gelfand-Dorfman color Hom-algebra is defined as a quintuple
$(A,\cdot,[\cdot,\cdot],\varepsilon,\alpha)$ such that $(A,\cdot,\varepsilon,\alpha)$ is a
Hom-Novikov color Hom-algebra and $(A,[\cdot,\cdot],\varepsilon,\alpha)$ is a Hom-Lie color Hom-algebra satisfying for all $x,y,z\in \mathcal{H}(A)$, the following compatibility condition:
\begin{equation}
    \label{eq:Hom-GD}
\begin{array}{rcl}
\alpha(y)\cdot[x,z]&=&
\varepsilon(y,x)[\alpha(x),y\cdot z]-\varepsilon(x+y,z)[\alpha(z),y\cdot x]+[y,x]\cdot \alpha(z)\\
    &&\hspace{6cm} -\varepsilon(x,z)[y,z]\cdot \alpha(x).
\end{array}
\end{equation}
A Hom-Gelfand-Dorfman color Hom-algebra is called multiplicative if
the even linear map $\alpha:A\rightarrow A$ is multiplicative with respect to $\cdot$ and $[\cdot,\cdot]$, that is,
for all $x,y \in \mathcal{H}(A)$,
$$
\alpha (x \cdot y) =\alpha (x) \cdot \alpha (y), \quad \alpha ([x ,y]) =[\alpha (x) ,\alpha (y)].
$$
\end{defn}
\begin{rmk} Hom-Gelfand-Dorfman color Hom-algebras contain the Gelfand-Dorfman algebras and the Hom-Gelfand-Dorfman Hom-algebras
for special choices of grading group and the twisting map.
\begin{enumerate}
\item
When $\Gamma = \{e\}$ and $\alpha= id$, we get Gelfand-Dorfman algebra \cite{GelfandDorfman1979:Hamoperatandassociatoralgebraic,Xu:quqdrqtic conformalsuperalgbs}.
\item
When $\Gamma = \{e\}$ and $\alpha \neq id$, we get Hom-Gelfand-Dorfman Hom-algebra \cite{Hom-GelfDorf}.
 \end{enumerate}
\end{rmk}
\begin{ex}
Let $\Gamma = \mathbb{Z}_2\times\mathbb{Z}_2$ be an abelian group and $A$ be a $4$-dimensional $\Gamma$-graded
linear space with one-dimensional homogeneous subspaces
$$A_{(0,0)}=<e_1>,\quad A_{(0,1)}=<e_2>,\quad A_{(1,0)}=<e_3>,\quad A_{(1,1)}=<e_4>.$$
Then $(A,\cdot,[\cdot,\cdot],\varepsilon,\alpha)$ is a Hom-Gelfand-Dorfman color Hom-algebra with
\begin{align*}
&\text{the bicharacter:} \quad \varepsilon\big((i_1,i_2),(j_1,j_2)\big)=(-1)^{i_1j_1+i_2j_2},\\
&\text{the multiplication:} \quad e_2\cdot e_3=\lambda_1e_4,\quad e_3\cdot e_2=\lambda_2e_4,\quad e_3\cdot e_3=\lambda_3 e_1,\quad\lambda_i\in\mathbb{K},\\
&\text{the bracket:} \quad [e_2,e_2]=\mu_1e_1,\quad [e_3,e_2]=\mu_2e_4,\quad\mu_i\in\mathbb{K},\\
&\text{the even linear map $\alpha:A\rightarrow A$ given by} \quad
\begin{array}[t]{l}
\alpha(e_1)=-e_1,\quad \alpha(e_2)=2e_2,\\
\alpha(e_3)=-2e_3,\quad \alpha(e_4)=e_4.\\
\end{array}
\end{align*}
\end{ex}
\begin{defn}\label{morphHom-GelfandDorfmancoloralgebra}
Let $( A, \cdot, [\cdot,\cdot], \varepsilon,\alpha)$ and $( A', \cdot',  [\cdot,\cdot]',\varepsilon', \alpha')$ be Hom-Gelfand-Dorfman color Hom-algebras. A linear map of degree zero $f:  A\rightarrow  A'$ is a Hom-Gelfand-Dorfman color Hom-algebra morphism if
\begin{eqnarray*}
\cdot'\circ(f\otimes f)= f\circ\cdot,\quad  [\cdot,\cdot]'\circ(f\otimes f)= f\circ[\cdot,\cdot], \quad f\circ\alpha= \alpha'\circ f.
\end{eqnarray*}
\end{defn}
\begin{prop}
Let $(A,\cdot,\varepsilon,\alpha)$ be a Hom-Novikov color Hom-algebra. For all $x,y\in \mathcal{H}(A)$, let
\begin{equation}
[x,y]=x\cdot y-\varepsilon(x,y) y\cdot x.
\end{equation}
Then $(A,\cdot,[\cdot,\cdot],\varepsilon,\alpha)$ is a Hom-Gelfand-Dorfman color Hom-algebra.
\end{prop}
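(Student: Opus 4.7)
The plan is to verify the two structural requirements separately. First, $(A,[\cdot,\cdot],\varepsilon,\alpha)$ is already a Hom-Lie color Hom-algebra by Proposition \ref{Hom-Novik to Hom-Lie}, since the bracket is defined exactly as the commutator from that proposition. Second, since $(A,\cdot,\varepsilon,\alpha)$ is a Hom-Novikov color Hom-algebra by hypothesis, the only nontrivial task is to verify the compatibility identity \eqref{eq:Hom-GD}.

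The key algebraic tool will be a reformulation of \eqref{eqhomNovikov1}. Rewriting the left-symmetry of the Hom-Novikov associator, one obtains for all $u,v,w\in\mathcal{H}(A)$ the identity
\begin{equation*}
[u,v]\cdot\alpha(w)=\alpha(u)\cdot(v\cdot w)-\varepsilon(u,v)\,\alpha(v)\cdot(u\cdot w).
\end{equation*}
I would apply this to the last two terms on the right-hand side of \eqref{eq:Hom-GD}. Namely, $[y,x]\cdot\alpha(z)$ becomes $\alpha(y)\cdot(x\cdot z)-\varepsilon(y,x)\alpha(x)\cdot(y\cdot z)$, and $-\varepsilon(x,z)[y,z]\cdot\alpha(x)$ becomes $-\varepsilon(x,z)\alpha(y)\cdot(z\cdot x)+\varepsilon(x,z)\varepsilon(y,z)\alpha(z)\cdot(y\cdot x)$. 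Their sum already produces $\alpha(y)\cdot[x,z]$ together with two leftover terms involving $\alpha(x)\cdot(y\cdot z)$ and $\alpha(z)\cdot(y\cdot x)$.

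Next, I would expand the two commutators $\varepsilon(y,x)[\alpha(x),y\cdot z]$ and $-\varepsilon(x+y,z)[\alpha(z),y\cdot x]$ directly from the definition of the bracket, using the bicharacter identity $\varepsilon(y,x)\varepsilon(x,y)=1$ to simplify signs. The term $\varepsilon(y,x)\alpha(x)\cdot(y\cdot z)$ cancels the leftover $-\varepsilon(y,x)\alpha(x)\cdot(y\cdot z)$ produced above, while the two contributions involving $\alpha(z)\cdot(y\cdot x)$ combine with coefficient $\varepsilon(x,z)\varepsilon(y,z)-\varepsilon(x+y,z)$, which vanishes by the multiplicativity of $\varepsilon$ in its first argument. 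What remains is $\alpha(y)\cdot[x,z]-\varepsilon(x,z)(y\cdot z)\cdot\alpha(x)+(y\cdot x)\cdot\alpha(z)$, and the latter two terms cancel by the Hom-Novikov right-symmetry \eqref{eqhomNovikov2} applied as $(y\cdot x)\cdot\alpha(z)=\varepsilon(x,z)(y\cdot z)\cdot\alpha(x)$, yielding exactly $\alpha(y)\cdot[x,z]$.

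The main obstacle is purely bookkeeping: tracking the $\varepsilon$-signs through eight expanded summands and recognising two ``miraculous'' cancellations, one via $\varepsilon(x+y,z)=\varepsilon(x,z)\varepsilon(y,z)$ and one via \eqref{eqhomNovikov2}. No new identities beyond \eqref{eqhomNovikov1}, \eqref{eqhomNovikov2}, the bicharacter axioms, and the $\varepsilon$-skew symmetry of the bracket are needed, so the proof reduces to a careful but routine symbolic manipulation.
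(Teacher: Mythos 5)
Your proposal is correct and is essentially the paper's own argument in a slightly tidier packaging: your reformulation $[u,v]\cdot\alpha(w)=\alpha(u)\cdot(v\cdot w)-\varepsilon(u,v)\,\alpha(v)\cdot(u\cdot w)$ of \eqref{eqhomNovikov1}, applied to $[y,x]\cdot\alpha(z)$ and $[y,z]\cdot\alpha(x)$, reproduces exactly the two groups the paper cancels via \eqref{eqhomNovikov1}, and your final cancellation $(y\cdot x)\cdot\alpha(z)=\varepsilon(x,z)(y\cdot z)\cdot\alpha(x)$ is the paper's third group via \eqref{eqhomNovikov2}. The sign bookkeeping you describe (including the vanishing coefficient $\varepsilon(x,z)\varepsilon(y,z)-\varepsilon(x+y,z)$) checks out.
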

\begin{proof}
Let $(A,\cdot,\varepsilon,\alpha)$ be a Hom-Novikov color Hom-algebra. By Proposition \ref{Hom-Novik to Hom-Lie} $(A,[\cdot,\cdot],\varepsilon,\alpha)$ is a Hom-Lie color Hom-algebra. Now, we show that the compatibility condition \eqref{eq:Hom-GD} is satisfied. For any $x,y,z\in\mathcal{H}(A)$ we have
\begin{align*}
    &\alpha(y)\cdot[x,z]-\varepsilon(y,x)[\alpha(x),y\cdot z]+\varepsilon(x+y,z)[\alpha(z),y\cdot x]-[y,x]\cdot\alpha(z)\\&\quad\quad+\varepsilon(x,z)[y,z]\cdot\alpha(x)\\
    &\quad=\alpha(y)\big(x\cdot z-\varepsilon(x,y+z)(y\cdot z)\cdot\alpha(x)\big)+\varepsilon(x+y,z)\big(\alpha(z)\cdot(y\cdot x)\\&\quad\quad-\varepsilon(z,y+x)(y\cdot x)\cdot\alpha(z)\big)-\big(y\cdot x-\varepsilon(y,x) x\cdot y)\big)\cdot\alpha(z)+\varepsilon(x,z)\big(y\cdot z\\&\quad\quad-\varepsilon(y,z) z\cdot y\big)\cdot\alpha(x)\\
    &\quad=\alpha(y)\cdot(x\cdot z)-\varepsilon(x,z)\alpha(y)\cdot(z\cdot x)-\varepsilon(y,x)\alpha(x)\cdot(y\cdot z)+\varepsilon(x,z)(y\cdot z)\cdot\alpha(x)\\&\quad\quad+\varepsilon(x+y,z)\alpha(z)\cdot(y\cdot x)-(y\cdot x)\cdot\alpha(z)-(y\cdot x)\cdot\alpha(z)+\varepsilon(y,x)(x\cdot y)\cdot\alpha(z)\\&\quad\quad+\varepsilon(x,z)(y\cdot z)\cdot\alpha(x)-\varepsilon(x+y,z)(x\cdot y)\cdot\alpha(x)+\varepsilon(y,x)(x\cdot y)\cdot\alpha(z)\\
    &\quad=\underbrace{\Big(\alpha(y)\cdot(x\cdot z)-(y\cdot x)\cdot\alpha(z)-\varepsilon(y,x)\big(\alpha(x)\cdot(y\cdot z)-(x\cdot y)\cdot\alpha(z)\big)\Big)}_{\text{$=0$ by \eqref{eqhomNovikov1}}}\\
    &\quad\quad+\varepsilon(x,z)\underbrace{\Big((y\cdot z)\cdot\alpha(x)-\alpha(y)\cdot(z\cdot x)-\varepsilon(y,z)\big(\alpha(z)\cdot(y\cdot x)-(z\cdot y)\cdot\alpha(x)\Big)}_{\text{$=0$ by \eqref{eqhomNovikov1}}}\\
    &\quad\quad-\underbrace{\Big((y\cdot x)\cdot\alpha(z)-\varepsilon(x,z)(y\cdot z)\cdot\alpha(x)\Big)}_{\text{$=0$ by \eqref{eqhomNovikov2}}}=0.
\end{align*}
Hence, $(A,\cdot,[\cdot,\cdot],\varepsilon,\alpha)$ is a Hom-Gelfand-Dorfman color Hom-algebra.
\end{proof}
 \begin{defn}
Let $(A,\cdot,[\cdot,\cdot],\varepsilon,\alpha)$ be a Hom-Gelfand-Dorfman color Hom-algebra. A $\Gamma$-graded subspace $H$ of $A$ is called
\begin{enumerate}
\item
color Hom-subalgebra of $(A,\cdot,[\cdot,\cdot],\varepsilon,\alpha)$ if
$$\alpha(H)\subseteq H,\ H\cdot H \subseteq H,\ [H,H]\subseteq H,$$
\item
color Hom-ideal of $(A,\cdot,[\cdot,\cdot],\varepsilon,\alpha)$ if
$$\alpha(H)\subseteq H,\ A\cdot H\subseteq H,\ H\cdot A\subseteq H,\ [A,H]\subseteq H.$$
  \end{enumerate}
\end{defn}
\begin{prop}
Let $(A,\cdot,[\cdot,\cdot],\varepsilon,\alpha)$ a Hom-Gelfand-Dorfman color Hom-algebra and $I$ a color Hom-ideal of
$(A,\cdot,[\cdot,\cdot],\varepsilon,\alpha)$. Then $(A/ I,\overline{\cdot},\{\cdot,\cdot\},\varepsilon,\alpha)$ is a Hom-Gelfand-Dorfman color Hom-algebra where $\overline{x}~\overline{\cdot}~\overline{y} =\overline{x\cdot y}$, $\{\overline{x},\overline{y}\}=\overline{[x,y]},\overline{\alpha}(\overline{x})=\overline{\alpha(x)}$ and $\varepsilon(\overline{x},\overline{y})=\varepsilon(x,y)$, for all $\overline{x},\overline{y} \in \mathcal{H}(A/I)$.
\end{prop}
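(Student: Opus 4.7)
The plan is a standard quotient construction. I will equip $A/I$ with the inherited $\Gamma$-grading and commutation factor, check that the three operations $\overline{\cdot}$, $\{\cdot,\cdot\}$ and $\overline{\alpha}$ are well defined on cosets, and then transfer every defining identity from $A$ to $A/I$ by applying the canonical projection.

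First, I set $(A/I)_\gamma := A_\gamma / I_\gamma$, which makes sense because $I=\bigoplus_{\gamma\in\Gamma}I_\gamma$ is $\Gamma$-graded; homogeneous elements of $A/I$ are then precisely the cosets of homogeneous elements of $A$, with preserved degree, so the formula $\varepsilon(\overline{x},\overline{y})=\varepsilon(x,y)$ is unambiguous. For well-definedness of the product, suppose $x-x'\in I$ and $y-y'\in I$ with homogeneous representatives. From
\[
x\cdot y - x'\cdot y' = (x-x')\cdot y + x'\cdot(y-y'),
\]
combined with $A\cdot I\subseteq I$ and $I\cdot A\subseteq I$, one gets $x\cdot y-x'\cdot y'\in I$. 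The analogous identity for the bracket, together with $[A,I]\subseteq I$ and the $\varepsilon$-skew symmetry of $[\cdot,\cdot]$ (which yields $[I,A]\subseteq I$), shows that $\{\cdot,\cdot\}$ is well defined. Finally, $\alpha(I)\subseteq I$ gives $\alpha(x)-\alpha(x')=\alpha(x-x')\in I$, so $\overline{\alpha}$ is well defined; since $\alpha$ is even, so is $\overline{\alpha}$.

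Second, the two Hom-Novikov identities \eqref{eqhomNovikov1} and \eqref{eqhomNovikov2}, the $\varepsilon$-skew symmetry and $\varepsilon$-Hom-Jacobi identity for $[\cdot,\cdot]$, and the compatibility condition \eqref{eq:Hom-GD} are all of the schematic form $P(\cdot,[\cdot,\cdot],\alpha,\varepsilon)=0$ in $A$ for some $\varepsilon$-multilinear polynomial expression $P$. The canonical projection $\pi\colon A\rightarrow A/I$ is a graded surjection satisfying
\[
\pi(x\cdot y)=\overline{x}\,\overline{\cdot}\,\overline{y},\qquad \pi([x,y])=\{\overline{x},\overline{y}\},\qquad \pi(\alpha(x))=\overline{\alpha}(\overline{x}),
\]
so applying $\pi$ to each identity valid in $A$ yields the corresponding identity in $A/I$. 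This establishes simultaneously that $(A/I,\overline{\cdot},\varepsilon,\overline{\alpha})$ is a Hom-Novikov color Hom-algebra, that $(A/I,\{\cdot,\cdot\},\varepsilon,\overline{\alpha})$ is a Hom-Lie color Hom-algebra, and that the compatibility \eqref{eq:Hom-GD} holds in $A/I$.

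The only mild obstacle is the book-keeping of the grading in the well-definedness step: one must work with homogeneous representatives so that the transported commutation factor really matches the given definition of $\varepsilon$ on $A/I$. Once this is in place, the remainder of the proof is a purely formal transfer of identities along the surjective graded morphism $\pi$, and no axiom of a Hom-Gelfand-Dorfman color Hom-algebra needs to be reverified from scratch in the quotient.
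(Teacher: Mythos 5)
Your argument is correct and is exactly the routine verification that the paper compresses into ``It follows from a straightforward computation'': well-definedness of the induced operations from $\alpha(I)\subseteq I$, $A\cdot I\subseteq I$, $I\cdot A\subseteq I$, $[A,I]\subseteq I$ (with $[I,A]\subseteq I$ via $\varepsilon$-skew symmetry), followed by pushing each defining identity through the graded surjection $\pi$. No discrepancy with the paper's intended proof.
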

\begin{proof}
It follows from a straightforward computation.
\end{proof}
\begin{prop}
 Any transposed Hom-Poisson color Hom-algebra is a Hom-Gelfand-Dorfman color Hom-algebra.
\end{prop}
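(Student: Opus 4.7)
The plan is to verify the three conditions defining a Hom-Gelfand-Dorfman color Hom-algebra. Two of them are immediate: every $\varepsilon$-commutative Hom-associative color Hom-algebra is a Hom-Novikov color Hom-algebra (as noted in the first item of the examples of Hom-Novikov color Hom-algebras in Section~2), so $(A,\cdot,\varepsilon,\alpha)$ is Hom-Novikov, and $(A,[\cdot,\cdot],\varepsilon,\alpha)$ is Hom-Lie by the very definition of a transposed Hom-Poisson color Hom-algebra. Hence only the compatibility identity~\eqref{eq:Hom-GD} needs to be established.

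To derive \eqref{eq:Hom-GD}, I will apply the transposed Hom-$\varepsilon$-Leibniz identity~\eqref{leibniz} with cyclic relabellings of its arguments. First, substituting $(x,y,z)\mapsto(z,x,y)$ in~\eqref{leibniz}, then rewriting the left-hand side via $\varepsilon$-skew-symmetry $[z,x]=-\varepsilon(z,x)[x,z]$, and finally flipping $[y\cdot z,\alpha(x)]$ on the right by $\varepsilon$-skew-symmetry, one obtains
\begin{equation*}
2\alpha(y)\cdot[x,z]=\varepsilon(y,x)[\alpha(x),y\cdot z]-\varepsilon(x+y,z)[\alpha(z),y\cdot x].
\end{equation*}
Since the right-hand side is exactly the first two summands on the right of~\eqref{eq:Hom-GD}, proving \eqref{eq:Hom-GD} reduces to the auxiliary identity
\begin{equation*}
\alpha(y)\cdot[x,z]+[y,x]\cdot\alpha(z)-\varepsilon(x,z)[y,z]\cdot\alpha(x)=0.
\end{equation*}

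For the two remaining bracket-dot terms, I would use $\varepsilon$-commutativity of $\cdot$ to rewrite
\begin{equation*}
[y,x]\cdot\alpha(z)=\varepsilon(x+y,z)\alpha(z)\cdot[y,x],\qquad [y,z]\cdot\alpha(x)=\varepsilon(y+z,x)\alpha(x)\cdot[y,z],
\end{equation*}
and then expand each by a fresh application of~\eqref{leibniz} (with substitutions $(x,y,z)\mapsto(y,x,z)$ and $(x,y,z)\mapsto(y,z,x)$). Inserting these together with the previously derived expression for $2\alpha(y)\cdot[x,z]$, the auxiliary identity becomes a linear combination of six brackets of the shape $[\alpha(\cdot),(\cdot)\cdot(\cdot)]$ weighted by commutation factors.

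The main obstacle, as usual in the color setting, is the bookkeeping of $\varepsilon$. Using repeatedly the $\varepsilon$-skew-symmetry of $[\cdot,\cdot]$, the $\varepsilon$-commutativity of $\cdot$, and the cocycle relations $\varepsilon(a,b)\varepsilon(b,a)=1$, $\varepsilon(x+y,z)\varepsilon(z,y)=\varepsilon(x,z)$, and $\varepsilon(x,z)\varepsilon(y+z,x)=\varepsilon(y,x)$, one checks that the six terms cancel in three pairs, which finishes the proof. No structural input beyond~\eqref{leibniz} and the axioms of the $\varepsilon$-commutative Hom-associative and Hom-Lie color Hom-algebras is required.
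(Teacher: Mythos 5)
Your proof is correct, and it takes a genuinely different route from the paper's. You argue entirely from the transposed Hom-$\varepsilon$-Leibniz identity \eqref{leibniz}: one cyclic substitution shows that the first two bracket terms on the right of \eqref{eq:Hom-GD} sum to $2\alpha(y)\cdot[x,z]$, which reduces the compatibility condition to the auxiliary identity $\alpha(y)\cdot[x,z]+[y,x]\cdot\alpha(z)-\varepsilon(x,z)[y,z]\cdot\alpha(x)=0$; two further substitutions of \eqref{leibniz}, combined with $\varepsilon$-skew-symmetry, $\varepsilon$-commutativity and the bicharacter relations you list, make the resulting six terms cancel in three pairs --- I checked the bookkeeping and it goes through exactly as you describe. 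The paper's proof proceeds quite differently: it verifies \eqref{eq:Hom-GD} by replacing every bracket $[a,b]$ with the $\varepsilon$-commutator $a\cdot b-\varepsilon(a,b)\,b\cdot a$ and then cancelling via Hom-associativity \eqref{Homass:homassociator} and commutativity \eqref{commutative:homcoloralgeb}, never invoking \eqref{leibniz}. That computation presupposes that the Lie bracket of the transposed Hom-Poisson structure is the commutator of the commutative product, which is not part of the hypothesis (and for an $\varepsilon$-commutative product that commutator vanishes identically); it is essentially a transcription of the argument for commutator brackets on Hom-Novikov color Hom-algebras. Your approach costs somewhat heavier $\varepsilon$-manipulation, but it is the one that actually uses the defining identity of a transposed Hom-Poisson color Hom-algebra and therefore establishes the proposition in the stated generality.
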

\begin{proof}
Let $(A,\cdot,[\cdot,\cdot],\varepsilon,\alpha)$ be a transposed Hom-Poisson color Hom-algebra.
By definition $(A,\cdot,\varepsilon,\alpha)$ be a $\varepsilon$-commutative Hom-associative color Hom-algebra, then $(A,\cdot,\varepsilon,\alpha)$ is a Hom-Novikov color Hom-algebra and $(A,[\cdot,\cdot],\varepsilon,\alpha)$ is a Hom-Lie color Hom-algebra. Now, we show that the compatibility condition \eqref{eq:Hom-GD} is satisfied. For any $x,y,z\in\mathcal{H}(A)$ we have
\begin{align*}
    &\alpha(y)\cdot[x,z]-\varepsilon(y,x)[\alpha(x),y\cdot z]+\varepsilon(x+y,z)[\alpha(z),y\cdot x]\\
    &-[y,x]\cdot\alpha(z)+\varepsilon(x,z)[y,z]\cdot\alpha(x)\\
    &\quad=\alpha(y)\cdot\big(x\cdot z-\varepsilon(x,z) z\cdot x\big)-\varepsilon(y,x)\big(\alpha(x)\cdot(y\cdot z)-\varepsilon(x,y+z)(y\cdot z)\cdot\alpha(x)\big)\\
    &\quad\quad-\varepsilon(x+y,z)\big(\alpha(z)\cdot(y\cdot x)-\varepsilon(z,y+x)(y\cdot x)\cdot\alpha(z)\big)\\
    &\quad\quad+\big(y\cdot x-\varepsilon(y,x) x\cdot y\big)\cdot\alpha(z)-\varepsilon(x,z)\big(y\cdot z-\varepsilon(y,z)z\cdot y\big)\cdot\alpha(x)\\
    &\quad=\alpha(y)\cdot(x\cdot z)-\varepsilon(x,z)\alpha(y)\cdot(z\cdot x)-\varepsilon(y,x)\alpha(x)\cdot(y\cdot z)+\varepsilon(x,z)(y\cdot z)\cdot\alpha(x)\\
    &\quad\quad+\varepsilon(x+y,z)\alpha(z)\cdot(y\cdot x)-(y\cdot x)\cdot\alpha(z)-(y\cdot x)\cdot\alpha(z)+\varepsilon(y,x)(x\cdot y)\cdot\alpha(z)\\
    &\quad\quad+\varepsilon(x,z)(y\cdot z)\cdot\alpha(x)-\varepsilon(x+y,z)(z\cdot y)\cdot\alpha(x)\\
   & \quad=\underbrace{\big(\alpha(y)\cdot(x\cdot z)-(y\cdot x)\cdot\alpha(z)\big)}_{\text{$=0$ by \eqref{Homass:homassociator}}} -\varepsilon(x,z)\underbrace{\big(\alpha(y)\cdot(z\cdot x)-(y\cdot z)\cdot\alpha(x)\big)}_{\text{$=0$ by \eqref{Homass:homassociator}}}\\
   &\quad\quad-\varepsilon(y,x)\underbrace{\big(\alpha(x)\cdot(y\cdot z)-(x\cdot y)\cdot\alpha(z)\big)}_{\text{$=0$ by \eqref{Homass:homassociator}}}+\varepsilon(x+y,z)\underbrace{\big(\alpha(z)\cdot(y\cdot x)-(z\cdot y)\cdot\alpha(x)\big)}_{\text{$=0$ by \eqref{Homass:homassociator}}}\\&\quad\quad+\underbrace{\big(\varepsilon(x,z)(y\cdot z)\cdot\alpha(x)-(y\cdot x)\cdot\alpha(z)\big)}_{\text{$=0$ by \eqref{Homass:homassociator} and \eqref{commutative:homcoloralgeb}}}=0,
\end{align*}
which completes the proof.
\end{proof}
\begin{lem}\label{lemma commass to Nov}\cite{Bakayoko2016arXiv:Hom-Novikovcoloralgebras}
Let $(A, \cdot, \varepsilon,\alpha)$ be a $\varepsilon$-commutative Hom-associative color Hom-algebra with an even derivation $D$ such that $\alpha\circ D=D\circ\alpha$. Define
\begin{equation}\label{commass to Nov} x \diamond y = x\cdot D(y),\quad for~all~x,y\in \mathcal{H}(A). \end{equation}
Then $(A, \diamond, \varepsilon,\alpha)$ is a Hom-Novikov color Hom-algebra.
\end{lem}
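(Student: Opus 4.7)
The plan is to verify directly the two Hom-Novikov identities \eqref{eqhomNovikov1} and \eqref{eqhomNovikov2} for the new product $\diamond$, using only Hom-associativity, $\varepsilon$-commutativity of $\cdot$, the Leibniz rule for the even derivation $D$, and the commutation relation $\alpha\circ D=D\circ\alpha$. Since $D$ is even, the Leibniz rule takes the simplified form $D(u\cdot v)=D(u)\cdot v+u\cdot D(v)$ (no $\varepsilon$-sign appears), and this is what makes the computations tractable.

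First I would dispose of \eqref{eqhomNovikov2}, which is the shorter of the two. Expanding,
\begin{align*}
(x\diamond y)\diamond\alpha(z) &= (x\cdot D(y))\cdot\alpha(D(z))
= \alpha(x)\cdot\bigl(D(y)\cdot D(z)\bigr),
\end{align*}
where I used $\alpha\circ D=D\circ\alpha$ and then Hom-associativity \eqref{Homass:homassociator}. Applying $\varepsilon$-commutativity \eqref{commutative:homcoloralgeb} to $D(y)\cdot D(z)$ (noting that $D$ being even means $\overline{D(y)}=\overline{y}$ and $\overline{D(z)}=\overline{z}$) and then Hom-associativity in reverse gives $\varepsilon(y,z)(x\cdot D(z))\cdot\alpha(D(y))=\varepsilon(y,z)(x\diamond z)\diamond\alpha(y)$, as required.

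Next I would treat \eqref{eqhomNovikov1}. The key observation is that when $D$ is pushed inside, one obtains
\begin{align*}
\alpha(x)\diamond(y\diamond z)
&=\alpha(x)\cdot D\bigl(y\cdot D(z)\bigr)
=\alpha(x)\cdot\bigl(D(y)\cdot D(z)\bigr)+\alpha(x)\cdot\bigl(y\cdot D^2(z)\bigr),
\end{align*}
while $(x\diamond y)\diamond\alpha(z)=(x\cdot D(y))\cdot\alpha(D(z))=\alpha(x)\cdot(D(y)\cdot D(z))$ by Hom-associativity and $\alpha D=D\alpha$. The first summand cancels, leaving
\begin{equation*}
(x\diamond y)\diamond\alpha(z)-\alpha(x)\diamond(y\diamond z)=-\alpha(x)\cdot\bigl(y\cdot D^2(z)\bigr)=-(x\cdot y)\cdot D^2(\alpha(z)),
\end{equation*}
where Hom-associativity and $\alpha D=D\alpha$ are used once more. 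The analogous expression after swapping $x$ and $y$ is $-(y\cdot x)\cdot D^2(\alpha(z))$, so the identity \eqref{eqhomNovikov1} reduces to $(x\cdot y)\cdot D^2(\alpha(z))=\varepsilon(x,y)(y\cdot x)\cdot D^2(\alpha(z))$, which is immediate from $\varepsilon$-commutativity of $\cdot$.

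The computation is essentially bookkeeping once the right moves are identified; I expect the only mildly tricky point to be the consistent use of $\alpha\circ D=D\circ\alpha$ to shift the $D$ across $\alpha$ so that Hom-associativity can be applied. No $\varepsilon$-sign subtleties arise since $D$ is even, so the Leibniz rule introduces no twists, and every sign that appears comes from a single application of \eqref{commutative:homcoloralgeb} at the end.
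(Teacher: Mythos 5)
Your proof is correct. Note that the paper itself gives no proof of this lemma --- it is quoted from Bakayoko's paper on Hom-Novikov color algebras --- so there is no in-text argument to compare against; your direct verification of the two Hom-Novikov identities is a complete and valid substitute. The key steps all check out: for \eqref{eqhomNovikov2}, the chain $(x\diamond y)\diamond\alpha(z)=(x\cdot D(y))\cdot\alpha(D(z))=\alpha(x)\cdot(D(y)\cdot D(z))$ followed by $\varepsilon$-commutativity (with $\varepsilon(D(y),D(z))=\varepsilon(y,z)$ because $D$ is even) and Hom-associativity in reverse is exactly right; and for \eqref{eqhomNovikov1}, the cancellation of the $\alpha(x)\cdot(D(y)\cdot D(z))$ terms via the Leibniz rule, leaving $-(x\cdot y)\cdot D^2(\alpha(z))$ on each side up to the $\varepsilon(x,y)$ factor absorbed by commutativity of $\cdot$, is a clean way to organize the computation. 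The hypotheses $\alpha\circ D=D\circ\alpha$ and evenness of $D$ are each used exactly where needed, so nothing is missing.
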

\begin{thm}
Let $(A,\cdot,[\cdot,\cdot],\varepsilon,\alpha)$ be a Hom-Poisson color Hom-algebra with an even derivation $D$ relative to the
both products. Define a new operation $\diamond$ on $A$ in the following way:
\begin{equation}\label{special}
    x\diamond y=x\cdot D(y).
\end{equation}
Then $(A, \diamond, [\cdot,\cdot],\varepsilon,\alpha)$ is a Hom-Gelfand-Dorfman color Hom-algebra.
\end{thm}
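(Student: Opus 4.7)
The plan is to verify the three ingredients of a Hom-Gelfand-Dorfman color Hom-algebra for the quintuple $(A,\diamond,[\cdot,\cdot],\varepsilon,\alpha)$. First, $(A,[\cdot,\cdot],\varepsilon,\alpha)$ is a Hom-Lie color Hom-algebra by hypothesis, since it is the Lie part of the given Hom-Poisson color Hom-algebra. Second, $(A,\diamond,\varepsilon,\alpha)$ is a Hom-Novikov color Hom-algebra: this is exactly the content of Lemma \ref{lemma commass to Nov} applied to the $\varepsilon$-commutative Hom-associative component $(A,\cdot,\varepsilon,\alpha)$, using that $D$ is an even derivation commuting with $\alpha$ (which is implicit in the statement that $D$ is a derivation relative to both products of the multiplicative Hom-Poisson structure). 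So the only substantial point to verify is the Hom-Gelfand-Dorfman compatibility identity \eqref{eq:Hom-GD} with $\diamond$ replacing $\cdot$.

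The strategy for this identity is to expand every occurrence of $\diamond$ using $u\diamond v=u\cdot D(v)$, and then to reduce everything to identities that already hold in the Hom-Poisson structure. On the left-hand side, $\alpha(y)\diamond[x,z]=\alpha(y)\cdot D([x,z])$; since $D$ is an even derivation of the bracket, this splits as $\alpha(y)\cdot[D(x),z]+\alpha(y)\cdot[x,D(z)]$. On the right-hand side, the terms $[\alpha(x),y\diamond z]=[\alpha(x),y\cdot D(z)]$ and $[\alpha(z),y\diamond x]=[\alpha(z),y\cdot D(x)]$ are then expanded by the Hom-$\varepsilon$-Leibniz identity \eqref{CompatibiltyPoisson} of the Hom-Poisson color Hom-algebra, while the last two terms $[y,x]\diamond\alpha(z)=[y,x]\cdot\alpha(D(z))$ and $[y,z]\diamond\alpha(x)=[y,z]\cdot\alpha(D(x))$ (using $D\circ\alpha=\alpha\circ D$) need no further modification.

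After these expansions both sides become sums of expressions of the form (bracket)$\cdot$(two-fold product) and (product)$\cdot$(bracket), and the goal is to collect and cancel them. The key rewrites are $\varepsilon$-commutativity of $\cdot$, which allows one to move factors like $\alpha(D(z))$ past $[x,y]$ at the cost of a bicharacter, and $\varepsilon$-antisymmetry of $[\cdot,\cdot]$, which swaps $[z,x]$ with $-\varepsilon(z,x)[x,z]$. Grouping the six terms produced by the two Leibniz expansions against the two from $D([x,z])$ and the two unchanged terms, every term pairs off with exactly one other.

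The main obstacle is the bookkeeping of the bicharacter factors rather than anything conceptual. Because $D$ is even, $\overline{D(u)}=\overline{u}$, so no bicharacter is altered when $D$ is applied inside an expression; this is what makes the cancellations line up. The recurrent identities one invokes are $\varepsilon(a,b)\varepsilon(b,a)=1$ and $\varepsilon(a+b,c)=\varepsilon(a,c)\varepsilon(b,c)$, which collapse composite factors such as $\varepsilon(y,x)\varepsilon(x+y,z)\varepsilon(z,x+y)$ down to $\varepsilon(y,x)$. The calculation is structurally identical to the ungraded (and untwisted) Gelfand-Dorfman case, with the $\alpha$-twist absorbed into the Leibniz identity and the grading absorbed into $\varepsilon$; once all factors are normalized, the paired terms cancel, establishing \eqref{eq:Hom-GD} for $\diamond$ and completing the proof.
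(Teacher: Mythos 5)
Your proposal is correct and follows essentially the same route as the paper's proof: the Novikov part via Lemma \ref{lemma commass to Nov}, the Lie part by hypothesis, and the compatibility identity \eqref{eq:Hom-GD} by expanding $\diamond$ as $u\cdot D(v)$, splitting $D([x,z])$ with the derivation property, and cancelling the resulting terms via the Hom-$\varepsilon$-Leibniz identity. The paper merely packages the cancellation as two groups each vanishing by \eqref{CompatibiltyPoissonLeibform}, which is the same bookkeeping you describe.
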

\begin{proof}
By Lemma \ref{lemma commass to Nov}, $(A, \diamond,\varepsilon,\alpha)$ is a Hom-Novikov color Hom-algebra and by definition of Hom-Poisson color Hom-algebra, $(A, [\cdot,\cdot],\varepsilon,\alpha)$ is a Hom-Lie color Hom-algebra. Now, we show that the compatibility condition \eqref{eq:Hom-GD} is satisfied. For any $x,y,z\in\mathcal{H}(A)$,
\begin{align*}
&\alpha(y)\diamond[x,z]-\varepsilon(y,x)[\alpha(x),y\diamond z]+\varepsilon(x+y,z)[\alpha(z),y\diamond x]\\
&\quad \quad -[y,x]\diamond\alpha(z)+\varepsilon(x,z)[y,z]\diamond\alpha(x)\\
& \text{\scriptsize (using \eqref{commass to Nov})}\\
&\quad=\alpha(y)\cdot D([x,z])-\varepsilon(y,x)[\alpha(x),y\cdot D(z))]+\varepsilon(x+y,z)[\alpha(z),y\cdot D(x)]\\
&\quad\quad-[y,x]\cdot D(\alpha(z))+\varepsilon(x,z)[y,z]\cdot D(\alpha(x))\\
&\text{\scriptsize ($D$ is derivation)}\\
&\quad=\alpha(y)\cdot[D(x),z]+\alpha(y)\cdot[x,D(z)]-\varepsilon(y,x)[\alpha(x),y\cdot D(z)]\\&\quad\quad+\varepsilon(x+y,z)[\alpha(z),y\cdot D(x)]-[y,x]\cdot\alpha(D(x))\\&\quad\quad+\varepsilon(x,z)[y,z]\cdot\alpha(D(x))\\
&\quad=-\varepsilon(y,x)\underbrace{\Big([\alpha(x),y\cdot D(z)]-\varepsilon(x,y)\alpha(y)\cdot[x,D(z)]-\varepsilon(y+x,z)\alpha(D(x))\cdot[x,y]\Big)}_{\text{$=0$ by \eqref{CompatibiltyPoissonLeibform}}}\\*[0,5cm]
&\quad\quad+\varepsilon(x+y,z)\underbrace{\Big([\alpha(z),y\cdot D(x)]-\varepsilon(z,y)\alpha(y)\cdot[z,D(x)]-\alpha(D(x))\cdot[z,y]\Big)}_{\text{$=0$ by \eqref{CompatibiltyPoissonLeibform}}}=0.
\qedhere \end{align*}
\end{proof}
Let us call a Hom-Gelfand-Dorfman color Hom-algebra $(A,\cdot,[\cdot,\cdot],\varepsilon,\alpha)$ is special if it can be embedded into a differential Hom-Poisson
color Hom-algebra with operations $[\cdot,\cdot]$ and $\diamond$ given by \eqref{special}.
\begin{defn}
Let $(A,\cdot,[\cdot,\cdot],\varepsilon,\alpha)$ be a Hom-Gelfand-Dorfman color Hom-algebra. A representation of $(A,\cdot,[\cdot,\cdot],\varepsilon,\alpha)$ is a quintuple $(l,r,\rho,\beta,V)$ such that $(l,r,\beta,V)$ is a bimodule of the Hom-Novikov color Hom-algebra $(A,\cdot,\varepsilon,\alpha)$ and $(\rho,\beta,V)$ is a representation of the Hom-Lie color Hom-algebra  $(A,[\cdot,\cdot],\varepsilon,\alpha)$ satisfying, for all $ x, y \in  \mathcal{H}(A), v \in \mathcal{H}(V) $,
\begin{eqnarray}
\label{Cond1 GD}
l(\alpha(y))\rho(x)v=\rho(y\cdot x)\beta(v)&+&\varepsilon(y,x)\rho(\alpha(x))l(y)v\nonumber\\&-&\varepsilon(x,v)r(\alpha(x))\rho(y)v+l([y,x])\beta(v),\\
\label{Cond2 GD}
r([x,y])\beta(v)&=&\varepsilon(v,x)(\rho(\alpha(x))r(y)v-r(\alpha(y))\rho(x)v)\nonumber\\&+&\varepsilon(x+v,y)(r(\alpha(x))\rho(y)v-\rho(\alpha(y))r(x)v).
\end{eqnarray}
\end{defn}
\begin{prop}\label{pro2:repHomGelfandDorfmancolorHomalg}
Let $(A,\cdot,[\cdot,\cdot],\varepsilon,\alpha)$ be a Hom-Gelfand-Dorfman color Hom-algebra, and let $(l,r,\rho,\beta,V)$ its representation. Then, $(A\oplus V,\cdot',[\cdot,\cdot]',\varepsilon,\alpha+\beta)$ is a Hom-Gelfand-Dorfman color Hom-algebra, where $(A\oplus V,\cdot',\varepsilon,\alpha+\beta)$ is the semi-direct product Hom-Novikov color Hom-algebra $A\ltimes_{l,r,\alpha,\beta} V$, and $(A\oplus V,[\cdot,\cdot]',\varepsilon,\alpha+\beta)$ is the semi-direct product Hom-Lie color Hom-algebra $A\ltimes_{\rho,\alpha,\beta} V$.
\end{prop}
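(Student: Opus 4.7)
The plan is to verify that the semi-direct product construction yields all three axioms: the Hom-Novikov identities on $\cdot'$, the Hom-Lie color axioms on $[\cdot,\cdot]'$, and the compatibility identity \eqref{eq:Hom-GD} relating them. The first two axioms come essentially for free from results already in the paper: since $(l,r,\beta,V)$ is a bimodule of the Hom-Novikov color Hom-algebra $(A,\cdot,\varepsilon,\alpha)$, Proposition \ref{semi direct Hom Novikov} gives that $A\ltimes_{l,r,\alpha,\beta}V$ is a Hom-Novikov color Hom-algebra; and since $(\rho,\beta,V)$ is a representation of the Hom-Lie color Hom-algebra $(A,[\cdot,\cdot],\varepsilon,\alpha)$, Proposition \ref{semi direct Hom-Lie} gives that $A\ltimes_{\rho,\alpha,\beta}V$ is a Hom-Lie color Hom-algebra. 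So the only content to check is the Gelfand-Dorfman compatibility \eqref{eq:Hom-GD} on $A\oplus V$.

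For this verification, I would pick three homogeneous elements $X_i=x_i+v_i\in A_{\gamma_i}\oplus V_{\gamma_i}$ with $i=1,2,3$ and substitute them into \eqref{eq:Hom-GD}, using
\[
(x_i+v_i)\cdot'(x_j+v_j)=x_i\cdot x_j+l(x_i)v_j+r(x_j)v_i,
\]
\[
[x_i+v_i,x_j+v_j]'=[x_i,x_j]+\rho(x_i)v_j-\varepsilon(v_i,x_j)\rho(x_j)v_i,
\]
together with $(\alpha+\beta)(x_i+v_i)=\alpha(x_i)+\beta(v_i)$. After expansion, the output splits into an $A$-part and a $V$-part. The $A$-part is exactly the Hom-Gelfand-Dorfman compatibility \eqref{eq:Hom-GD} for the triple $(x_1,x_2,x_3)$ in $A$, so it vanishes by hypothesis.

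The $V$-part separates further according to which single $v_i$ appears in each term (terms with two $v_j$'s do not arise because $V\cdot V=0$ and $[V,V]'=0$ in the semi-direct structure). The coefficient of $v_3$ collects the expressions
\[
l(\alpha(x_2))\rho(x_1)v_3,\quad \rho(\alpha(x_1))l(x_2)v_3,\quad r(\alpha(x_1))\rho(x_2)v_3,\quad l([x_2,x_1])\beta(v_3),\quad \rho(x_2\cdot x_1)\beta(v_3),
\]
each with the appropriate $\varepsilon$-factor coming from Remark \ref{remarkepsilon}; these assemble (after a transposition of indices $x_1\leftrightarrow x_2$) into precisely \eqref{Cond1 GD}, so they cancel. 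The coefficient of $v_1$ collects the analogue obtained by cycling, which again reduces to \eqref{Cond1 GD}. The coefficient of $v_2$ collects
\[
r([x_1,x_3])\beta(v_2),\quad \rho(\alpha(x_1))r(x_3)v_2,\quad r(\alpha(x_3))\rho(x_1)v_2,\quad r(\alpha(x_1))\rho(x_3)v_2,\quad \rho(\alpha(x_3))r(x_1)v_2,
\]
and this is exactly the content of \eqref{Cond2 GD}. Hence the total vanishes, completing \eqref{eq:Hom-GD} in $A\oplus V$.

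The main obstacle is purely combinatorial: keeping track of the commutation factors $\varepsilon(\cdot,\cdot)$ that multiply each of the ten or so terms and matching the grouping against the two defining identities \eqref{Cond1 GD}, \eqref{Cond2 GD}. There is no conceptual difficulty, but the computation is lengthy; the strategic choice is to organize the expansion by which $v_i$ appears, apply Remark \ref{remarkepsilon} to replace $\varepsilon(x_j,v_i)$ by $\varepsilon(X_j,X_i)$ wherever needed, and then recognize the bracketed sums as \eqref{Cond1 GD} or \eqref{Cond2 GD} up to a global sign or $\varepsilon$-factor. Once those identifications are made, the proof terminates.
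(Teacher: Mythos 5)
Your proposal is correct and follows essentially the same route as the paper: the Hom-Novikov and Hom-Lie structures on $A\oplus V$ are imported from Propositions \ref{semi direct Hom Novikov} and \ref{semi direct Hom-Lie}, and the compatibility \eqref{eq:Hom-GD} is verified by expanding and grouping the $V$-part by which $v_i$ occurs, with the $v_3$- and $v_1$-blocks cancelling via \eqref{Cond1 GD} (instantiated at $(y,x)=(x_2,x_1)$ and $(x_2,x_3)$ respectively) and the $v_2$-block via \eqref{Cond2 GD}. This matches the paper's computation exactly, including the use of Remark \ref{remarkepsilon} to trade $\varepsilon(x_j,v_i)$ for $\varepsilon(x_j,x_i)$.
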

\begin{proof}
Let $(l,r,\rho,\beta,V)$ be a representation of a Hom-Gelfand-Dorfman color Hom-algebra $(A,\cdot,[\cdot,\cdot],\varepsilon,\alpha)$.
By Proposition \ref{semi direct Hom Novikov} and Proposition \ref{semi direct Hom-Lie},
$(A\oplus V,\cdot',\varepsilon,\alpha+\beta)$ is a Hom-Novikov color Hom-algebra, and
$(A\oplus V,[\cdot,\cdot]',\varepsilon,\alpha+\beta)$ is a Hom-Lie color Hom-algebra respectively.
Now, we show that the compatibility condition \eqref{eq:Hom-GD} is satisfied.
For all $X_i=x_i+v_i\in A_{\gamma_i} \oplus V_{\gamma_i},\quad i=1,2,3$ we have
\begin{align*}
&(\alpha+\beta)(x_2+v_2)\ast[x_1+v_1,x_3+v_3]'\\&-\varepsilon(x_2+v_2,x_1+v_1)[(\alpha+\beta)(x_1+v_1),(x_2+v_2)\cdot'(x_3+v_3)]'\\&+\varepsilon(x_1+x_2,x_3)[(\alpha+\beta)(x_3+v_3),(x_2+v_2)\cdot'(x_1+v_1)]'\\
&-[(x_2+v_2),(x_1+v_1)]'\cdot'(\alpha+\beta)(x_3+v_3)\\
&+\varepsilon(x_1,x_3)[(x_2+v_2),(x_3+v_3)]'\cdot'(\alpha+\beta)(x_1+v_1)\\
&\quad=(\alpha(x_2)+\beta(v_2))\cdot'([x_1,x_3]+\rho(x_1)v_3-\varepsilon(x_1,x_3)\rho(x_3)v_1)\\
&\quad\quad-\varepsilon(x_2,x_1)[\alpha(x_1)+\beta(v_1),x_2\cdot x_3+l(x_2)v_3+r(x_3)v_2]'\\
&\quad\quad+\varepsilon(x_1+x_2,x_3)[\alpha(x_3)+\beta(v_3),x_2\cdot x_1+l(x_2)v_1+r(x_1)v_2]'\\
&\quad\quad-([x_2,x_1]+\rho(x_2)v_1-\varepsilon(x_2,x_1)\rho(x_1)v_2)\cdot'(\alpha(x_3)+\beta(v_3))\\
&\quad\quad+\varepsilon(x_1,x_3)([x_2,x_3]+\rho(x_2)v_3-\varepsilon(v_2,x_3)\rho(x_3)v_2\\
&\quad=\alpha(x_2)\cdot[x_1,x_3]+l(\alpha(x_2))\rho(x_1)v_3\\
&\quad\quad-\varepsilon(x_1,x_3)l(\alpha(x_2))\rho(x_3)v_1+r([x_1,x_3])\beta(v_2)\\
&\quad\quad-\varepsilon(x_2,x_1)\Big([\alpha(x_1),x_2\cdot x_3]+\rho(\alpha(x_1))l(x_2)v_3\\
&\quad\quad+\rho(\alpha(x_1))r(x_3)v_2-\varepsilon(x_1,x_2+x_3)\rho(x_2\cdot x_3)\beta(v_1)\Big)\\
&\quad\quad+\varepsilon(x_1+x_2,x_3)\Big([\alpha(x_3),x_2\cdot x_1]+\rho(\alpha(x_3))l(x_2)v_1\\
&\quad\quad+\rho(\alpha(x_3))r(x_1)v_2-\varepsilon(v_3,x_1+x_2)\rho(x_2\cdot x_1)\beta(v_3)\Big)\\
&\quad\quad-\Big([x_2,x_1]\cdot\alpha(x_3)+l([x_2,x_1])\beta(v_3)\\
&\quad\quad+r(\alpha(x_3))\rho(x_2)v_1-\varepsilon(x_2,x_1)r(\alpha(x_3))\rho(x_1)v_2\\
&\quad\quad+\varepsilon(x_1,x_3)\Big([x_2,x_3]\cdot\alpha(x_1)+l([x_2,x_3])\beta(v_1)\\
&\quad\quad+r(\alpha(x_1))\rho(x_2)v_3-\varepsilon(x_2,x_3)r(\alpha(x_1))\rho(x_3)v_2\\
&\quad =\alpha(x_2)\cdot[x_1,x_3]+l(\alpha(x_2))\rho(x_1)v_3\\
&\quad\quad-\varepsilon(x_1,x_3)l(\alpha(x_2))\rho(x_3)v_1+r([x_1,x_3])\beta(v_2)\\
&\quad\quad-\varepsilon(x_2,x_1)[\alpha(x_1),x_2\cdot x_3]-\varepsilon(x_2,x_1)\rho(\alpha(x_1))l(x_2)v_3\\
&\quad\quad-[x_2,x_1]\cdot\alpha(x_3)-l([x_2,x_1])\beta(v_3)\\
&\quad\quad-r(\alpha(x_3))\rho(x_2)v_1+\varepsilon(x_2,x_1)r(\alpha(x_3))\rho(x_1)v_2\\
&\quad\quad+\varepsilon(x_1,x_3)[x_2,x_3]\cdot\alpha(x_1)+\varepsilon(x_1,x_3)l([x_2,x_3])\beta(v_1)\\
&\quad\quad\varepsilon(x_1,x_3)r(\alpha(x_1))\rho(x_2)v_3-\varepsilon(x_1+x_2,x_3)r(\alpha(x_1))\rho(x_3)v_2\\
&\quad=\Big(\alpha(x_2)\cdot[x_1,x_3]-\varepsilon(x_2,x_1)[\alpha(x_1),x_2\cdot x_3]\\
&\quad\quad+\varepsilon(x_1+x_2,x_3)[\alpha(x_3),x_2\cdot x_1]-[x_2,x_1]\cdot\alpha(x_3)\\
&\quad\quad+\varepsilon(x_1,x_3)[x_2,x_3]\cdot\alpha(x_1)\Big)\\
&\quad\quad+\Big(l(\alpha(x_2))\rho(x_1)v_3-\rho(x_2\cdot x_1)\beta(v_3)-\varepsilon(x_2,x_1)\rho(\alpha(x_1))l(x_2)v_3\\
&\quad\quad+\varepsilon(x_1,x_3)r(\alpha(x_1))\rho(x_2)v_3-l([x_2,x_1]\beta(v_3)\Big)\\
&\quad\quad-\varepsilon(x_1,x_3)\Big(l(\alpha(x_2))\rho(x_3)v_1-\rho(x_2\cdot x_3)\beta(v_1)\\&\quad\quad-\varepsilon(x_2,x_3)\rho(\alpha(x_3))l(x_2)v_1
+\varepsilon(x_3,x_1)r(\alpha(x_3))\rho(x_2)v_1-l([x_2,x_3])\beta(v_1)\Big)\\
&\quad\quad+\Big(r([x_1,x_3])\beta(v_2)-\varepsilon(x_2,x_1)(\rho(\alpha(x_1))r(x_3)v_2-r(\alpha(x_3))\rho(x_1)v_2)\\
&\quad\quad-\varepsilon(x_1+x_2,x_3)(r(\alpha(x_1))\rho(x_3)v-\rho(\alpha(x_3))r(x_1)v_2)\Big)=0.
\end{align*}
Hence, $(A\oplus V,\cdot',[\cdot,\cdot]',\varepsilon,\alpha+\beta)$ is a Hom-Gelfand-Dorfman color Hom-algebra.
\end{proof}
\begin{exes} Some important examples of representations of Hom-Gelfand-Dorfman color Hom-algebras can be constructed as follows.
\begin{enumerate}
\item
Let $(A,\cdot,[\cdot,\cdot],\varepsilon,\alpha)$ be a Hom-Gelfand-Dorfman color Hom-algebra. If
$$L(a)b=a\cdot b, \quad R(a)b=b\cdot a, \quad ad(a)b=[a, b]=-\varepsilon(a,b)[b,a], \quad \forall a,b\in \mathcal{H}(A),$$
then $(L,R,ad,\alpha,A)$ is a representation of $(A,\cdot,[\cdot,\cdot],\varepsilon,\alpha)$.
\item
If $f:\mathcal{A}=(A,\cdot_1,[\cdot,\cdot]_1,\varepsilon,\alpha)\rightarrow(A',\cdot_2,[\cdot,\cdot]_2,\varepsilon,\beta)$ is a Hom-Gelfand-Dorfman color Hom-algebras morphism, then
$(l,r,\rho,\beta,A')$
becomes a representation of $\mathcal{A}$ via $f$, that is, for all $(x,y)\in \mathcal{H}(A)\times \mathcal{H}(A')$,
$$l(x)y=f(x)\cdot_2 y,\quad  r(x)y=y\cdot_2 f(x), \quad \rho(x)y=[f(x), y]_2.$$
\end{enumerate}
\end{exes}
\begin{thm}
Let $\mathcal{A}=(A,\cdot_A,[\cdot,\cdot]_A,\varepsilon,\alpha)$ and $\mathcal{B}=(B,\cdot_B,[\cdot,\cdot]_{B},\varepsilon,\beta)$ be Hom-Gelfand-Dorfman color Hom-algebras. Suppose that there are such even linear maps $l_A,r_A,\rho_A:A\rightarrow End(B)$
and $l_B,r_B,\rho_B:B\rightarrow End(A)$ that $A\bowtie^{\rho_A,\beta}_{\rho_B,\alpha}B$ is a matched pair of Hom-Lie color Hom-algebras, and  $A\bowtie^{l_A,r_A,\beta}_{l_B,r_B,\alpha}B$ is a matched pair of Hom-Novikov color Hom-algebras, and for all $x,y\in \mathcal{H}(A),~a,b\in \mathcal{H}(B)$, the following equalities hold:
\begin{align}\label{101}
&\begin{array}{l}
r_A(\rho_B(a)x)\beta(b)-r_A(\alpha(x))([b,a])\\
=\varepsilon(a,x)\big(\beta(b)\cdot_B\rho_A(x)a-\rho_A(l_B(b)x)\beta(a)\\
\quad -r_A(\rho_B(b)x)\beta(a)\big)-\varepsilon(a+b,x)\big(\rho_A(\alpha(x))(b\cdot_B a)\\
\quad -\rho_A(x)b\cdot_B\beta(a) +\varepsilon(b,a)[\beta(a),r_A(x)b]_B,
\end{array} \\[0.2cm]
\label{102}
&\begin{array}{l}
l_A(\alpha(x))([a,b]_B)-\rho_A(x)a\cdot_B\beta(b)-\rho_A(r_B(a)x)\beta(b)\\
=\varepsilon(x,a)\big([\beta(a),l_A(x)b]_B-r_A(\rho_B(a)x)\beta(b)\big)\\
\quad +\varepsilon(a,b)\big(\rho_A(r_B(b)x)\beta(a)-\rho_A(x)b\cdot_B\beta(a)\big)\\
\quad +\varepsilon(a+x,b)\big(l_A(\rho_B(b)x)\beta(a)-[\beta(b),l_A(x)a]_B\big),
\end{array}
\\[0.2cm]
\label{103}
&\begin{array}{l}
r_B(\rho_A(x)a)\alpha(y)-r_B(\beta(a))([y,x])\\
=\varepsilon(x,a)\big(\alpha(y)\cdot_A\rho_B(a)x-\rho_B(l_A(y)a)\alpha(x)\\
\quad -r_B(\rho_A(y)a)\alpha(x)\big)-\varepsilon(x+y,a)\big(\rho_B(\beta(a))(y\cdot_A x)\\
\quad -\rho_B(a)y\cdot_A\alpha(x)+\varepsilon(y,x)[\alpha(x),r_B(a)y]_A,
\end{array}\\[0.2cm]
\label{104}
&\begin{array}{l}
l_B(\beta(a))([x,y]_A)-\rho_B(a)x\cdot_A\alpha(y)-\rho_B(r_A(x)a)\alpha(y)\\
=\varepsilon(a,x)\big([\alpha(x),l_B(a)y]_A-r_B(\rho_A(x)a)\beta(y)\big)\\
\quad +\varepsilon(x,y)\big(\rho_B(r_A(y)a)\alpha(x)-\rho_B(a)y\cdot_A\alpha(x)\big)\\
\quad +\varepsilon(a+x,y)\big(l_B(\rho_A(y)a)\alpha(x)-[\alpha(y),l_B(a)x]_A\big),
\end{array}
\end{align}
Then, $(A,B,l_A,r_A,\rho_A,\beta,l_B,r_B,\rho_B,\alpha)$ is called a matched pair of the Hom-Gelfand-Dorfman color Hom-algebras. In this case, on the direct sum
$A\oplus B$ of the underlying linear spaces of $\mathcal{A}$ and $\mathcal{B}$, there is a Hom-Gelfand-Dorfman color Hom-algebra structure which is given for any $x+a\in A_{\gamma_1}\oplus B_{\gamma_1}, y+b\in A_{\gamma_2}\oplus B_{\gamma_2}$ by
\begin{eqnarray}
(x + a) \cdot(y + b)&=&x \cdot_A y + (s_A(x)b + \varepsilon(a,y)s_A(y)a)\cr&+& a \cdot_B b + (s_B(a)y + \varepsilon(x,b)s_B(b)x),\\
 ~[x + a ,y + b] &=&[x ,y]_A + (\rho_A(x)b -\rho_A(y)a)\cr&+&[a , b]_B + (\rho_B(a)y -\rho_B(b)x).
\end{eqnarray}
\end{thm}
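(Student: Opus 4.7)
The plan is to reduce the theorem to a single compatibility verification on $A\oplus B$. First, I invoke the two given matched pair hypotheses to transport the required structures to the direct sum. By hypothesis, $A \bowtie^{l_A, r_A, \beta}_{l_B, r_B, \alpha} B$ is a matched pair of Hom-Novikov color Hom-algebras, so by Proposition \ref{thm:matchedpairs} the multiplication $\cdot$ displayed in the theorem statement makes $(A\oplus B, \cdot, \varepsilon, \alpha+\beta)$ into a Hom-Novikov color Hom-algebra. Similarly, $A \bowtie^{\rho_A, \beta}_{\rho_B, \alpha} B$ being a matched pair of Hom-Lie color Hom-algebras yields, via Proposition \ref{matched Lie}, that the bracket $[\cdot,\cdot]$ from the theorem statement makes $(A\oplus B, [\cdot,\cdot], \varepsilon, \alpha+\beta)$ into a Hom-Lie color Hom-algebra. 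With these two structures in hand, the only remaining task is to verify the Hom-Gelfand-Dorfman compatibility identity \eqref{eq:Hom-GD} for the pair $(\cdot, [\cdot,\cdot])$ on $A\oplus B$.

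For this, I would take homogeneous elements $X_i = x_i + a_i \in A_{\gamma_i}\oplus B_{\gamma_i}$ for $i=1,2,3$, substitute them into \eqref{eq:Hom-GD} in the roles of $x, y, z$, and expand each of the five terms using the explicit formulas for $\cdot$ and $[\cdot,\cdot]$ on $A\oplus B$. The resulting summands split along the decomposition $A\oplus B$: the summands lying purely in $A$ assemble to the Hom-Gelfand-Dorfman identity for $\mathcal{A}$ evaluated at $(x_1,x_2,x_3)$, which vanishes because $\mathcal{A}$ is itself a Hom-Gelfand-Dorfman color Hom-algebra, and likewise for the summands purely in $B$. The remaining mixed summands, each involving exactly one element from one summand and two from the other, group into four natural batches according to which of the actions $l_\bullet, r_\bullet, \rho_\bullet$ on one factor appears alongside the internal product or bracket of the other. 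These four batches correspond precisely to the four hypothesized compatibility identities \eqref{101}, \eqref{102}, \eqref{103}, \eqref{104}, each of which forces its batch to vanish. Remark \ref{remarkepsilon} is used throughout to identify $\varepsilon(x_i, a_j)$ with $\varepsilon(X_i, X_j)$ and similar ambiguities between degrees on $A$ and on $B$.

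The main obstacle is the extensive $\varepsilon$-sign bookkeeping: tracking the commutation factors through every $\varepsilon$-skew-symmetrization and every swap of variables, and then verifying that after collection the signs appearing in front of the mixed summands match exactly those on the two sides of \eqref{101}--\eqref{104}. No genuinely new algebraic identity is needed, since the four conditions \eqref{101}--\eqref{104} are engineered to be precisely the mixed components of \eqref{eq:Hom-GD} on the direct sum (the Hom-Novikov and Hom-Lie components being already encoded in the two matched pair hypotheses). Consequently, the computation proceeds entirely in parallel with the proofs of Proposition \ref{prop:matched.pairs.comm.Hom-ass.coloralgeb} and Proposition \ref{thm:matchedpairs}; it is routine but quite tedious, and the argument is best organized by listing the expansion of each of the five terms of \eqref{eq:Hom-GD} and then pairing summands by their action-type before invoking the appropriate compatibility identity.
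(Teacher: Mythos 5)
Your proposal is correct and follows essentially the same route as the paper: the paper's own proof likewise invokes Proposition \ref{thm:matchedpairs} and Proposition \ref{matched Lie} to obtain the Hom-Novikov and Hom-Lie structures on $A\oplus B$, and then simply asserts that the compatibility condition \eqref{eq:Hom-GD} is verified in the same manner as Proposition \ref{prop:matched.pairs.comm.Hom-ass.coloralgeb}. Your outline of the expansion into pure-$A$, pure-$B$, and mixed batches matched against \eqref{101}--\eqref{104} is in fact more explicit than what the paper records.
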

\begin{proof}
By Proposition \ref{thm:matchedpairs}  and Proposition \ref{matched Lie},  we deduce that $(A\oplus B,\cdot,\varepsilon,\alpha+\beta)$
is a Hom-Novikov color Hom-algebra and $(A\oplus B,[\cdot,\cdot],\alpha+\beta)$ is a Hom-Lie color Hom-algebra.
Now, the rest, it is easy (in a similar way as for Proposition \ref{prop:matched.pairs.comm.Hom-ass.coloralgeb}) to verify the compatibility condition satisfied.
\end{proof}
Taking the color $\varepsilon$-commutator in a Hom-Novikov-Poisson color Hom-algebra, we obtain the following result.
\begin{thm}\label{homNovikovPoissoncoloralg to HGD}
Let $(A,\cdot,\diamond,\varepsilon,\alpha)$ be a Hom-Novikov-Poisson color Hom-algebra, and
\begin{equation}\label{Nov-Poiss to GD}
    [x, y] = x \diamond y - \varepsilon(x,y)y \diamond x, \quad\forall x, y \in \mathcal{H}(A).
\end{equation}
Then $(A, \cdot, [\cdot,\cdot],\varepsilon,\alpha)$ is a Hom-Gelfand-Dorfman color Hom-algebra.
\end{thm}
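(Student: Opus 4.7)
The plan is to verify in turn the three ingredients of Definition \ref{def of Hom-GF}. First, since $(A,\cdot,\varepsilon,\alpha)$ is an $\varepsilon$-commutative Hom-associative color Hom-algebra by hypothesis, the first item of the Examples following Definition \ref{def:HomNovikovcoloralgeb} gives that $(A,\cdot,\varepsilon,\alpha)$ is already a Hom-Novikov color Hom-algebra, so nothing is to be checked there. Second, $(A,\diamond,\varepsilon,\alpha)$ is a Hom-Novikov color Hom-algebra by hypothesis, so Proposition \ref{Hom-Novik to Hom-Lie} applied to $\diamond$ immediately yields that $[x,y]=x\diamond y-\varepsilon(x,y)y\diamond x$ endows $A$ with a Hom-Lie color Hom-algebra structure.

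The only substantive content is therefore the compatibility identity \eqref{eq:Hom-GD}. I would proceed by substituting the definition \eqref{Nov-Poiss to GD} into each of the five bracket expressions occurring in \eqref{eq:Hom-GD} and expanding. Each side thereby becomes a signed $\varepsilon$-weighted sum of terms having one of the four shapes
\[
\alpha(u)\cdot(v\diamond w),\qquad (v\diamond w)\cdot\alpha(u),\qquad \alpha(u)\diamond(v\cdot w),\qquad (v\cdot w)\diamond\alpha(u),
\]
with $(u,v,w)$ a permutation of $(x,y,z)$. The two Hom-Novikov-Poisson compatibilities \eqref{HomNovPoisscoloralg1} and \eqref{HomNovPoisscoloralg2}, used together with $\varepsilon$-commutativity \eqref{commutative:homcoloralgeb} of $\cdot$, provide the exchange rules needed to rewrite every such term in a single normal form, which I would take to be $(v\diamond w)\cdot\alpha(u)$. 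After this normalization the claimed identity reduces to an $\varepsilon$-weighted sum which I expect to cancel in two pairs: one pair is eliminated by a direct application of \eqref{HomNovPoisscoloralg2}, while the other pair is eliminated by \eqref{HomNovPoisscoloralg1} together with $\varepsilon$-commutativity. This mirrors, in the Hom-colored setting, the computation already displayed in the proof of Proposition \ref{prop:homnovikovpoisson to transposedpoissonalgebs}, which I would use as a template.

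The main obstacle is the bookkeeping of the $\varepsilon$-factors. Each use of \eqref{HomNovPoisscoloralg1} introduces a factor $\varepsilon(v,w)$; each use of \eqref{commutative:homcoloralgeb} contributes a factor $\varepsilon(u,v)$; and the anti-symmetry of $[\cdot,\cdot]$ produces, inside each bracket, a sign $-\varepsilon(\cdot,\cdot)$ between the two summands. The final cancellation hinges on the bicharacter axioms $\varepsilon(a,b)\varepsilon(b,a)=1$ and $\varepsilon(a,b+c)=\varepsilon(a,b)\varepsilon(a,c)$, and on carefully identifying which two of the resulting eight to ten terms pair off via \eqref{HomNovPoisscoloralg1} and which via \eqref{HomNovPoisscoloralg2}. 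It is worth noting that no further axiom beyond those in Definition \ref{def Hom-Nov Poss} is needed; in particular admissibility is not assumed, so the statement covers all Hom-Novikov-Poisson color Hom-algebras rather than only admissible ones.
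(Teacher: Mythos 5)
Your proposal follows essentially the same route as the paper: both ingredients ($(A,\cdot,\varepsilon,\alpha)$ being Hom-Novikov because it is $\varepsilon$-commutative Hom-associative, and $(A,[\cdot,\cdot],\varepsilon,\alpha)$ being Hom-Lie via Proposition \ref{Hom-Novik to Hom-Lie}) are dispatched exactly as you say, and the compatibility \eqref{eq:Hom-GD} is verified by substituting \eqref{Nov-Poiss to GD}, expanding, and cancelling via \eqref{HomNovPoisscoloralg1}, \eqref{HomNovPoisscoloralg2} and $\varepsilon$-commutativity; your observation that admissibility is not needed is also correct. One caveat on the bookkeeping: your claim that every term can be brought to the single normal form $(v\diamond w)\cdot\alpha(u)$ does not literally work for the terms of shape $\alpha(u)\diamond(v\cdot w)$, since \eqref{HomNovPoisscoloralg2} is not a rewriting rule for an individual term but a four-term identity, and without admissibility there is no identity converting $\alpha(u)\diamond(v\cdot w)$ into product-first form. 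The paper instead leaves those terms alone and groups the expansion into two four-term blocks annihilated by \eqref{HomNovPoisscoloralg2} plus one two-term block annihilated by \eqref{HomNovPoisscoloralg1} (rather than "two pairs"); with that adjustment your argument goes through as written.
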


\begin{proof}
By definition $(A,\cdot,\varepsilon,\alpha)$ is a $\varepsilon$-commutative Hom-associative color Hom-algebra. Then $(A,\cdot,\varepsilon,\alpha)$ is a Hom-Novikov color Hom-algebra. Moreover, by Proposition \ref{Hom-Novik to Hom-Lie},  $(A,[\cdot,\cdot],\varepsilon,\alpha)$ is a Hom-Lie color Hom-algebra. Now, we show that the compatibility condition \eqref{eq:Hom-GD} is satisfied. For any $x,y,z\in\mathcal{H}(A)$,
\begin{align*}
    &\alpha(y)\cdot[x,z]-\varepsilon(y,x)[\alpha(x),y\cdot z]+\varepsilon(x+y,z)[\alpha(z),y\cdot x]\\
    &\quad\quad-[y,x]\cdot\alpha(z)+\varepsilon(x,z)[y,z]\cdot\alpha(x)\\
    &\text{\scriptsize (using \eqref{Nov-Poiss to GD})}\\
    &=\alpha(y)\cdot\big(x\diamond z-\varepsilon(x,z) z\diamond x\big)-\varepsilon(y,x)\big(\alpha(x)\diamond(y\cdot z)\\
    &\quad\quad-\varepsilon(x,y+z)(y\cdot z)\diamond\alpha(x)\big)+\varepsilon(x+y,z)\big(\alpha(z)\diamond(y\cdot x)\\
    &\quad\quad-\varepsilon(z,y+x)(y\cdot x^)\diamond\alpha(z)\big)-\big(y\diamond x-\varepsilon(y,x) x\diamond y\big)\cdot\alpha(z)\\
    &\quad\quad+\varepsilon(x,z)\big((y\diamond z-\varepsilon(y,z) z\diamond y)\cdot\alpha(x)\big)\\
    &=\alpha(y)\cdot(x\diamond z)-\varepsilon(x,z)\alpha(y)\cdot(z\diamond x)-\varepsilon(y,x)\alpha(x)\diamond(y\cdot z)\\&\quad\quad+\varepsilon(x,z)(y\cdot z)\diamond\alpha(x)+\varepsilon(x+y,z)\alpha(z)\diamond(y\cdot x)\\&\quad\quad-(y\cdot x)\diamond\alpha(z)-(y\diamond x)\cdot\alpha(z)+\varepsilon(y,x)(x\diamond y)\cdot\alpha(z)\\
    &\quad\quad+\varepsilon(x,z)(y\diamond z)\cdot\alpha(x)-\varepsilon(x+y,z)(z\circ y)\cdot\alpha(x)\\
    &=\varepsilon(y,x)\underbrace{\Big((x\diamond y)\cdot\alpha(z)-\alpha(x)\diamond(y\cdot z)-\varepsilon(x,y)\big((y\diamond x)\cdot\alpha(z)-\alpha(y)\cdot(x\diamond z)\big)\Big)}_{\text{$=0$ by \eqref{HomNovPoisscoloralg2}}}\\
    &-\varepsilon(x+y,z)\underbrace{\Big((z\diamond y)\cdot\alpha(x)-\alpha(z)\diamond(y\cdot x)-\varepsilon(z,y)\big((y\diamond z)\cdot\alpha(x)-\alpha(y)\diamond(z\cdot x)\big)\Big)}_{\text{$=0$ by \eqref{HomNovPoisscoloralg2}}}\\
    &\quad\quad-\underbrace{\Big((y\cdot x)\diamond\alpha(z)-\varepsilon(x,z)(y\diamond z)\cdot\alpha(x)\Big)}_{\text{$=0$ by \eqref{HomNovPoisscoloralg1}}}=0.
\qedhere
\end{align*}
\end{proof}

\begin{ex}
Let $\Gamma = \mathbb{Z}_2\times\mathbb{Z}_2$ be an abelian group and $A$ be a $4$-dimensional $\Gamma$-graded
linear space defined by
$A_{(0,0)}=<e_1>,\quad A_{(0,1)}=<e_2>,\quad A_{(1,0)}=<e_3>$ and $A_{(1,1)}=<e_4>.$
The quintuple $(A,\cdot,\diamond,\varepsilon,\alpha)$ is a Hom-Novikov-Poisson color Hom-algebra with
\begin{align*}
&\text{the bicharacter:} \quad  \varepsilon\big((i_1,i_2),(j_1,j_2)\big)=(-1)^{i_1j_1+i_2j_2},\\
&\text{the multiplication $"\cdot"$:} \quad e_2\cdot e_3=e_3\cdot e_2=\mu e_4,\quad\mu\in\mathbb{K},\\
&\text{the multiplication $"\diamond"$:} \quad  e_2\diamond e_3=\lambda_1e_4,\quad e_3\diamond e_2=\lambda_2e_4,\quad e_3\diamond e_3=\lambda_3 e_1,\quad\lambda_i\in\mathbb{K},\\
&\text{the even linear map $\alpha:A\rightarrow A$ given by:} \quad
\begin{array}[t]{ll}
\alpha(e_1)=2e_1, &\quad \alpha(e_2)=-e_2,\\
\alpha(e_3)=-e_3, & \quad \alpha(e_4)=-2e_4.
\end{array}
\end{align*}
Therefore, $(A,\cdot,[\cdot,\cdot],\varepsilon,\alpha)$ is a Hom-Gelfand-Dorfman color Hom-algebra with
$$[e_2,e_3]=-[e_3,e_2]=(\lambda_1-\lambda_2)e_4,\quad[e_3,e_3]=2\lambda_3e_3.$$
\end{ex}

\begin{lem}[\cite{attan:SomeconstructionsofcolorHom-Novikov-Poissonalgeb2019}]
\label{LemmacommutHomass to HomNovPoissonalg}
Let $(A, \cdot,\varepsilon,\alpha)$ be a $\varepsilon$-commutative Hom-associative color Hom-algebra and $D$ be an even derivation. Define
a bilinear operation $\diamond$ on $A$ by
\begin{equation}
    x\diamond y=x\cdot D(y),\quad \forall x,y\in \mathcal{H}(A).
\end{equation}
Then
    $ (A,\cdot,\diamond,\varepsilon,\alpha)$ is a Hom-Novikov-Poisson color Hom-algebras.
\end{lem}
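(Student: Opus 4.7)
The plan is to reduce the statement to the three structural conditions that define a Hom-Novikov-Poisson color Hom-algebra (Definition \ref{def Hom-Nov Poss}) and dispatch them one by one, leaning heavily on facts already established in the paper. First, $(A,\cdot,\varepsilon,\alpha)$ is $\varepsilon$-commutative Hom-associative by hypothesis, so that piece requires nothing. Second, by Lemma \ref{lemma commass to Nov} applied to $(A,\cdot,\varepsilon,\alpha)$ and the even derivation $D$ (with the compatibility $\alpha\circ D=D\circ\alpha$ inherited from that lemma's setting), the new product $x\diamond y=x\cdot D(y)$ turns $(A,\diamond,\varepsilon,\alpha)$ into a Hom-Novikov color Hom-algebra. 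So the only thing left to check is the two compatibility identities \eqref{HomNovPoisscoloralg1} and \eqref{HomNovPoisscoloralg2} linking $\cdot$ and $\diamond$.

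For \eqref{HomNovPoisscoloralg1}, I would compute both sides by hand. On one side,
\[
(x\cdot y)\diamond\alpha(z)=(x\cdot y)\cdot D(\alpha(z))=(x\cdot y)\cdot\alpha(D(z))=\alpha(x)\cdot(y\cdot D(z)),
\]
using $\alpha\circ D=D\circ\alpha$ and \eqref{Homass:homassociator}. On the other side,
\[
\varepsilon(y,z)(x\diamond z)\cdot\alpha(y)=\varepsilon(y,z)(x\cdot D(z))\cdot\alpha(y)=\varepsilon(y,z)\alpha(x)\cdot(D(z)\cdot y),
\]
again by \eqref{Homass:homassociator}, and then one more application of $\varepsilon$-commutativity \eqref{commutative:homcoloralgeb} (producing the factor $\varepsilon(z,y)$ that cancels $\varepsilon(y,z)$) gives $\alpha(x)\cdot(y\cdot D(z))$. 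So the two sides agree.

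For \eqref{HomNovPoisscoloralg2}, the key observation is the Leibniz rule $D(y\cdot z)=D(y)\cdot z+y\cdot D(z)$ (since $D$ is even). Expanding,
\begin{align*}
(x\diamond y)\cdot\alpha(z)-\alpha(x)\diamond(y\cdot z)
&=(x\cdot D(y))\cdot\alpha(z)-\alpha(x)\cdot D(y\cdot z)\\
&=\alpha(x)\cdot(D(y)\cdot z)-\alpha(x)\cdot(D(y)\cdot z)-\alpha(x)\cdot(y\cdot D(z))\\
&=-\alpha(x)\cdot(y\cdot D(z))=-(x\cdot y)\diamond\alpha(z),
\end{align*}
using Hom-associativity \eqref{Homass:homassociator} on the first and last step. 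The same calculation with $x$ and $y$ swapped shows that the right-hand side of \eqref{HomNovPoisscoloralg2} equals $-\varepsilon(x,y)(y\cdot x)\diamond\alpha(z)=-(x\cdot y)\diamond\alpha(z)$ after one use of \eqref{commutative:homcoloralgeb}. Hence both sides of \eqref{HomNovPoisscoloralg2} coincide, and $(A,\cdot,\diamond,\varepsilon,\alpha)$ is a Hom-Novikov-Poisson color Hom-algebra.

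The only mildly delicate step is tracking the $\varepsilon$-signs in \eqref{HomNovPoisscoloralg1} and \eqref{HomNovPoisscoloralg2}; since $D$ is even, it introduces no degree shifts, so the signs come purely from repositioning factors via $\varepsilon$-commutativity, and the two reflections cancel as shown. I do not expect any genuine obstacle: the calculation reduces everything to the axioms \eqref{Homass:homassociator}, \eqref{commutative:homcoloralgeb}, the derivation property of $D$, and the commutation $\alpha\circ D=D\circ\alpha$, exactly as in the Hom-Novikov half already recorded in Lemma \ref{lemma commass to Nov}.
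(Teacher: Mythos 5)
Your verification is correct, but note that the paper itself offers no argument here: Lemma \ref{LemmacommutHomass to HomNovPoissonalg} is imported verbatim from \cite{attan:SomeconstructionsofcolorHom-Novikov-Poissonalgeb2019} with no proof, so there is nothing internal to compare against. Your direct computation does what such a proof must do. The reduction to \eqref{HomNovPoisscoloralg1} and \eqref{HomNovPoisscoloralg2} is the right decomposition, the Hom-Novikov half is legitimately delegated to Lemma \ref{lemma commass to Nov}, and the sign bookkeeping is sound: since $D$ is even, $\deg D(z)=\deg z$, so the factor $\varepsilon(z,y)$ produced by commuting $D(z)$ past $y$ cancels the $\varepsilon(y,z)$ in \eqref{HomNovPoisscoloralg1}, and in \eqref{HomNovPoisscoloralg2} both sides collapse to $-(x\cdot y)\diamond\alpha(z)$ via the Leibniz rule and $\varepsilon(x,y)\varepsilon(y,x)=1$.

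One point worth making explicit rather than parenthetical: the statement of Lemma \ref{LemmacommutHomass to HomNovPoissonalg} as printed omits the hypothesis $\alpha\circ D=D\circ\alpha$ that Lemma \ref{lemma commass to Nov} carries, yet your proof (and any natural proof) genuinely needs it --- already in \eqref{HomNovPoisscoloralg1} the left side produces $(x\cdot y)\cdot D(\alpha(z))$ while the right side simplifies to $(x\cdot y)\cdot\alpha(D(z))$, and these need not agree for a general even derivation. You correctly flag that you are importing this commutation from the setting of Lemma \ref{lemma commass to Nov}; it would strengthen the write-up to state plainly that this is an additional hypothesis missing from the lemma as stated, not something that follows from the listed assumptions.
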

Combining Theorem \ref{homNovikovPoissoncoloralg to HGD} and Lemma \ref{LemmacommutHomass to HomNovPoissonalg} leads to the following corollary.
\begin{cor}
Let $(A, \cdot,\varepsilon,\alpha)$ be a $\varepsilon$-commutative Hom-associative color Hom-algebra and $D$
be an even derivation. Then $(A, \cdot, [\cdot,\cdot],\varepsilon,\alpha)$ is a Hom-Gelfand-Dorfman color Hom-algebra, where
for all $x,y\in \mathcal{H}(A)$,
\begin{equation}
    [x,y]=x\cdot D(y)-\varepsilon(x,y)y\cdot D(x).
\end{equation}
\end{cor}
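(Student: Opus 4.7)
The plan is to observe that the result is an immediate consequence of composing the two previously established results, so I will proceed by applying them in sequence.

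First, I would invoke Lemma \ref{LemmacommutHomass to HomNovPoissonalg}: since $(A,\cdot,\varepsilon,\alpha)$ is a $\varepsilon$-commutative Hom-associative color Hom-algebra and $D$ is an even derivation, defining $x\diamond y = x\cdot D(y)$ endows $(A,\cdot,\diamond,\varepsilon,\alpha)$ with the structure of a Hom-Novikov-Poisson color Hom-algebra. No additional computation is required at this step because the lemma is already stated in the form we need.

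Next, I would apply Theorem \ref{homNovikovPoissoncoloralg to HGD} to this Hom-Novikov-Poisson color Hom-algebra. That theorem guarantees that the $\varepsilon$-commutator of the Hom-Novikov product $\diamond$, together with the original commutative product $\cdot$, produces a Hom-Gelfand-Dorfman color Hom-algebra. Unwinding the definition of $\diamond$ in this commutator yields, for homogeneous $x,y \in \mathcal{H}(A)$,
\[
[x,y] = x\diamond y - \varepsilon(x,y)\, y\diamond x = x\cdot D(y) - \varepsilon(x,y)\, y\cdot D(x),
\]
which is exactly the bracket prescribed in the statement.

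Therefore $(A,\cdot,[\cdot,\cdot],\varepsilon,\alpha)$ is a Hom-Gelfand-Dorfman color Hom-algebra, and no obstacle arises: the only potential concern would be checking that the two applications are compatible (i.e., that the $\diamond$ produced by the lemma is indeed the one fed into the theorem), but this is immediate by construction. The proof thus reduces to a single sentence combining the lemma and the theorem.
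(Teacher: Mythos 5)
Your proposal is correct and is precisely the paper's own argument: the text preceding the corollary states that it follows by "combining Theorem \ref{homNovikovPoissoncoloralg to HGD} and Lemma \ref{LemmacommutHomass to HomNovPoissonalg}", which is exactly the two-step composition you describe. Nothing further is needed.
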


Next theorem provides a procedure for construction of the Hom-Gelfand-Dorfman color Hom-algebras
from Gelfand-Dorfman color Hom-algebras and morphisms.
\begin{thm}\label{twist-Gelfand-Dorfman}
Let $\mathcal{A}=(A, \cdot ,[\cdot,\cdot],\varepsilon)$ be a Gelfand-Dorfman color Hom-algebra and
$\alpha :\mathcal{A}\rightarrow \mathcal{A}$ be a Gelfand-Dorfman color Hom-algebras
morphism. Define $\cdot_{\alpha}, \ [\cdot,\cdot]_\alpha:A \times A\rightarrow A$ for all $x, y\in \mathcal{H}(A)$, by
$x\cdot _{\alpha}y =\alpha (x\cdot y)$ and $[x,y]_{\alpha} =\alpha([x,y])$.
Then, $\mathcal{A}_\alpha=(A_{\alpha}=A, \cdot _{\alpha},[x,y]_\alpha,\varepsilon, \alpha)$ is a Hom-Gelfand-Dorfman color Hom-algebra called the $\alpha$-twist or Yau twist of $(A, \cdot ,[\cdot,\cdot],\varepsilon)$.
Moreover, assume that $\mathcal{A}'=(A', \cdot',[\cdot,\cdot]',\varepsilon)$ is another Hom-Gelfand-Dorfman color Hom-algebra and $\alpha':\mathcal{A}'\rightarrow \mathcal{A}'$ is a Hom-Gelfand-Dorfman color Hom-algebras morphism. Let $f:\mathcal{A}\rightarrow \mathcal{A}'$ be a
Hom-Gelfand-Dorfman color Hom-algebras morphism satisfying $f\circ \alpha =\alpha
'\circ f$. Then, $f:\mathcal{A}_{\alpha }\rightarrow \mathcal{A}'_{\alpha}$ is a
Hom-Gelfand-Dorfman color Hom-algebras morphism.
\end{thm}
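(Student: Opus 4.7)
My plan is to reduce the verification of the Hom-Gelfand-Dorfman axioms on the twisted algebra $\mathcal{A}_\alpha$ to the Gelfand-Dorfman axioms on $\mathcal{A}$, by exploiting the Yau-twist machinery already set up in the paper. First I will observe that the underlying Novikov color algebra $(A,\cdot,\varepsilon)$ twists, by Proposition \ref{prop:twist.Hom.Nov.color.algeb}, to a Hom-Novikov color Hom-algebra $(A,\cdot_\alpha,\varepsilon,\alpha)$, and that the underlying Lie color algebra $(A,[\cdot,\cdot],\varepsilon)$ twists, by Proposition \ref{twist Lie color alg}, to a Hom-Lie color Hom-algebra $(A,[\cdot,\cdot]_\alpha,\varepsilon,\alpha)$, since $\alpha$ is by hypothesis a morphism of each underlying structure. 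This disposes of both ``factor'' structures for free, and reduces the problem to checking the compatibility identity \eqref{eq:Hom-GD} for the twisted operations.

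The main computation is that every term in \eqref{eq:Hom-GD} written with the twisted operations collapses to $\alpha^2$ applied to the corresponding term of the untwisted identity \eqref{eq:GD}. Using $x\cdot_\alpha y=\alpha(x\cdot y)$, $[x,y]_\alpha=\alpha([x,y])$ and the multiplicativity of $\alpha$ with respect to both products, one obtains for homogeneous $x,y,z$ that
\[
\alpha(y)\cdot_\alpha[x,z]_\alpha=\alpha\bigl(\alpha(y)\cdot\alpha([x,z])\bigr)=\alpha^2\bigl(y\cdot[x,z]\bigr),
\]
and analogously $[\alpha(x),y\cdot_\alpha z]_\alpha=\alpha^2([x,y\cdot z])$, $[\alpha(z),y\cdot_\alpha x]_\alpha=\alpha^2([z,y\cdot x])$, $[y,x]_\alpha\cdot_\alpha\alpha(z)=\alpha^2([y,x]\cdot z)$ and $[y,z]_\alpha\cdot_\alpha\alpha(x)=\alpha^2([y,z]\cdot x)$. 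Consequently the twisted Hom-Gelfand-Dorfman identity is precisely $\alpha^2$ applied to \eqref{eq:GD}, hence vanishes. The bicharacter signs transfer without change because $\alpha$ is even of degree zero and therefore preserves the grading of its arguments.

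For the ``moreover'' statement, I will apply the same Yau-twist procedure on both sides to obtain $\mathcal{A}_\alpha$ and $\mathcal{A}'_{\alpha'}$, and then check that $f$ remains a morphism for the twisted operations. Using the intertwining hypothesis $f\circ\alpha=\alpha'\circ f$ together with the fact that $f$ is a morphism for the original products, one computes
\[
f(x\cdot_\alpha y)=f(\alpha(x\cdot y))=\alpha'(f(x\cdot y))=\alpha'(f(x)\cdot' f(y))=f(x)\cdot'_{\alpha'}f(y),
\]
and a parallel computation yields $f([x,y]_\alpha)=[f(x),f(y)]'_{\alpha'}$. Together with the given intertwining with $\alpha,\alpha'$, this exhibits $f$ as a Hom-Gelfand-Dorfman color Hom-algebra morphism $\mathcal{A}_\alpha\to\mathcal{A}'_{\alpha'}$.

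I do not foresee any genuine obstacle: the argument is a formal consequence of the two Yau-twist propositions already proved for the Hom-Novikov and Hom-Lie color Hom-algebra components, together with the observation that both sides of \eqref{eq:Hom-GD} on $\mathcal{A}_\alpha$ are obtained by applying $\alpha^2$ to the two sides of \eqref{eq:GD} on $\mathcal{A}$. The only bookkeeping point worth care is that the hypothesis ``$\alpha$ is a Gelfand-Dorfman morphism'' is used in its full strength, namely multiplicativity with respect to \emph{both} $\cdot$ and $[\cdot,\cdot]$ simultaneously; dropping either one would break the collapse to $\alpha^2$ of a single expression.
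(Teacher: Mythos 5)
Your proposal is correct and follows essentially the same route as the paper: the factor structures are handled by the existing Yau-twist propositions for Hom-Novikov and Hom-Lie color Hom-algebras, and the compatibility identity is reduced to the untwisted one via multiplicativity of $\alpha$ (the paper applies \eqref{eq:GD} to the elements $\alpha^2(x),\alpha^2(y),\alpha^2(z)$, which is trivially equivalent to your applying $\alpha^2$ to \eqref{eq:GD}), with the morphism part verified by the same two-line computation using $f\circ\alpha=\alpha'\circ f$.
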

\begin{proof}
Being a Gelfand-Dorfman color Hom-algebras
morphism, $\alpha:A\rightarrow A$ is an even linear map which is multiplicative with respect to $\cdot$ and $[\cdot,\cdot]$, that is,
$$ \forall\ x,y \in\mathcal{H}(A): \quad
\alpha (x \cdot y) =\alpha (x) \cdot \alpha (y), \quad \alpha ([x , y]) =[\alpha (x) , \alpha (y)].
$$
The equality \eqref{eq:Hom-GD} in $\mathcal{A}_{\alpha}$
is proved as follows:
\begin{align*}
&\alpha(y)\cdot_\alpha[x,z]_\alpha=\alpha(y)\cdot_\alpha\alpha([x,z])=\alpha(\alpha(y)\cdot\alpha([x,z]))\\
&
\text{\scriptsize ($\alpha$ morphism)} \\
&=\alpha^{2}(y)\cdot\alpha^{2}([x,z])
=\alpha^{2}(y)\cdot[\alpha^{2}(x),\alpha^{2}(z)]\\
&\text{\scriptsize ($\mathcal{A}$ is a Hom-G. D. color alg)} \\ &=\varepsilon(y,x)[\alpha^{2}(x),\alpha^{2}(y)\cdot\alpha^{2}(z)]
-\varepsilon(x+y,z)[\alpha^{2}(z),\alpha^{2}(y)\cdot\alpha^{2}(x)]\\
&\quad+[\alpha^{2}(y),\alpha^{2}(x)]\cdot\alpha^{2}(z)-\varepsilon(x,z)[\alpha^{2}(y),\alpha^{2}(z)]\cdot\alpha^{2}(x)\\
&\text{\scriptsize ($\alpha$ morphism)}\\
&=\varepsilon(y,x)[\alpha^{2}(x),\alpha(\alpha(y)\cdot\alpha(z))]-\varepsilon(x+y,z)[\alpha^{2}(z),\alpha(\alpha(y),\alpha(x))]
\\
&\quad+\alpha([\alpha(y),\alpha(x)])\cdot\alpha^{2}(z)-\varepsilon(x,z)\alpha([\alpha(y),\alpha(z)]\cdot\alpha^{2}(x)\\
&=\varepsilon(y,x)[\alpha^{2}(x),\alpha(y\cdot_\alpha z)]-\varepsilon(x+y,z)[\alpha^{2}(z),\alpha(y\cdot_\alpha x)]\\
&\quad+\alpha([y,x]_\alpha\cdot\alpha^{2}(z)-\varepsilon(x,z)\alpha([y,z]_\alpha)\cdot\alpha^{2}(x)\\
&=\varepsilon(y,x)[\alpha(x),y\cdot_\alpha z]_\alpha-\varepsilon(x+y,z)[\alpha(z),y\cdot_\alpha x]_\alpha\\
&\quad+[y,x]_\alpha\cdot_\alpha\alpha(z)-\varepsilon(x,z)[y,z]_\alpha\cdot_\alpha \alpha(x).
\end{align*}
The second assertion follows from
\begin{align*}
    f(x\cdot_{\alpha} y)&=f(\alpha(x\cdot y))
    =\alpha'( f(x\cdot y)) = \alpha' (f(x) \cdot' f(y))
    =f(x)\cdot'_{\alpha'} f(y), \\
    f([x,y]_{\alpha})&=f(\alpha([x, y]))
    =\alpha' (f([x,y])) = \alpha' ([f(x) , f(y)]')
    =[f(x),f(y)]'_\alpha.
\qedhere
\end{align*}
\end{proof}
\begin{cor}
If $\mathcal{A}=(A, \cdot,[\cdot,\cdot],\varepsilon,\alpha)$ is a multiplicative Hom-Gelfand-Dorfman color
algebra, then for any $n\in\mathbb{N}^{\ast}$,
\begin{enumerate}
\item
The $n{\rm th}$ derived Hom-Gelfand-Dorfman color Hom-algebra of type $1$ of $\mathcal{A}$ is
defined by
$$\mathcal{A}_{1}^{n}=(A,\cdot^{(n)}=\alpha^{n}\circ\cdot,\ast^{(n)}=\alpha^{n}\circ[\cdot,\cdot],\varepsilon,\alpha^{n+1}).$$
\item
The $n{\rm th}$ derived Hom-Gelfand-Dorfman color Hom-algebra of type $2$ of $A$ is
defined by
$$\mathcal{A}_{2}^{n}=(A,\cdot^{(2^n-1)}=\alpha^{2^n-1}\circ\cdot,[\cdot,\cdot]^{(2^n-1)}=\alpha^{2^n-1}\circ[\cdot,\cdot],\varepsilon,\alpha^{2^n}).$$
\end{enumerate}
\end{cor}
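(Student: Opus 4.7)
The plan is to verify directly, and in parallel for both types, that each defining axiom of a Hom-Gelfand-Dorfman color Hom-algebra holds for the derived structure, reducing each check to the corresponding axiom of the original $\mathcal{A}$ by means of multiplicativity. The first step is the preparatory observation that since $\alpha$ is multiplicative with respect to $\cdot$ and $[\cdot,\cdot]$, an immediate induction on $k$ gives $\alpha^{k}(x\cdot y)=\alpha^{k}(x)\cdot\alpha^{k}(y)$ and $\alpha^{k}([x,y])=[\alpha^{k}(x),\alpha^{k}(y)]$ for every $k\in\mathbb{N}$. The bicharacter $\varepsilon$ depends only on the $\Gamma$-degrees, which are preserved by the even map $\alpha^{k}$, so it is unaffected by the twisting.

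For type 1, set $x\cdot^{(n)}y=\alpha^{n}(x\cdot y)$, $x\ast^{(n)}y=\alpha^{n}([x,y])$ and twist $\alpha^{n+1}$. Each term in any of the defining axioms of $\mathcal{A}_{1}^{n}$ is, using the identities above, a composition of two applications of $\alpha^{n}$ and one of $\alpha^{n+1}$; exploiting multiplicativity and commuting the outer $\alpha^{n}$ past the products yields that the full expression equals $\alpha^{2n}$ applied to the corresponding expression formed with $\cdot$, $[\cdot,\cdot]$ and $\alpha$ in $\mathcal{A}$. Thus, for instance,
\begin{align*}
(x\cdot^{(n)}y)\cdot^{(n)}\alpha^{n+1}(z)-\alpha^{n+1}(x)\cdot^{(n)}(y\cdot^{(n)}z)
&=\alpha^{2n}\bigl((x\cdot y)\cdot\alpha(z)-\alpha(x)\cdot(y\cdot z)\bigr),
\end{align*}
and the analogous reduction applies to the second Hom-Novikov identity \eqref{eqhomNovikov2}, the $\varepsilon$-skew symmetry, the $\varepsilon$-Hom-Jacobi identity, and the Hom-Gelfand-Dorfman compatibility \eqref{eq:Hom-GD}. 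Since $\mathcal{A}$ satisfies each of these identities, the right-hand side vanishes after application of the linear map $\alpha^{2n}$, so all axioms hold in $\mathcal{A}_{1}^{n}$. Multiplicativity of $\alpha^{n+1}$ with respect to $\cdot^{(n)}$ and $\ast^{(n)}$ is immediate from the multiplicativity of $\alpha$.

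For type 2, the argument is identical with the exponents $2^{n}-1$ and $2^{n}$ in place of $n$ and $n+1$; the scaling factor at the end of each reduction becomes $\alpha^{2(2^{n}-1)}$, which again annihilates the axiom residues inherited from $\mathcal{A}$. One may alternatively phrase this inductively: if $\mathcal{A}_{2}^{n}$ is a multiplicative Hom-Gelfand-Dorfman color Hom-algebra with twist $\alpha^{2^{n}}$, then applying the same "double the exponent in products, square the twist" step to $\mathcal{A}_{2}^{n}$ produces $\mathcal{A}_{2}^{n+1}$, since $2(2^{n}-1)+1=2^{n+1}-1$ and $2\cdot 2^{n}=2^{n+1}$.

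The only potential obstacle is the bookkeeping of exponents and of the twist positions in the various $\varepsilon$-Hom-Jacobi and compatibility terms; however, because $\varepsilon$ is twist-invariant and $\alpha^{k}$ is even, these manipulations are entirely mechanical once the key identity $\alpha^{k}(xy)=\alpha^{k}(x)\alpha^{k}(y)$ (for both products) is invoked. No genuine conceptual issue arises, and the corollary follows as a direct application of the preceding Yau-type twist construction of Theorem \ref{twist-Gelfand-Dorfman} iterated with $\alpha^{n}$, respectively $\alpha^{2^{n}-1}$, in place of $\alpha$.
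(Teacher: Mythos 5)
Your proof is correct and follows essentially the same route as the paper, whose entire proof consists of applying Theorem \ref{twist-Gelfand-Dorfman} with $\alpha'=\alpha^{n}$ and $\alpha'=\alpha^{2^{n}-1}$ respectively. Your explicit reduction of each axiom to $\alpha^{2n}$ (resp.\ $\alpha^{2(2^{n}-1)}$) applied to the corresponding identity in $\mathcal{A}$ is in fact slightly more careful than the paper's one-line argument, since Theorem \ref{twist-Gelfand-Dorfman} as literally stated twists an untwisted Gelfand-Dorfman color algebra, whereas here the base object already carries the twist $\alpha$.
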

\begin{proof}
Apply Theorem \ref{twist-Gelfand-Dorfman} with $\alpha'=\alpha^{n}$ and
$\alpha'=\alpha^{2^n-1}$ respectively.
\end{proof}
\begin{ex} Let $\Gamma = \mathbb{Z}_2\times\mathbb{Z}_2$ and $A$ be a $4$-dimensional $\Gamma$-graded
linear space with $A_{(0,0)}=<e_1>, A_{(0,1)}=<e_2>, A_{(1,0)}=<e_3>, A_{(1,1)}=<e_4>.$
Then there is a multiplicative admissible Hom-Gelfand-Dorfman color Hom-algebra $(A, \cdot,[\cdot,\cdot],\varepsilon,\alpha)$
with the bicharacter $\varepsilon\big((i_1,i_2),(j_1,j_2)\big)=(-1)^{i_1j_1+i_2j_2},$ and the multiplications tables for a basis $\{e_1, e_2, e_3,e_4\}$:
\begin{center}
\begin{tabular}{c|cccc}
$\cdot$ & $e_1$ & $e_2$ & $e_3$ & $e_4$ \\ \hline
$e_1$ & $0$ & $0$ & $0$ & $0$ \\
$e_2$ & $0$ & $0$ & $2e_4$ & $0$ \\
$e_3$ & $0$ & $2e_4$ & $e_1$& $0$ \\
$e_4$ & $0$ & $0$ & $0$& $0$ \\
\end{tabular}
\hspace{1 cm}
\begin{tabular}{c|cccc}
$[\cdot,\cdot]$ & $e_1$ & $e_2$ & $e_3$ & $e_4$ \\ \hline
$e_1$ & $0$ & $0$ & $0$ & $0$ \\
$e_2$ & $0$ & $0$ & $-2e_4$ & $0$ \\
$e_3$ & $0$ & $2e_4$ & $0$& $0$ \\
$e_4$ & $0$ & $0$ & $0$& $0$ \\
\end{tabular}
\end{center}
$$\begin{array}{ll}
    \alpha(e_1)=e_1, &\alpha(e_2)=-2e_2,\\
    \alpha(e_3)=-e_3,&\alpha(e_4)=2e_4,
\end{array}$$
Then there are Hom-Gelfand-Dorfman color Hom-algebras $\mathcal{A}_{1}^{n}$ and $\mathcal{A}_{2}^{n}$ with multiplications
tables respectively:
\begin{center}
\begin{tabular}{c|cccc}
$\cdot^{(n)}$ & $e_1$ & $e_2$ & $e_3$ & $e_4$ \\ \hline
$e_1$ & $0$ & $0$ & $0$ & $0$ \\
$e_2$ & $0$ & $0$ & $2^ne_4$ & $0$ \\
$e_3$ & $0$ & $2^ne_4$ & $e_1$& $0$ \\
$e_4$ & $0$ & $0$ & $0$& $0$ \\
\end{tabular}
\hspace{1 cm}
\begin{tabular}{c|cccc}
$[\cdot,\cdot]^{(n)}$ & $e_1$ & $e_2$ & $e_3$ & $e_4$ \\ \hline
$e_1$ & $0$ & $0$ & $0$ & $0$ \\
$e_2$ & $0$ & $0$ & $(-2)^ne_4$ & $0$ \\
$e_3$ & $0$ & $2^ne_4$ & $0$& $0$ \\
$e_4$ & $0$ & $0$ & $0$& $0$
\end{tabular}
\end{center}
$$\begin{array}{ll}
    \alpha^{n+1}(e_1)=e_1, &\alpha^{n+1}(e_2)=(-2)^{n+1}e_2,\\
    \alpha^{n+1}(e_3)=(-1)^{n+1}e_3,&\alpha^{n+1}(e_4)=2^{n+1}e_4,
\end{array}$$
\vspace{0,1 cm}
\begin{center}
\begin{tabular}{c|cccc}
$\cdot^{(2^n-1)}$ & $e_1$ & $e_2$ & $e_3$ & $e_4$ \\ \hline
$e_1$ & $0$ & $0$ & $0$ & $0$ \\
$e_2$ & $0$ & $0$ & $2^{2^{n}-1}e_4$ & $0$ \\
$e_3$ & $0$ & $2^{2^{n}-1}e_4$ & $e_1$& $0$ \\
$e_4$ & $0$ & $0$ & $0$& $0$
\end{tabular}
\vspace{1 cm}
\begin{tabular}{c|cccc}
$[\cdot,\cdot]^{(2^n-1)}$ & $e_1$ & $e_2$ & $e_3$ & $e_4$ \\ \hline
$e_1$ & $0$ & $0$ & $0$ & $0$ \\
$e_2$ & $0$ & $0$ & $-2^{2^{n}-1}e_4$ & $0$ \\
$e_3$ & $0$ & $2^{2^{n}-1}e_4$ & $0$& $0$ \\
$e_4$ & $0$ & $0$ & $0$& $0$
\end{tabular}
\end{center}
$$
    \alpha^{2^n}(e_1)=e_1, \quad \alpha^{2^n}(e_2)=2^{2^n}e_2, \quad \alpha^{2^n}(e_3)=e_3,\quad \alpha^{2^n}(e_4)=2^{2^n}e_4.
$$
\end{ex}

\end{document}